   \def\MR#1{}
\tikzset{
>=stealth',
coi/.style={circle, thick, draw=black, fill=white, inner sep=1.5pt, label=center:\Large{$\cdot$}},
coii/.style={circle, thick, draw=black, fill=white, inner sep=1.5pt},
generic/.style={circle, fill, inner sep=1.5pt}}
\theoremstyle{plain}
\newtheorem{theorem}{Theorem}[section]
\newtheorem{lemma}[theorem]{Lemma}
\newtheorem{corollary}[theorem]{Corollary}
\newtheorem{proposition}[theorem]{Proposition}
\theoremstyle{definition}
\newtheorem{definition}[theorem]{Definition}
\newtheorem{remark}[theorem]{Remark}
\newtheorem{example}[theorem]{Example}
\newtheorem{problem}[theorem]{Problem}
\numberwithin{equation}{section}
\newcommand{\M}{\mathbf{M}}
\renewcommand{\S}{\mathbf{S}}
\DeclareMathOperator{\grp}{\pi}
\newcommand{\fzmu}{\mu^\textup{FZ}} 
\newcommand{\prefzmu}{\widetilde{\mu}^\textup{FZ}} 
\newcommand{\cf}{{\em cf.}\ }
\newcommand{\ncl}[1]{\langle\!\langle #1 \rangle\!\rangle}
\newcommand{\trht}{\mathbf{1}} 
\newcommand{\punc}{\mathbf{P}} 
\newcommand{\tarc}{\mathbf{A}^{\bowtie}} 
\newcommand{\tagcom}{\Delta^{\bowtie}} 
\newcommand{\coi}{\mathrm{I}} 
\newcommand{\coii}{\mathrm{II}} 
\title[Mutations of quivers with $2$-cycles]
{Mutations of quivers with $2$-cycles}
\author{Fang Li}
\address{School of Mathematical Sciences, Zhejiang University, Hangzhou, Zhejiang 310058, China}
\email{fangli@zju.edu.cn}
\author{Siyang Liu}
\address{School of Mathematics, Hangzhou Normal University, Hangzhou, Zhejiang 311121, China}
\email{siyangliu@hznu.edu.cn}
\author{Lang Mou}
\address{Department of Mathematics, Mathematical Sciences Building, One Shields Ave. University of California
Davis, CA 95616, USA}
\email{lmou.math@gmail.com}
\author{Jie Pan}
\address{Department of Mathematics, Faculty of Sciences, University of Sherbrooke, Sherbrooke, Quebec, Canada}
\email{jie.pan@usherbrooke.ca}
\thanks{\textit{Mathematics Subject Classification(2020): 13F60, 16G20, 55P10}}
\thanks{\textit{Keywords}: quiver, 2-cycles, homotopy, cluster algebra, mutation}
\begin{document}

\begin{abstract}
    We develop a mutation theory for quivers with oriented 2-cycles using a structure called a homotopy, defined as a normal subgroupoid of the quiver's fundamental groupoid. This framework extends Fomin--Zelevinsky mutations of $2$-acyclic quivers and yields involutive mutations that preserve the fundamental groupoid quotient by the homotopy. It generalizes orbit mutations arising from quiver coverings and allows for infinite mutation sequences even when orbit mutations are obstructed. We further construct quivers with homotopies from triangulations of marked surfaces with colored punctures, and prove that flips correspond to mutations, extending the Fomin--Shapiro--Thurston model to the setting with 2-cycles.
\end{abstract}

\maketitle

\tableofcontents

\section{Introduction}\label{section: introduction}

The Fomin--Zelevinsky mutation (FZ-mutation) of quivers provides the combinatorial foundation of cluster algebra theory \cite{FZI}, but it applies only to $2$-acyclic quivers. This limitation excludes natural examples arising in representation theory \cite{Palu,BO} and physics \cite{DM96,BD02}. In this paper, we propose a framework for mutations of quivers with oriented 2-cycles, generalizing the FZ-mutation.

To extend mutations to quivers with 2-cycles, we introduce an additional combinatorial structure called a \emph{homotopy}. A homotopy is an equivalence relation on paths in the underlying graph of the quiver; algebraically, it corresponds to a normal subgroupoid of the fundamental groupoid. Unlike the classical FZ-mutation, which depends only on the quiver structure, our mutation rules depend on the chosen homotopy.

We call a homotopy $H$ on $Q$ \emph{reduced} if no $2$-cycle lies in $H$. For any loop-free quiver with reduced homotopy $(Q, H)$ and a vertex $k$ of $Q$, we construct the \emph{mutation} $\mu_k(Q, H)$ as another quiver with reduced homotopy $(Q^\dag, H^\dag)$; see \Cref{subsec: mutations with homotopy} for the precise definition.

A key property of this construction, as in the 2-acyclic case, is that mutation is involutive on quivers with reduced homotopies (\Cref{prop: mutation involutive}). This involutivity enables natural extensions of classical cluster-theoretic constructions. Furthermore, the fundamental groupoid quotient $\pi_1(Q)/H$ is invariant under mutation (\Cref{remark: quotient groupoid invariant}), yielding an isomorphism $\pi_1(Q)/H \cong \pi_1(Q^\dag)/H^\dag$.

The classical FZ-mutation $\mu_k(Q)$ proceeds via a pre-mutation $\widetilde\mu_k(Q)$, which may create new $2$-cycles. This step extends naturally to quivers with $2$-cycles. In our framework, the homotopy $H$ determines which of the newly created $2$-cycles in $\widetilde\mu_k(Q)$ are removed. For example, with the trivial homotopy, no $2$-cycles are removed; with the maximal homotopy, all newly created $2$-cycles are removed. Thus, the FZ-mutation of a $2$-acyclic quiver is recovered precisely as mutation with maximal homotopy (\Cref{prop: maximal H induces FZ mutation}). See \Cref{fig: intro mutation} for an example illustrating how different homotopies lead to different quiver mutations.

\begin{figure}[ht]
    \centering
    \begin{tikzpicture}[scale=0.5]
        \begin{scope}[shift={(0,0)}]
            \node[] (1) at ({-sqrt(3)}, -1) {$1$};
            \node[] (3) at ({sqrt(3)}, -1) {$3$};
            \node[] (2) at (0,2) {$2$};

            \draw[->] (1) to node[left]{$c$} (2);
            \draw[->] (2) to node[right]{$b$} (3);
            \draw[->] (3) to node[below]{$a$} (1);
        \end{scope}

        \draw[->] (4,{sqrt(3)}) to node[above, sloped]{$\mu_2$} (6, {sqrt(3)+1});

        \draw[->] (4, {-sqrt(3)}) to node[above, sloped]{$\mu_2$} (6, {-sqrt(3)-1});

        \begin{scope}[shift={(10,{sqrt(12)})}]
            \node[] (1) at ({-sqrt(3)}, -1) {$1$};
            \node[] (3) at ({sqrt(3)}, -1) {$3$};
            \node[] (2) at (0,2) {$2$};

            \draw[<-] (1) to node[left]{$c^\star$} (2);
            \draw[<-] (2) to node[right]{$b^\star$}(3);
            \draw[->, bend left=10] (3) to node[below]{$a$} (1);
            \draw[->, bend left=10] (1) to node[above]{$[bc]$}(3);
        \end{scope}

        \begin{scope}[shift={(10,{-sqrt(12)})}]
            \node[] (1) at ({-sqrt(3)}, -1) {$1$};
            \node[] (3) at ({sqrt(3)}, -1) {$3$};
            \node[] (2) at (0,2) {$2$};

            \draw[<-] (1) to node[left]{$c^\star$}(2);
            \draw[<-] (2) to node[right]{$b^\star$}(3);
        \end{scope}

        \node[] at (19,{sqrt(12)}) {mutation with the trivial homotopy};
        
        \node[] at (18,{-sqrt(12)}) {mutation with $H=\ncl{abc}$};
    \end{tikzpicture}
    \caption{Mutations with different homotopies result in different quivers.}
    \label{fig: intro mutation}
\end{figure}
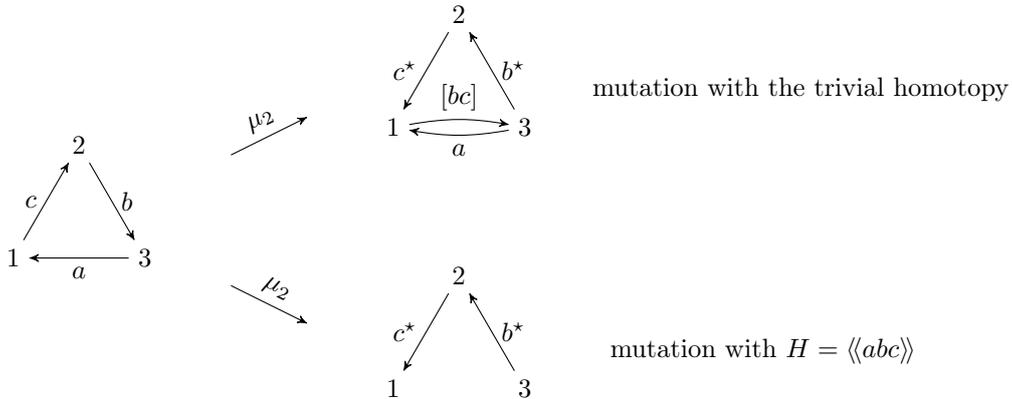

\subsection{Main results}

For orientation, we record here the main statements proved in this paper. Undefined terms are recalled later in the introduction.

First, we show that mutation with a reduced homotopy is \emph{involutive} and preserves the quotient groupoid $\pi_1(Q)/H$; in particular, for $2$-acyclic $Q$ the maximal choice $H = \pi_1(Q)$ recovers the classical FZ-mutation (\Cref{prop: mutation involutive}, \Cref{remark: quotient groupoid invariant}, \Cref{prop: maximal H induces FZ mutation}).

We relate our construction to coverings: for weakly admissible coverings $p\colon \widetilde Q \rightarrow Q$, orbit mutations of $\widetilde Q$ agree with mutations of $Q$ with the homotopy determined by the covering data; moreover, for acyclic $Q$ even the trivial homotopy yields indefinite FZ-mutation sequences, giving a lower bound on fundamental groups across the mutation class (\Cref{prop: orbit mutation compatible}, \Cref{thm: any homotopy induces fz mutation acyclic}, \Cref{cor: lower bound fund group}).

From triangulations of marked bordered surfaces with $\coii$-punctures we construct quivers with homotopies $(Q(T), H(T))$ associated to triangulations $T$ and establish a \emph{flip/mutation} correspondence in the 2-cycle setting (\Cref{thm: mutation and flip}). We also give a topological realization of $(Q(T), H(T))$ via $2$-complexes built from T (\Cref{def: 2-complex X(T)}, \Cref{prop: fund grp surface}).

Finally, in \Cref{section: 2-cycle cluster algebras}, we associate cluster algebras $\mathcal A(Q,H)$ to $(Q, H)$ and verify the Laurent phenomenon in the cases arising from weakly admissible coverings that admit indefinite orbit mutations (\Cref{thm: laurent}).

\subsection{Generalizing orbit mutations}

Our motivation partly comes from the theory of quiver coverings. A covering of quivers $p\colon \widetilde Q \rightarrow Q$ is a covering map of the underlying graphs which respects the orientation of arrows. Coverings closely related to cluster theory are globally admissible coverings, which are defined between 2-acyclic (valued) quivers and compatible with FZ-mutation. They are typically used both to produce new cluster patterns from existing ones \cite[Section 4.4]{FWZ-II}) and to reduce complex cluster patterns to simpler ones \cite{Dup08,Kel13,HL}. In this paper, we will see that loosening the admissibility condition leads to interesting extensions.

We say that a normal covering $p$ is \emph{weakly admissible} if $\widetilde Q$ is $2$-acyclic and no arrow connects vertices lying in the same orbit of deck transformations (so loops are excluded, but 2-cycles may remain in $Q$). In this case, a sequence of FZ-mutations applied once at each vertex in an orbit can be performed on $\widetilde Q$, and the resulting composition is called an \emph{orbit mutation}. This composition is independent of the order in which the vertices are mutated. In \Cref{subsec: homotopy and orbit mutation}, we show that the quotient quiver after an orbit mutation coincides with the mutation with homotopy associated to the covering data.

However, orbit mutations may destroy weak admissibility, thereby obstructing further orbit mutations. In contrast, mutations of quivers with homotopies can always be applied indefinitely. This suggests that our homotopy-based mutations offer a natural extension beyond orbit mutations arising from coverings.

\subsection{Quivers with homotopies arising from surface triangulations}\label{subsec: qh from surface}

In \Cref{section: marked surfaces}, we construct a class of quivers with homotopies arising from triangulations of marked bordered surfaces with colored punctures. We generalize the model of Fomin, Shapiro and Thurston \cite{FST08} by assigning two distinct colors, $\coi$ and $\coii$, to interior marked points (known as \emph{punctures}). Punctures of color $\coii$ give rise to $2$-cycles in the associated quivers for certain triangulations. 

Our main result in this direction, \Cref{thm: mutation and flip}, establishes that mutations of quivers with homotopies are compatible with flips of triangulations. We note that mutations of this class of quivers have previously appeared in the work of Paquette--Schiffler \cite{PS} and Kaufman \cite{Kaufman}. Our new contributions are: (i) constructing homotopies directly from triangulations, and (ii) showing that quiver mutations are determined entirely by the corresponding homotopies. In the case where $\coii$-punctures are absent, our flip/mutation correspondence (\Cref{thm: mutation and flip}) for quivers with homotopies is analogous to Labardini-Fragoso's correspondence on quivers with potentials \cite[Theorem 30]{LF09}. Certain quivers with potentials---emphasizing quivers with $2$-cycles arising from triangulations---have been studied recently in \cite{LiPeng}. See also a comparison between homotopies and potentials in \Cref{subsec: compare homotopy potential}.

\subsection{Cluster algebras associated to quivers with $2$-cycles}

The cluster exchange relations in Fomin--Zelevinsky cluster algebras \cite{FZI} naturally extend to the case of quivers with $2$-cycles:
\begin{equation}\label{eq: exchange relation intro}
    x_k'x_k = \prod_{i} x_i^{p_{ik}} + \prod_{i} x_i^{p_{ki}},
\end{equation}
where $p_{ij}$ denotes the number of arrows from vertex $i$ to vertex $j$ in a quiver $Q$ and in both products $i$ runs through vertices in $Q$. The involutivity of mutations allows us to define a cluster algebra $\mathcal A(Q, H)$ associated to any quiver with homotopy $(Q, H)$, formalized in \Cref{section: 2-cycle cluster algebras}. The cluster variables are generated as rational functions from exchange relations \eqref{eq: exchange relation intro} associated to quivers obtained from $Q$ by mutations with homotopies.

A crucial difference with ordinary cluster algebras is that the two summands on the right-hand side of \eqref{eq: exchange relation intro} may share non-trivial common factors due to the presence of $2$-cycles. This prevents the direct extension of Fomin--Zelevinsky's proof of the \emph{Laurent phenomenon} \cite{FZI}. Nevertheless, when $(Q, H)$ arises from a weakly admissible quiver covering that admits indefinite orbit mutations, the Laurent phenomenon does hold (\Cref{thm: laurent}). We leave the general case for future research; see \Cref{section: final remarks} for final remarks.

\section{Quiver mutation and covering of quivers}

We first recall mutations of quivers without oriented $2$-cycles in \Cref{subsec: fz mutation} and then study how mutations interact with coverings of quivers in \Cref{subsec: quiver covering,subsec: orbit mutation}.

\subsection{The Fomin--Zelevinsky quiver mutation}\label{subsec: fz mutation}

A \emph{quiver} $Q = (Q_0, Q_1, s, t)$ consists of a set $Q_0$ of \emph{vertices} and a set $Q_1$ of \emph{arrows}, together with two maps $s, t \colon Q_1 \rightarrow Q_0$ assigning to each arrow $a$ to its \emph{source} $s(a)$ and \emph{target} $t(a)$ respectively. A \emph{loop} in $Q$ is an arrow $a\in Q_1$ such that $s(a) = t(a)$. Throughout this paper, we will restrict attention to \emph{locally finite} quivers, meaning that both $s^{-1}(v)$ and $t^{-1}(v)$ are finite sets for any $v\in Q_0$.

A quiver is often depicted as a directed graph, where each arrow is drawn as an edge pointing from its source vertex to its target vertex.

We say that a quiver $Q$ is \emph{$2$-acyclic} if it contains no (oriented) \emph{$2$-cycles}, that is, there do not exist $\alpha, \beta\in Q_1$ such that $t(\alpha) = s(\beta)$ and $s(\alpha) = t(\beta)$. In particular, a 2-acyclic quiver has no loops, since a loop forms a $2$-cycle with itself.

In defining mutations of $2$-acyclic quivers and their associated cluster algebras, the relevant data is not the quiver itself, but the number of arrows between each pair of vertices. Accordingly, a $2$-acyclic quiver $Q$ can be represented by a skew-symmetric integer matrix $B = B(Q) = (b_{i,j}) \in \mathbb Z^{Q_0\times Q_0}$, where for any $(i, j)\in Q_0\times Q_0$,
\[
    b_{i,j} \coloneqq |\{a\in Q_1 \mid s(a) = j, t(a) = i\}| - |\{a\in Q_1 \mid s(a) = i, t(a) = j\}|.
\]

Fomin and Zelevinsky \cite{FZI} introduced operations called \emph{mutations} on skew-symmetric matrices (actually more generally on skew-symmetrizable matrices and even sign-skew-symmetric matrices). These operations are fundamental to their theory of cluster algebras. In terms of $2$-acyclic quivers, mutations can be described as follows.

\begin{definition}\label{def: fz mutation}
    Let $Q$ be a $2$-acyclic quiver. The \emph{Fomin--Zelevinsky mutation} (FZ-mutation) of $Q$ at direction $k\in Q_0$ is the $2$-acyclic quiver $\fzmu_k(Q)$ obtained from $Q$ by
    \begin{enumerate}
        \item for each pair $(\alpha, \beta)\in Q_1 \times Q_1$ such that $s(\alpha) = t(\beta) = k$, adding a new arrow $[\alpha\beta]$ from $s(\beta)$ to $t(\alpha)$;
        \item reversing all arrows in $Q_1$ incident to $k$, that is, replacing each $\alpha\in Q_1$ such that $s(\alpha) = k$ or $t(\alpha) = k$ with a new arrow $\alpha^\star$ such that $s(\alpha^\star) = t(\alpha)$ and $t(\alpha^\star) = s(\alpha)$;
        \item deleting a maximal collection of $2$-cycles created after the second step.
    \end{enumerate}
\end{definition}

The operation that only performs the first two steps is called a \emph{pre-mutation}. It can be extended to operate on quivers with $2$-cycles as follows.

\begin{definition}\label{def: fz premutation}
    Let $Q$ be a loop-free quiver. The \emph{pre-mutation} of $Q$ at direction $k\in Q_0$ is another loop-free quiver $\prefzmu_k(Q)$ obtained from $Q$ by
    \begin{enumerate}
        \item for each pair $(\alpha, \beta)\in Q_1 \times Q_1$ such that $s(\alpha) = t(\beta) = k$ and $s(\beta)\neq t(\alpha)$, adding a new arrow $[\alpha\beta]$ from $s(\beta)$ to $t(\alpha)$;
        \item reversing all arrows in $Q_1$ incident to $k$.
    \end{enumerate}
\end{definition}

There is a choice made in the last step $(3)$ of $\fzmu_k$. Nevertheless the quiver $\fzmu_k(Q)$ is well-defined in the sense that the matrix $\fzmu_k(B) \coloneqq B(\fzmu_k(Q))$ is uniquely determined. It is easy to verify combinatorially that the operation $\fzmu_k$ is involutive.

\subsection{Coverings of quivers}\label{subsec: quiver covering}

\begin{definition}\label{def: covering between quivers}
    A \emph{covering (map)} $p = (p_0, p_1)$ from a quiver $\widetilde Q$ to another quiver $Q$ (denoted as $p\colon \widetilde Q \rightarrow Q$) consists of a map $p_0 \colon \widetilde Q_0 \rightarrow Q_0$ and a map $p_1 \colon \widetilde Q_1 \rightarrow Q_1$ such that 
    \begin{enumerate}
        \item $(s\circ p_1) (a) = (p_0 \circ s) (a)$ and $(t \circ p_1) (a) = (p_0 \circ t)(a)$ for any arrow $a\in \widetilde Q_1$;
        \item the map $p_0$ on vertices is surjective;
        \item for any $v\in \widetilde Q_0$, the map $p_1$ induces a bijection from $\{a\in \widetilde Q_1 \mid t(a) = v \}$ to $\{a\in Q_1 \mid t(a) = p_0(v)\}$ and a bijection from $\{a\in \widetilde Q_1 \mid s(a) = v \}$ to $\{a\in Q_1 \mid s(a) = p_0(v)\}$.
    \end{enumerate}
\end{definition}

\begin{remark}
    A pair $p = (p_0, p_1)$ as in the above definition only satisfying (1) is called a \emph{map} or \emph{morphism} between quivers. It is called an \emph{isomorphism} if both $p_0$ and $p_1$ are bijections. We will simply use $p$ for both $p_0$ and $p_1$ when it causes no ambiguity.
\end{remark}

A covering $p$ between quivers induces in the obvious way a (topological) covering map between the underlying unoriented graphs (realized as $1$-complexes). Various constructions and properties in those contexts carry over to quivers. For example, we have the group $\operatorname{Deck}(p)$ of deck transformations of the topological covering $p$, which can also be identified with the group of automorphisms of $\widetilde Q$ commuting with $p\colon \widetilde Q \rightarrow Q$. In addition, for any connected quiver $Q$, there always exists a (connected) tree quiver $\widehat Q$ as the \emph{universal cover} with a covering $\hat p \colon \widehat Q \rightarrow Q$; see for example \cite[1.A]{Hatcher} and \cite[Construction 2.6]{HL}.

There is a simple description of elements in $\operatorname{Deck}(\hat p)$. Let $v$ be a vertex in $Q_0$ and $v'$ be any vertex in $\hat p^{-1}(v)$. For any $v''\in \hat p^{-1}(v)$, there is a unique reduced walk $w$ from $v'$ to $v''$ since $\widehat Q$ is a tree. Sending $v'$ to $v''$ uniquely determines a deck transformation for the universal cover. The covering map $\hat p$ sends $w$ to a reduced walk $\hat p(w)$ in $Q$ from $v$ to itself. It is called a \emph{cyclic walk} in $Q$ as in \Cref{def: cyclic walk}. In this way, the group $\operatorname{Deck}(\hat p)$ is isomorphic to the fundamental group $\pi_1(Q, v)$ based at $v\in Q_0$ where we view $Q$ as a $1$-complex.

\begin{definition}\label{def: regular cover}
    A covering $p \colon \widetilde Q \rightarrow Q$ between connected quivers $\widetilde Q$ and $Q$ is called \emph{regular (or normal)} if $\operatorname{Deck}(p)$ acts on $p^{-1}(v)$ transitively for any $v\in Q_0$. 
\end{definition}

\begin{remark}
    Any covering $p$ induces a group homomorphism $p_* \colon \pi_1(\widetilde Q, \tilde v) \rightarrow \pi_1(Q, v)$ where $p(\tilde v) = v$. Being regular is equivalent to that the image of $p_*$ is a normal subgroup of $\pi_1(Q, v)$.
\end{remark}

\begin{example}\label{ex: loop-free covering}
    The following covering is non-regular because there is no deck transformation sending the upper left vertex $2$ to the lower left vertex $2$.
    \[
        \widetilde Q = \begin{tikzcd}[column sep = tiny]
        	& 2 && 1 && 2 \\
        	3 & 1 & 3 && 3 & 1 & 3 \\
        	& 2 && 1 && 2
        	\arrow[from=1-2, to=2-1]
        	\arrow[from=2-1, to=3-2]
        	\arrow[from=3-2, to=2-2]
        	\arrow[from=2-2, to=1-2]
        	\arrow[from=3-2, to=2-3]
        	\arrow[from=2-3, to=1-2]
        	\arrow[from=1-2, to=1-4]
        	\arrow[from=1-4, to=1-6]
        	\arrow[from=1-6, to=2-6]
        	\arrow[from=1-6, to=2-5]
        	\arrow[from=2-5, to=3-6]
        	\arrow[from=2-6, to=3-6]
        	\arrow[from=3-6, to=2-7]
        	\arrow[from=2-7, to=1-6]
        	\arrow[from=3-6, to=3-4]
        	\arrow[from=3-4, to=3-2]
        \end{tikzcd}
        \quad
        \xlongrightarrow{p}
        \quad
        Q = \begin{tikzcd}
            1 \ar[r, bend left] & 2 \ar[l, bend left] \ar[r, bend left] & 3 \ar[l, bend left]
        \end{tikzcd}.
    \]
\end{example}

The following proposition is standard in topology of graphs; see for example \cite{Hatcher}.

\begin{proposition}[Galois correspondence]\label{prop: galois correspondence}
    There is a bijection between regular coverings of $Q$ (up to isomorphisms relative to $Q$) and normal subgroups of $\pi_1(Q, v)$ for any $v\in Q_0$. If $p \colon \widetilde Q \rightarrow Q$ is a regular covering, the fundamental group $\pi_1(Q, v)$ for any $v\in Q_0$ acts on $\widetilde Q$ by deck transformations. This gives rise to a surjective group homomorphism from $\pi_1(Q, v)$ to $\operatorname{Deck}(p)$ whose kernel is the normal subgroup $p_*(\pi_1(\widetilde Q, \tilde v))$ in $\pi_1(Q, v)$ for any $\tilde v \in p^{-1}(v)$. 
\end{proposition}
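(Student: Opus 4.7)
The plan is to reduce the statement to the classical Galois correspondence for topological coverings of CW complexes, with the only real content being the translation between combinatorial quiver coverings and topological coverings of the underlying 1-complex.

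First, I would verify that quiver coverings in the sense of \Cref{def: covering between quivers} are in bijection with topological coverings of the underlying unoriented graph $|Q|$ of $Q$, viewed as a 1-complex. Given a quiver covering $p \colon \widetilde Q \to Q$, forgetting orientations yields a topological covering $|p| \colon |\widetilde Q| \to |Q|$: condition (3) of the definition is precisely the combinatorial form of the local-homeomorphism condition, namely that at each vertex $\tilde v$ the set of half-edges at $\tilde v$ maps bijectively to the set of half-edges at $p(\tilde v)$. Conversely, for any topological covering of $|Q|$, the orientation of each edge lifts uniquely using those local half-edge bijections, producing a quiver covering. This correspondence is compatible with isomorphisms relative to $Q$ and with deck transformations, so the classifications of quiver coverings and of topological coverings of $|Q|$ coincide.

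Next, I would invoke the classical Galois correspondence for covering spaces (see \cite[Thm.~1.38]{Hatcher}): since any connected 1-complex is locally path-connected and semi-locally simply connected, regular topological coverings of $|Q|$ correspond bijectively to normal subgroups of $\pi_1(|Q|, v) = \pi_1(Q, v)$. The regularity condition in \Cref{def: regular cover}, namely that $\operatorname{Deck}(p)$ acts transitively on each fiber, matches the standard topological notion. Together with Step~1 this gives the first assertion.

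For the second part, I would construct the homomorphism $\pi_1(Q, v) \to \operatorname{Deck}(p)$ by path lifting. Given $[\gamma] \in \pi_1(Q, v)$ and the chosen $\tilde v \in p^{-1}(v)$, the unique lift of $\gamma$ starting at $\tilde v$ ends at some $\tilde v' \in p^{-1}(v)$; by regularity there is a unique deck transformation sending $\tilde v$ to $\tilde v'$, and we assign this to $[\gamma]$. Homotopy lifting shows that this depends only on the homotopy class and is a group homomorphism, and transitivity on the fiber yields surjectivity. Its kernel consists exactly of the classes whose lifts based at $\tilde v$ are loops, which by definition is the image $p_*(\pi_1(\widetilde Q, \tilde v))$; this identifies the kernel, and standard covering-space theory confirms that this subgroup is normal (reflecting the independence of basepoint $\tilde v$ in the fiber under the regularity hypothesis).

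The only step requiring more than a pointer to a textbook is the first one, the dictionary between quiver coverings and topological coverings with oriented edges; once that is set up, everything else is a quotation of classical covering-space theory and presents no further obstacle.
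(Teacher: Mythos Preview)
Your proposal is correct and matches the paper's approach: the paper does not give a proof but simply declares the proposition ``standard in topology of graphs'' with a reference to \cite{Hatcher}, having already noted that a quiver covering induces a topological covering of the underlying $1$-complex. Your write-up is exactly the fleshing-out of that citation, with the quiver-to-topological-covering dictionary as the one translation step.
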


\begin{example}\label{ex: universal abelian cover}
    A covering $p \colon \widetilde Q \rightarrow Q$ is called \emph{abelian} if it is regular and $\operatorname{Deck}(p)$ is an abelian group. For each (connected) quiver $Q$, there is a \emph{universal abelian covering} that is a covering of any other abelian covering of $Q$. It corresponds to the normal subgroup of commutators $[\operatorname{Deck}(p), \operatorname{Deck}(p)]$.
\end{example}

Instead of starting with a covering, one may consider a free action of a group $\Gamma$ on a (connected) quiver $\widetilde Q$. This in particular implies that each element in $\Gamma$ gives an automorphism on $\widetilde Q$ and the induced group actions on $\widetilde Q_0$ and $\widetilde Q_1$ are both free. The quotient quiver $Q = \widetilde Q/\Gamma$ is defined to have vertices and arrows both as orbit sets. Then the natural quotient map $p \colon \widetilde Q \rightarrow Q$ is a normal covering with $\Gamma = \operatorname{Deck}(p)$.

\subsection{Orbit mutations}\label{subsec: orbit mutation}

We examine how FZ-mutations interact with quiver coverings. The main observation is that orbit mutations on the covering quiver are well-defined if the covering admits certain admissible conditions in \Cref{def: weakly admissible}. How the quotient quiver (which may contain $2$-cycles) changes is the guiding principle of their mutation rules, which will be incorporated into mutations of quivers with homotopies in \Cref{section: quivers with homotopies}.

\begin{definition}\label{def: weakly admissible}
    A covering $p\colon \widetilde Q \rightarrow Q$ is called \emph{admissible} if both $\widetilde Q$ and $Q$ are $2$-acyclic. It is called \emph{weakly admissible} if $\widetilde Q$ is $2$-acyclic and $Q$ is loop-free.
\end{definition}

\begin{definition}[Orbit mutation]\label{def: orbit mutation}
    Let $p\colon \widetilde Q \rightarrow Q$ be a weakly admissible covering. The \emph{orbit pre-mutation} $\prefzmu_{[k]}(\widetilde Q)$ of $\widetilde Q$ at direction $k\in Q_0$ is obtained from $\widetilde Q$ by
    \begin{enumerate}
        \item reversing all arrows of $\widetilde Q$ incident to any vertex in $p^{-1}(k)$;
        \item for each pair $(\alpha, \beta)\in \widetilde Q_1 \times \widetilde Q_1$ such that $s(\alpha) = t(\beta) \in p^{-1}(k)$, adding a new arrow $[\alpha\beta]$ from $s(\beta)$ to $t(\alpha)$.
    \end{enumerate}
    The \emph{orbit mutation} $\fzmu_{[k]}(\widetilde Q)$ is obtained by further
    \begin{enumerate}
        \item[(3)] deleting a maximal collection of $2$-cycles in $\prefzmu_{[k]}(\widetilde Q)$.
    \end{enumerate}
\end{definition}

\begin{remark}
    The loop-free condition on $Q$ ensures that the step (1) in the above definition is well-defined, as there is no arrow in $\widetilde Q$ between two vertices in $p^{-1}(k)$.
\end{remark}

For any quiver $Q$, denote by $Q^{\operatorname{lf}}$ the quiver obtained from $Q$ by deleting all loops. If $Q$ carries a free $\Gamma$-action, denote by $Q^{\operatorname{\Gamma-lf}}$ the quiver obtained from $Q$ by deleting any arrow between two vertices in the same $\Gamma$-orbit. Such an arrow will be referred to as a \emph{$\Gamma$-loop}. The superscripts $\operatorname{lf}$ and $\operatorname{\Gamma-lf}$ stand for loop free and $\Gamma$-loop free respectively.

\begin{lemma}\label{lemma: delete loops}
    Let $p\colon \widetilde Q \rightarrow Q$ be a weakly admissible quiver covering and $k\in Q_0$. Then the group $\Gamma = \operatorname{Deck}(p)$ acts freely on the quivers 
    \[
        \prefzmu_{[k]}(\widetilde Q) \quad \text{and} \quad \prefzmu_{[k]}(\widetilde Q)^{\operatorname{\Gamma-lf}}
    \]
    by automorphisms. We have
    \[
        \prefzmu_{[k]}(\widetilde Q)^{\operatorname{\Gamma-lf}}/\Gamma = \left(\prefzmu_{[k]}(\widetilde Q)/\Gamma\right)^{\operatorname{lf}} =   \prefzmu_k(Q).  
    \]
\end{lemma}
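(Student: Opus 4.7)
The plan is to verify the $\Gamma$-action is well-defined and free, check that loop deletion commutes with taking the quotient, and then identify the quotient with $\prefzmu_k(Q)$ by matching constructions term by term.

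First, since $\Gamma = \operatorname{Deck}(p)$ preserves fibers of $p$, the set $p^{-1}(k)$ is $\Gamma$-stable, so arrows incident to $p^{-1}(k)$ form a $\Gamma$-stable set and their reversal (step (1) of \Cref{def: orbit mutation}) is $\Gamma$-equivariant. Likewise the collection of composable pairs $(\alpha, \beta)$ with $s(\alpha) = t(\beta) \in p^{-1}(k)$ is $\Gamma$-stable, and setting $\gamma \cdot [\alpha\beta] := [(\gamma\alpha)(\gamma\beta)]$ extends the action to the new arrows of step (2). The action is free on vertices (same set as $\widetilde Q_0$), free on unchanged and reversed arrows (same underlying set as $\widetilde Q_1$), and free on new arrows since $\Gamma$ acts freely on ordered pairs of arrows. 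As the property of being a $\Gamma$-loop is $\Gamma$-invariant, the action restricts to $\prefzmu_{[k]}(\widetilde Q)^{\operatorname{\Gamma-lf}}$, again freely.

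For the first equality, I would use the general observation that for any quiver with a free $\Gamma$-action, an arrow $a$ projects to a loop in the quotient if and only if $s(a)$ and $t(a)$ lie in the same $\Gamma$-orbit, i.e., $a$ is a $\Gamma$-loop. Hence $\Gamma$-loop deletion and passage to the quotient commute, giving $\prefzmu_{[k]}(\widetilde Q)^{\operatorname{\Gamma-lf}}/\Gamma = (\prefzmu_{[k]}(\widetilde Q)/\Gamma)^{\operatorname{lf}}$.

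The core step is the equality $(\prefzmu_{[k]}(\widetilde Q)/\Gamma)^{\operatorname{lf}} = \prefzmu_k(Q)$. I would partition the arrows of $\prefzmu_{[k]}(\widetilde Q)$ into three classes: (a) arrows of $\widetilde Q_1$ not incident to $p^{-1}(k)$, (b) reversed arrows $\alpha^\star$ with $\alpha$ incident to $p^{-1}(k)$, and (c) newly added arrows $[\alpha\beta]$. Using the arrow-bijections of \Cref{def: covering between quivers}(3), the $\Gamma$-orbits in classes (a) and (b) project bijectively onto the corresponding classes in $\prefzmu_k(Q)$. For (c), transitivity of $\Gamma$ on $p^{-1}(k)$ together with the local arrow-bijections at any fixed lift $\tilde k \in p^{-1}(k)$ shows that $\Gamma$-orbits of composable pairs $(\alpha, \beta)$ at $p^{-1}(k)$ correspond bijectively to composable pairs $(p\alpha, p\beta)$ at $k$ in $Q$. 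The sole discrepancy with \Cref{def: fz premutation} is its clause $s(\beta) \neq t(\alpha)$, which excludes pairs with $p(s(\beta)) = p(t(\alpha))$ from contributing to $\prefzmu_k(Q)$; but those $[\alpha\beta]$ are exactly the $\Gamma$-loops in class (c), and are therefore removed by $(-)^{\operatorname{lf}}$. The main (mild) obstacle is the bookkeeping in class (c): one must verify that $\gamma\cdot[\alpha\beta] = [(\gamma\alpha)(\gamma\beta)]$ is well-defined and that the induced map from $\Gamma$-orbits of lifted pairs to pairs in $Q$ is independent of the choice of $\tilde k$, so that it is in fact a bijection.
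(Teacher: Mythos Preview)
Your proposal is correct and follows essentially the same approach as the paper: both define the $\Gamma$-action on composite arrows by $\gamma\cdot[\alpha\beta]=[(\gamma\alpha)(\gamma\beta)]$, identify the $\Gamma$-loops as precisely the new arrows $[\alpha\beta]$ with $s(\beta),t(\alpha)$ in the same orbit (using that $Q$ is loop-free so original and reversed arrows are never $\Gamma$-loops), and then match arrows to establish the second equality. The only cosmetic difference is that the paper verifies the final bijection as a local isomorphism at each vertex $i\notin p^{-1}(k)$, whereas you organize it by arrow type and use transitivity of $\Gamma$ on $p^{-1}(k)$ at a chosen lift $\tilde k$; these are two packagings of the same counting.
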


\begin{proof}
    The action of $\Gamma$ on the vertex set remains unchanged. For any arrow $\alpha$ incident to $p^{-1}(k)$ and $\tau\in \Gamma$, we let $\tau(\alpha^\star) = (\tau(\alpha))^\star$ where $(\cdot)^\star$ denotes the reversed arrow in the pre-mutation. For a newly created arrow $[\alpha \beta]$, let $\tau([\alpha\beta]) = [\tau(\alpha)\tau(\beta)]$. It is straightforward that in this way $\Gamma$ acts freely on $\prefzmu_{[k]}(\widetilde Q)$ by automorphisms.

    The quiver $\widetilde Q$ does not have any $\Gamma$-loop. Any $\Gamma$-loop in $\prefzmu_{[k]}(\widetilde Q)$ is of the form $[\alpha\beta]$ arising from the path
    \[
        \begin{tikzcd}
            i \ar[r, "\beta"] & \tilde k \ar[r, "\alpha"] & j
        \end{tikzcd}
    \]
    in $\widetilde Q$ where $p(\tilde k) = k$ and $i$ and $j$ are in the same $\Gamma$-orbit. The loops in $\prefzmu_{[k]}(\widetilde Q)/\Gamma$ can only come from such $[\alpha\beta]$. Therefore we have
    \[
        \prefzmu_{[k]}(\widetilde Q)^{\operatorname{\Gamma-lf}}/\Gamma = \left(\prefzmu_{[k]}(\widetilde Q)/\Gamma\right)^{\operatorname{lf}}.
    \]

    Without the composite arrows, the second equality is clear as we simply reverse arrows in $\widetilde Q$ incident to vertices in $p^{-1}(k)$ and reverse arrows in $Q$ incident to $k$. Now we consider newly created composite arrows. Let $i\in \widetilde Q_0\setminus p^{-1}(k)$. The new arrows of the form $[\alpha\beta]$ incident to $i$ are a set of representatives of $\Gamma$-orbits on the set of new arrows incident to $\Gamma\cdot i$. Delete any $\Gamma$-loop incident to $i$ for every $i$. Now sending (the $\Gamma$-orbit of) $[\alpha\beta]$ to $[p(\alpha)p(\beta)]\in \prefzmu_{k}(Q)$ induces a quiver map from $\left(\prefzmu_{[k]}(\widetilde Q)/\Gamma\right)^{\operatorname{lf}}$ to $\prefzmu_{k}(Q)$. By directly computing the number of arrows incident to $i$ and $p(i)$, this map is a local isomorphism and thus an isomorphism because of the bijection on vertices.
\end{proof}

Between two vertices $i$ and $j$ that are not in $p^{-1}(k)$, there is a (choice of) cancellation of $2$-cycles in the last step of the orbit mutation $\fzmu_{[k]}$. We would like this choice to be consistent with the action of $\Gamma$.

\begin{lemma}\label{lemma: remove 2 cycle orbit}
    There exists a choice of a maximal collection of 2-cycles 
    \[
        R_{i, j} \subseteq (\prefzmu_{[k]}(\widetilde Q))_1(i, j) \times (\prefzmu_{[k]}(\widetilde Q))_1(j, i)
    \]
    between any unordered pair $\{i, j\}$ of vertices with $i \neq j \in \widetilde Q_0\setminus p^{-1}(k)$ such that for any $\tau\in \Gamma$, we have
    \[
        R_{\tau(i), \tau(j)} = \{(\tau(a), \tau(b)) \mid (a, b)\in R_{i,j}\}.
    \]
\end{lemma}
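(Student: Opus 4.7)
\medskip
\noindent\textbf{Proof plan.}
The idea is to choose the collections $R_{i,j}$ one $\Gamma$-orbit at a time and transport them equivariantly across each orbit. The plan is first to consider the action of $\Gamma = \operatorname{Deck}(p)$ on the set $\mathcal{P}$ of unordered pairs $\{i, j\}$ with $i \neq j$ in $\widetilde Q_0 \setminus p^{-1}(k)$, to pick a system of representatives $\{i_\alpha, j_\alpha\}$ for its orbits, and to analyze the stabilizer $\Gamma_{\{i_\alpha, j_\alpha\}}$ in each case. Because $\Gamma$ acts freely on vertices, such a stabilizer has order $1$ or $2$: the nontrivial case occurs exactly when there is a (unique) involution $\tau_\alpha \in \Gamma$ satisfying $\tau_\alpha(i_\alpha) = j_\alpha$ and $\tau_\alpha(j_\alpha) = i_\alpha$.

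For representatives with trivial stabilizer, I would simply pick any maximal collection $R_{\{i_\alpha, j_\alpha\}}$ of $2$-cycles between the two vertices; there is no further constraint. The substantive step is the other case, where I need a maximal collection that is invariant under $\tau_\alpha$ in the sense prescribed by the lemma. The key observation is that $\tau_\alpha$ restricts to a bijection
\[
    \sigma_\alpha \colon (\prefzmu_{[k]}(\widetilde Q))_1(i_\alpha, j_\alpha) \xrightarrow{\sim} (\prefzmu_{[k]}(\widetilde Q))_1(j_\alpha, i_\alpha),
\]
so in particular the two arrow sets are equinumerous. I would then take
\[
    R_{\{i_\alpha, j_\alpha\}} = \bigl\{\,(a, \sigma_\alpha(a)) \,\big|\, a \in (\prefzmu_{[k]}(\widetilde Q))_1(i_\alpha, j_\alpha)\,\bigr\},
\]
which is a perfect, hence maximal, matching. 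A direct calculation using $\tau_\alpha^2 = 1$ then shows that applying $\tau_\alpha$ componentwise sends this collection to $\{(\sigma_\alpha(a), a)\}$, i.e., to itself with the two factors swapped; this is precisely the identity required by the lemma when one passes from the ordering $(i_\alpha, j_\alpha)$ to $(j_\alpha, i_\alpha)$.

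The last step is to extend the choice to each full orbit by transport: for $\{i, j\} = \gamma\cdot \{i_\alpha, j_\alpha\}$ with $\gamma \in \Gamma$, I would set $R_{\{i, j\}}$ to be the image of $R_{\{i_\alpha, j_\alpha\}}$ under $\gamma$, with the ordering dictated by the image vertex labels. Well-definedness is automatic in the trivial-stabilizer case; otherwise any two admissible choices of $\gamma$ differ by $\tau_\alpha$, and the $\tau_\alpha$-invariance just verified guarantees that the two resulting subsets coincide. By construction the family $\{R_{i,j}\}$ satisfies the equivariance $R_{\tau(i), \tau(j)} = \{(\tau(a), \tau(b)) \mid (a, b) \in R_{i,j}\}$ for every $\tau \in \Gamma$. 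The only delicate point is the construction of a $\tau_\alpha$-invariant maximal matching in the non-trivial-stabilizer case, where a naive choice may fail to be preserved; this is the main (though mild) obstacle, and it is handled by the diagonal-style matching built from $\sigma_\alpha$ above.
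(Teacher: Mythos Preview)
Your proposal is correct and follows the same strategy as the paper: split into $\Gamma$-orbits of unordered pairs, choose freely on free orbits, and on an orbit with nontrivial stabilizer $\tau_\alpha$ take the diagonal matching $\{(a,\tau_\alpha(a))\}$, then transport equivariantly. The only cosmetic difference is that the paper first uses weak admissibility to note that in the nontrivial-stabilizer case $i$ and $j$ lie in the same fiber of $p$, hence every arrow between them is a newly created composite $[ab]$, and writes the matching explicitly in that form; your formulation via the bijection $\sigma_\alpha$ bypasses this observation but yields the identical collection.
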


\begin{proof}
    Let $\Gamma$ act on the set of all unordered pairs of vertices in $\widetilde Q_0$. For $\{i, j\}$ with a free $\Gamma$-orbit, one can choose a maximal collection of $2$-cycles between $i$ and $j$ and that determines the choice for any other $\{\tau(i), \tau(j)\}$.
    
    Suppose that $\{i, j\}$ does not have a free $\Gamma$-orbit. This can happen only if $i$ and $j$ belong to the same fiber of the covering $p$. In this case, there is a unique element $\tau\in \Gamma$ such that $\tau(i) = j$ and $\tau(j) = i$. Since $p$ is assumed to be weakly admissible, there is no arrow between $i$ and $j$ in $\widetilde Q$. We take the collection of $2$-cycles
    \[
        \{([ab], [\tau(b)\tau(a)]) \mid t(a) = i, s(b) = j, s(a) = t(b)\in p^{-1}(k), a\in \widetilde Q_1, b\in \widetilde Q_1\}.
    \]
    In fact, these are the all newly created arrows between $i$ and $j$ in the pre-mutation. This determines in this case the collection of $2$-cycles between $\gamma(i)$ and $\gamma(j)$ for any other $\gamma\in \Gamma$.
\end{proof}

The following proposition is a direct corollary of \Cref{lemma: remove 2 cycle orbit}

\begin{proposition}\label{prop: mutation of a covering}
    With the choices made in \Cref{lemma: remove 2 cycle orbit}, the group $\Gamma = \mathrm{Deck}(p)$ acts freely on the orbit mutation $\fzmu_{[k]}(\widetilde Q)$ by quiver automorphisms, inducing a quiver covering 
    \[
         p'\colon \fzmu_{[k]}(\widetilde Q) \rightarrow \fzmu_{[k]}(\widetilde Q)/\Gamma.
    \]
    We call the covering $p'$ the orbit mutation of $p$ at direction $[k]$ and denote it by $\fzmu_{[k]}(p)$.
\end{proposition}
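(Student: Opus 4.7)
The plan is to combine \Cref{lemma: delete loops} with \Cref{lemma: remove 2 cycle orbit} to obtain a $\Gamma$-equivariant deletion of $2$-cycles, and then to invoke the general fact (stated at the end of \Cref{subsec: quiver covering}) that a free action of a group on a connected quiver by automorphisms yields a normal covering onto the quotient quiver.

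First I would record that, by \Cref{lemma: delete loops}, $\Gamma = \mathrm{Deck}(p)$ already acts freely on $\prefzmu_{[k]}(\widetilde Q)$ by quiver automorphisms. The orbit mutation $\fzmu_{[k]}(\widetilde Q)$ is obtained from $\prefzmu_{[k]}(\widetilde Q)$ by deleting a maximal collection of $2$-cycles between pairs of distinct vertices in $\widetilde Q_0 \setminus p^{-1}(k)$. I would then apply \Cref{lemma: remove 2 cycle orbit} to fix, for each unordered pair $\{i,j\}$, a maximal collection $R_{i,j}$ whose assignment satisfies $R_{\tau(i),\tau(j)} = \{(\tau(a),\tau(b))\mid (a,b)\in R_{i,j}\}$ for every $\tau \in \Gamma$. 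Deleting the union of the arrows appearing in the $R_{i,j}$'s produces the orbit mutation $\fzmu_{[k]}(\widetilde Q)$.

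Next I would check that $\Gamma$ still acts on $\fzmu_{[k]}(\widetilde Q)$ by quiver automorphisms: for an arrow $a$ not deleted, $\tau(a)$ also survives, since the compatibility relation between $R_{i,j}$ and $R_{\tau(i),\tau(j)}$ forces deleted arrows and surviving arrows to be permuted among themselves by $\Gamma$. Freeness of the action on vertices is inherited from $\widetilde Q$ since the vertex set does not change. Thus $\fzmu_{[k]}(\widetilde Q)$ carries a free action of $\Gamma$ by quiver automorphisms, and the natural quotient map $p'\colon \fzmu_{[k]}(\widetilde Q) \rightarrow \fzmu_{[k]}(\widetilde Q)/\Gamma$ is a normal covering with $\mathrm{Deck}(p') = \Gamma$.

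The only potentially subtle point is verifying condition (3) in \Cref{def: covering between quivers} for $p'$, i.e., that at each vertex $v$ the source/target fibers in the cover map bijectively to those in the quotient. But this is automatic from freeness of the action by automorphisms: if $\tau(v) = v$ then $\tau = \mathrm{id}$, so no two arrows in the star of $v$ can be identified in the quotient, while any arrow in the star of $[v]$ lifts uniquely to an arrow in the star of $v$ by construction of the quotient quiver. Hence there is no real obstacle and the statement follows as a direct corollary of \Cref{lemma: remove 2 cycle orbit}, as advertised.
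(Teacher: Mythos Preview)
Your proposal is correct and follows essentially the same approach as the paper: both use the $\Gamma$-action on the pre-mutation (established in \Cref{lemma: delete loops}), observe that the $\Gamma$-equivariant choice of $2$-cycles from \Cref{lemma: remove 2 cycle orbit} means one deletes whole $\Gamma$-orbits of arrows, and conclude that the residual free action yields the desired quotient covering. Your write-up is somewhat more explicit about verifying the covering axioms, but the argument is the same.
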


\begin{proof}
    As discussed earlier $\Gamma$ acts on the orbit pre-mutation. With $2$-cycles chosen in \Cref{lemma: remove 2 cycle orbit} removed, the rest arrows between any $\{i, j\}$ are transmitted by the action of $\Gamma$. In fact, what gets removed is (free) $\Gamma$-orbits of arrows. Hence $\Gamma$ also acts on $\fzmu_{[k]}(\widetilde Q)$. It remains a free action as we remove free orbits of arrows.

    The quiver $\fzmu_{[k]}(\widetilde Q)/\Gamma$ is defined to be the \emph{quotient quiver} of the action of $\Gamma$ on $\fzmu_{[k]}(\widetilde Q)$ where both vertex and arrow sets are orbit sets. The group $\Gamma$ again identifies with the deck transformation group $\mathrm{Deck}(\fzmu_{[k]}(p))$.
\end{proof}

\begin{remark}\label{remark: covering after orbit mutation}
    In general, the covering $\fzmu_{[k]}(p)$ may no longer be weakly admissible (\Cref{def: weakly admissible}) even though $p$ is assumed to be so. Namely, the quotient quiver $\fzmu_{[k]}(\widetilde Q)/\Gamma$ may have loops.
\end{remark}

\begin{example}\label{ex: covering 4 1}
    Consider covering $p\colon \widetilde Q \rightarrow Q$ in \Cref{fig: weak admin cover}. The orbit mutations at $[1]$ and $[2]$ are both simply reversing all arrows in $\widetilde Q$. In either orbit mutation, any $\Gamma$-loop is deleted due to deletion of $2$-cycles. So both orbit mutations are still weakly admissible.
    \begin{figure}[ht]
        \centering
        \begin{tikzpicture}[->, >=stealth', scale = 1.5]
            \node[] (1) at (-2, 0) {1};
            \node[] (2) at (-1, 1) {2};
            \node[] (3) at (-1, -1) {2};
            \node[] (4) at (0, 0) {1};
            \node[] (5) at (1, 1) {2};
            \node[] (6) at (1, -1) {2};
            \node[] (7) at (2, 0) {1};
            \node[] (8) at (-2.3, 0) {$\cdots$};
            \node[] (9) at (2.3, 0) {$\cdots$};
            \node[] (10) at (2.5, 0) {};
            \node[] (11) at (3.5, 0) {};
            \node[] (12) at (4, 0) {1};
            \node[] (13) at (5.5, 0) {2.};
            \node[] (15) at (-1, 1.4) {$\vdots$};
            \node[] (16) at (-1, -1.3) {$\vdots$};
            \node[] (17) at (1, 1.4) {$\vdots$};
            \node[] (18) at (1, -1.3) {$\vdots$};

            \path[]
            (1) edge[bend left = 25] node[fill = white, pos = 0.5] {$\alpha_1$} (2)
            (1) edge[bend right = 25] node[fill = white, pos = 0.5] {$\alpha_2$} (2)
            (2) edge[bend left = 20] node[fill = white, pos = 0.5] {$\gamma$} (4)
            (3) edge[bend left = 20] node[fill = white, pos = 0.5] {$\gamma$} (1)
            (4) edge[bend left = 25] node[fill = white, pos = 0.5] {$\beta_1$} (3)
            (4) edge[bend right = 25] node[fill = white, pos = 0.5] {$\beta_2$} (3)
            (4) edge[bend left = 25] node[fill = white, pos = 0.5] {$\alpha_1$} (5)
            (4) edge[bend right = 25] node[fill = white, pos = 0.5] {$\alpha_2$} (5)
            (5) edge[bend left = 20] node[fill = white, pos = 0.5] {$\gamma$} (7)
            (6) edge[bend left = 20] node[fill = white, pos = 0.5] {$\gamma$} (4)
            (7) edge[bend left = 25] node[fill = white, pos = 0.5] {$\beta_1$} (6)
            (7) edge[bend right = 25] node[fill = white, pos = 0.5] {$\beta_2$} (6)
            (10) edge node[above, pos = 0.5] {$p$} (11)
            (12) edge[bend left = 30] node[fill = white, pos = 0.5] {$\alpha_1$} (13)
            (12) edge[bend left = 70] node[fill = white, pos = 0.5] {$\alpha_2$} (13)
            (13) edge[] node[fill = white, pos = 0.5] {$\gamma$} (12)
            (12) edge[bend right = 30] node[fill = white, pos = 0.5] {$\beta_1$} (13)
            (12) edge[bend right = 70] node[fill = white, pos = 0.5] {$\beta_2$} (13)
            ;
        \end{tikzpicture}
    \caption{A globally weakly admissible covering.}
        \label{fig: weak admin cover}
    \end{figure}
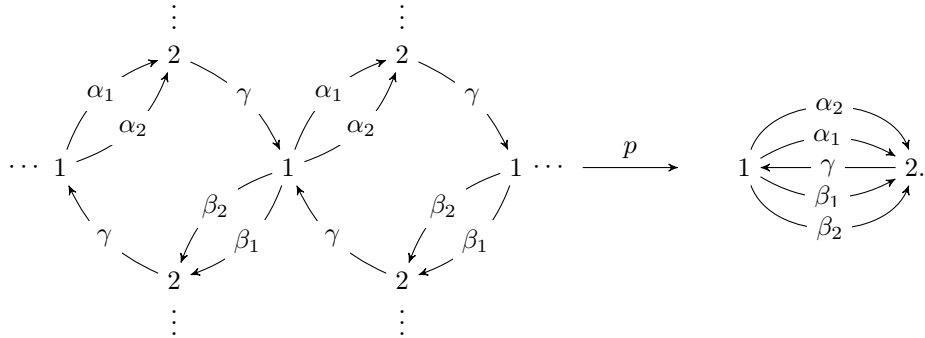
\end{example}

\begin{definition}\label{def: k mutable}
    We say that a weakly admissible covering $p \colon \widetilde Q \rightarrow Q$ is \emph{$[k]$-mutable} or \emph{mutable at} $k$ for $k\in Q_0$ if the covering (\Cref{prop: mutation of a covering})
    \[
         \fzmu_{[k]}(p) \colon \fzmu_{[k]}(\widetilde Q) \rightarrow \fzmu_{[k]}(\widetilde Q)/\Gamma
    \]
     is again weakly admissible. If the covering stays weakly admissible after any sequence of orbit mutations at any direction, we say that $p$ is \emph{global} or \emph{globally weakly admissible}.
\end{definition}

The covering in \Cref{ex: covering 4 1} is mutable at $[1]$ and $[2]$. Since both mutations are simply reversing all arrows, the covering $p$ is global.

\begin{example}
    The following covering is not mutable at either $[1]$ or $[2]$.
    \[
        \begin{tikzpicture}[->, >=stealth', scale = 0.75]
            \node[] (1) at (-2, 0) {1};
            \node[] (2) at (-1, {sqrt(3)}) {2};
            \node[] (3) at (1, {sqrt(3)}) {1};
            \node[] (4) at (2, 0) {2};
            \node[] (5) at (1, {-sqrt(3)}) {1};
            \node[] (6) at (-1, {-sqrt(3)}) {2};
            \node[] (t) at (3, 0) {};
            \node[] (h) at (5, 0) {};
            \node[] (p) at (4, 0.4) {$p$};
            \node[] (7) at (6, 0) {1};
            \node[] (8) at (8, 0) {2};

            \path[]
            (1) edge (2)
            (2) edge (3)
            (3) edge (4)
            (4) edge (5)
            (5) edge (6)
            (6) edge (1)
            (t) edge (h)
            (7) edge[bend left = 20] (8)
            (8) edge[bend left = 20] (7)
            ;
        \end{tikzpicture}
    \]
    For example, the covering $\fzmu_{[1]}(p)$ is
    \[
        \begin{tikzpicture}[->, >=stealth', scale = 0.75]
            \node[] (1) at (-2, 0) {1};
            \node[] (2) at (-1, {sqrt(3)}) {2};
            \node[] (3) at (1, {sqrt(3)}) {1};
            \node[] (4) at (2, 0) {2};
            \node[] (5) at (1, {-sqrt(3)}) {1};
            \node[] (6) at (-1, {-sqrt(3)}) {2};
            \node[] (t) at (3, 0) {};
            \node[] (h) at (5, 0) {};
            \node[] (p) at (4, 0.4) {$\fzmu_{[1]}(p)$};
            \node[] (7) at (6, 0) {1};
            \node[] (8) at (8, 0) {2};

            \path[]
            (1) edge (6)
            (2) edge (1)
            (3) edge (2)
            (4) edge (3)
            (5) edge (4)
            (6) edge (5)
            (t) edge (h)
            (7) edge[bend right = 20] (8)
            (8) edge[bend right = 20] (7)
            (8) edge[out = 60, in = 120, looseness = 8] (8)
            (6) edge (2)
            (2) edge (4)
            (4) edge (6)
            ;
        \end{tikzpicture}
    \]
\end{example}

We give below a sufficient condition for a covering being $[k]$-mutable.

\begin{lemma}\label{lemma: sufficient k-mutable}
    Let $p\colon \widetilde Q \rightarrow Q$ be a weakly admissible quiver covering. Let $k\in Q_0$. Suppose that for any $2$-cycle
    \[
        \begin{tikzcd}
            k \ar[r, bend left, "\beta"] & \bullet \ar[l, bend left, "\alpha"]
        \end{tikzcd}
    \]
    in $Q$, its square $(\beta\alpha)^2$ is in $p_*(\pi_1(\widetilde Q))$, then $p$ is $[k]$-mutable.
\end{lemma}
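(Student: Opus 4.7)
The plan is to verify both conditions of weak admissibility (\Cref{def: weakly admissible}) for the covering $\fzmu_{[k]}(p)$: that $\fzmu_{[k]}(\widetilde Q)$ remains $2$-acyclic, and that the quotient $\fzmu_{[k]}(\widetilde Q)/\Gamma$ is loop-free. The first condition I expect to come out almost for free: because $p$ is weakly admissible, no arrow of $\widetilde Q$ joins two vertices of $p^{-1}(k)$, so the FZ-mutations at individual vertices of $p^{-1}(k)$ pairwise commute, and their composite---together with the $\Gamma$-equivariant $2$-cycle cancellation provided by \Cref{lemma: remove 2 cycle orbit}---is $2$-acyclic by the standard involutivity argument for ordinary FZ-mutation.

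The substantive task will be checking loop-freeness of the quotient, which reduces to showing that the cancellation prescribed by \Cref{lemma: remove 2 cycle orbit} destroys every $\Gamma$-loop appearing in the pre-mutation $\prefzmu_{[k]}(\widetilde Q)$. I would first classify the $\Gamma$-loops: neither unchanged arrows of $\widetilde Q$ nor arrows incident to $p^{-1}(k)$ that were reversed contribute any, since $\widetilde Q$ itself has no $\Gamma$-loops by weak admissibility. So every $\Gamma$-loop is a newly created composite $[\tilde\beta\tilde\alpha]\colon s(\tilde\beta) \to t(\tilde\alpha)$ whose endpoints lie in a common fiber $p^{-1}(v)$; the projections $\alpha = p(\tilde\alpha)$ and $\beta = p(\tilde\beta)$ must then form a $2$-cycle at $k$ in $Q$ of exactly the shape appearing in the hypothesis.

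The crux of the argument, and the one step I expect to present any genuine difficulty, is to translate the homotopical hypothesis $(\beta\alpha)^2 \in p_*(\pi_1(\widetilde Q))$ into the orbit-theoretic criterion used in \Cref{lemma: remove 2 cycle orbit}. Invoking the Galois correspondence (\Cref{prop: galois correspondence}), the unique deck transformation $\sigma \in \Gamma$ sending $s(\tilde\beta)$ to $t(\tilde\alpha)$ represents the class of $\beta\alpha$ in $\pi_1(Q, v)/p_*(\pi_1(\widetilde Q))$, so the hypothesis becomes $\sigma^2 = \mathrm{id}$; equivalently, the unordered pair $\{s(\tilde\beta), t(\tilde\alpha)\}$ has non-free $\Gamma$-orbit. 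Under this condition, the prescription of \Cref{lemma: remove 2 cycle orbit} places every newly created arrow between these two vertices into its maximal collection of $2$-cycles and deletes it. Running this argument over each $2$-cycle at $k$ in $Q$ (to each of which the hypothesis applies) removes every $\Gamma$-loop from $\fzmu_{[k]}(\widetilde Q)$, so the quotient is loop-free and $p$ is $[k]$-mutable.
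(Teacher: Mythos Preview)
Your argument is correct and follows essentially the same route as the paper: classify the potential $\Gamma$-loops as new composites whose projection is a $2$-cycle at $k$, use the hypothesis to see that the deck transformation carrying one endpoint to the other is an involution, and then invoke the non-free-orbit case of \Cref{lemma: remove 2 cycle orbit} (which the paper reproves inline by exhibiting the explicit pairing $[\tilde\beta\tilde\alpha]\leftrightarrow[\tau(\tilde\beta)\tau(\tilde\alpha)]$) to cancel all such arrows. Two small remarks: the $2$-acyclicity of $\fzmu_{[k]}(\widetilde Q)$ is automatic once a maximal collection of $2$-cycles has been deleted, so no commutation argument is needed; and under the paper's right-to-left convention the composite $[\tilde\beta\tilde\alpha]$ runs from $s(\tilde\alpha)$ to $t(\tilde\beta)$, so your source/target labels (and hence the description of $\sigma$) are swapped, though this does not affect the logic.
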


\begin{proof}
    Let $\widetilde k\in p^{-1}(k)$ and consider any path
    \[
        \begin{tikzcd}
            i \ar[r, "\widetilde \alpha"] & \widetilde k \ar[r, "\widetilde \beta"] & j
        \end{tikzcd}
    \]
    such that $p(i) = p(j)\in Q_0$. Then there is a deck transformation $\tau$ corresponding to $\widetilde \beta \widetilde \alpha$. By the assumption $(p(\beta)p(\alpha))^2 \in p_*(\pi_1(\widetilde Q))$, we have that $\tau$ is an involution swapping $i$ and $j$. Applying $\tau$ to the above path, we obtain the square
    \[
        \begin{tikzcd}
            & \widetilde k \ar[rd, "\widetilde \beta"] & \\
            i \ar[ru, "\widetilde \alpha"] & & j \ar[dl, "\tau(\widetilde \alpha)"] \\
            & \tau(\widetilde k) \ar[lu, "\tau(\widetilde \beta)"] &
        \end{tikzcd}.
    \]
    Then any path $\widetilde \beta \widetilde \alpha$ from $i$ to $j$ through $\widetilde k$ is paired with the path $\tau(\widetilde \beta)\tau( \widetilde \alpha)$. Therefore any arrow created between $i$ and $j$ will be deleted in the orbit mutation $\fzmu_{[k]}$, meaning that $p$ is $[k]$-mutable.
\end{proof}

The covering in \Cref{ex: covering 4 1}, while globally weakly admissible, does not satisfy the condition in \Cref{lemma: sufficient k-mutable}.

\section{Quivers with homotopies}\label{section: quivers with homotopies}

In \Cref{subsec: quivers with homotopies} we introduce \emph{quivers with homotopies}, the central objects of study in this paper. Then we develop the theory of their \emph{mutations} in \Cref{subsec: mutations with homotopy,subsec: involution,subsec: patterns of QH,subsec: homotopy and orbit mutation}.

\subsection{Quivers with homotopies}\label{subsec: quivers with homotopies}

For any arrow $a\in Q_1$, denote by $a^{-1}$ the \emph{formal inverse} of $a$, that is, an opposite arrow with $s(a^{-1}) = t(a)$ and $t(a^{-1}) = s(a)$. Denote by $\overline Q$ the \emph{double quiver} of $Q$ where
\[
    \overline Q_1 = \{a^{-1}\mid a\in Q_1\}\cup Q_1, \quad \overline Q_0 = Q_0.
\]

\begin{definition}\label{def: cyclic walk}
    Let $Q$ be a quiver.
    \begin{enumerate}
        \item A \emph{path} $p$ in $Q$ is either \emph{non-trivial}, being a word of arrows $a_{n}a_{n-1}\cdots a_1$ such that $t(a_i) = s(a_{i+1})$ for $i = 1, \dots, n-1$ or a \emph{trivial path} $e_v$ for $v\in Q_0$. We define $s(p)=s(a_1)$, $t(p)=t(a_n)$ or $s(p)=t(p)=v$ respectively. When $s(p)=t(p)$, a non-trivial path $p$ is called an \emph{$n$-cycle}.

        \item When $s(p)=t(q)$ for paths $p$ and $q$, their \emph{composition} is (1) $pq$ if $p$ and $q$ are non-trivial; (2) $p$ if $q$ is trivial; (3) $q$ if $p$ is trivial.

        \item A \emph{walk} in $Q$ is a path in $\overline {Q}$. The \emph{reduction} of a walk is obtained by removing any sub-word $aa^{-1}$ or $a^{-1}a$ (the order of removal is irrelevant). A walk is called \emph{reduced} if it is identical to its reduction.

        \item A walk in $Q$ is called \emph{cyclic} if it is a cycle in $\overline{Q}$.
    \end{enumerate}
\end{definition}

A \emph{groupoid} is a category in which every morphism is an isomorphism. For a groupoid $G$, denote by $G(x, y)$ the set of morphisms from object $y$ to object $x$. We take the convention that morphisms compose from right to left, to align with the order of path concatenation of a quiver.

\begin{definition}\label{def: free groupoid of quiver}
    We define $\grp(Q)$ to be the \emph{free groupoid} generated by a quiver $Q$, that is, it has objects $Q_0$ and morphisms as reduced walks in $Q$ whose composition is the composition of paths in $\overline Q$ followed by a reduction. The identity of each object $v\in Q_0$ is the trivial path $e_v$.
\end{definition}

If $Q_0$ has only one element, then $\pi(Q)$ coincides with the free group generated by $Q_1$. 

It is worth noting that $\grp(Q)$ is isomorphic to the \emph{fundamental groupoid} $\pi_1(Q, Q_0)$ (viewing $Q$ as a 1-complex and $Q_0$ as a set of base points) by regarding any arrow $a$ as the homotopy class of paths traveling from $s(a)$ to $t(a)$ along $a$.

\begin{definition}[\cite{Brown}]
    A \emph{normal subgroupoid} (or a \emph{homotopy}) $H$ of a groupoid $G$ is a subcategory that contains all objects of $G$, and that itself is a groupoid such that 
    \begin{enumerate}
        \item $H(x, y) = \varnothing$ unless $x = y$;
        \item $g^{-1}H(x, x)g = H(y, y)$ for any $g\in G(x, y)$.
    \end{enumerate}
\end{definition}

In other words, a normal subgroupoid $H$ is simply a collection of normal subgroups $H(x, x)\subseteq G(x, x)$ for each object $x$ of $G$ that are conjugated by elements in $G(x, y)$. Thus if $G$ is connected, then any $H(x, x)$ will determine others through conjugation.

Let $S$ be a set of endomorphisms in $G$. Denote by $\ncl{S}$ the normal subgroupoid of $G$ generated by $S$, i.e., the smallest subgroupoid containing $S$.

For a morphism $f$ in $G$, by slight abuse of notation, we say that \emph{$f$ is in $H$} or write $f\in H$ if $f\in H(x,x)$ for some object $x$.

\begin{definition}
    The \emph{quotient groupoid} $G/H$ of a groupoid $G$ by a homotopy $H$ is the groupoid that has the same objects $\mathrm{Obj}(G)$ as $G$ does, and that $(G/H)(x,x) = G(x,x)/H(x,x)$ and $(G/H)(x, y)$ is the set of equivalent classes in $G(x, y)$ where $f$ is equivalent to $g$ (denoted as $f \sim_H g$) if $f^{-1}g\in H$.
\end{definition}

It is natural to consider homotopies on the groupoid $\pi(Q)$ of a quiver $Q$, see for example \cite[Section 2]{CP}. We present the following slightly more restrictive definition for the purpose of this paper.

\begin{definition}\label{def: quiver with homotopy}
    A \emph{quiver with homotopy} $(Q, H)$ is a loop-free quiver $Q$ with a normal subgroupoid (or a homotopy) $H$ of $\grp(Q)$. The homotopy $H$ is called \emph{reduced} if it does not contain any $2$-cycles in $Q$. The \emph{trivial homotopy} is denoted by $\trht$ where $\trht(v,v)=\{e_v\}$ for any $v\in Q_0$.
\end{definition}

Let $W$ be a finite set of reduced cyclic walks in $Q$ and $H = \ncl{W}$. Then the quotient groupoid $\pi(Q)/H$ has the following topological realization. For $w\in W$, let $P_w$ be a polygon whose boundary is realized by the cyclic walk $w$. Then we can glue $Q$ (viewed as a $1$-complex) with all $P_{w}$, $w\in W$ along their boundaries to obtain a $2$-complex $\Delta(Q, H, W)$ (when $W$ is non-empty).

\begin{lemma}\label{lemma: 2-complex quiver homotopy}
    The quotient groupoid $\pi(Q)/H$ is isomorphic to $\pi_1(\Delta(Q, H, W), Q_0)$.    
\end{lemma}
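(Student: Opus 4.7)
The plan is to reduce the statement to a standard application of the Seifert--van Kampen theorem for fundamental groupoids. First I would invoke the identification, noted in the paper just after \Cref{def: free groupoid of quiver}, between $\pi(Q)$ and the fundamental groupoid $\pi_1(Q, Q_0)$ of the $1$-complex $Q$ based at the set of vertices $Q_0$: this sends an arrow $a \in Q_1$ to the homotopy class of the obvious path from $s(a)$ to $t(a)$, extends to compositions of reduced walks, and is a groupoid isomorphism since $Q$ is a graph.

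Next I would unpack the construction of $\Delta = \Delta(Q, H, W)$ as a CW-attachment: for each $w \in W$, the polygon $P_w$ is a $2$-cell whose boundary is glued to $Q$ along the cyclic walk $w$, which is a loop at $s(w) = t(w) \in Q_0$. Then I would apply the groupoid version of the van Kampen theorem (for example as in R.~Brown's \emph{Topology and Groupoids}), which says that if a space $Y$ is obtained from $X$ by attaching $2$-cells along loops $\gamma_w$ based at points of a representative subset $A \subseteq X$, then the natural map $\pi_1(X, A) \to \pi_1(Y, A)$ is surjective and its kernel is the normal subgroupoid generated by the classes $[\gamma_w]$. In our case $A = Q_0$ contains every basepoint $s(w)$, so this yields a natural isomorphism
\[
    \pi_1(\Delta, Q_0) \;\cong\; \pi_1(Q, Q_0) / \ncl{W \mid w \in W}.
\]
Combining with the identification from the first paragraph and with the definition $H = \ncl{W}$, this rewrites as $\pi_1(\Delta, Q_0) \cong \pi(Q)/H$, as required. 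I would also note that the resulting isomorphism is the natural one: the class of a reduced walk $f$ in $\pi(Q)/H$ is sent to the homotopy class, in $\Delta$, of the corresponding path in $Q \subseteq \Delta$; this is well-defined precisely because, after attaching $P_w$, each $w$ becomes null-homotopic, so $H \subseteq \ker$.

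The argument is a direct appeal to a well-known theorem, so there is no serious obstacle; the only subtlety to be careful about is the use of \emph{groupoids} rather than based fundamental groups. This matters because the cyclic walks $w \in W$ may be based at different vertices of $Q_0$, so one cannot collapse everything to a single basepoint without first choosing a spanning tree and conjugating the relators, which would obscure the natural statement. Working with the full groupoid on the basepoint set $Q_0$ lets the relators $w$ appear as endomorphisms at their natural basepoints and matches exactly the definition of $H$ as a normal subgroupoid.
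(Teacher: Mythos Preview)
Your proposal is correct and follows essentially the same approach as the paper: both reduce the lemma to the groupoid version of van Kampen's theorem (citing Brown) for attaching $2$-cells along the cyclic walks in $W$. The only cosmetic difference is that the paper organizes the argument as an induction on $|W|$, attaching one $2$-cell at a time, whereas you invoke the attachment theorem for all of $W$ at once; the content is the same.
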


\begin{proof}
    We can work with each connected component of $Q$ and thus assume that $Q$ is connected. We prove by induction on $|W|$. The statement trivially holds when $W = \varnothing$. Then the induction is completed by showing the isomorphism when adding one more cyclic walk $w'$ to $W$:
    \[
        \pi_1(\Delta(Q, \ncl{W}, W), Q_0)/\ncl{w'} \cong \pi_1(\Delta(Q, \ncl{W\cup\{w'\}}, W\cup\{w'\}), Q_0).
    \]
    This follows from van Kampen's theorem; see also \cite{Brown} for a version in groupoids.
\end{proof}

\begin{example}\label{ex: 2 cycle}
    Let $Q$ be the quiver $\begin{tikzcd}
        x \ar[r, shift left, "a"] & y \ar[l, shift left, "b"]
    \end{tikzcd}$. Any non-trivial homotopy is of the form $H_k = \ncl{(ab)^k}$ for some $k\geq 1$. The $2$-complex $\Delta(Q, H_k, \{(ab)^k\})$ is homeomorphic to a disk when $k=1$ and to the real projective plane when $k=2$.
\end{example}

\subsection{Mutations of quivers with homotopies}\label{subsec: mutations with homotopy}

In this subsection we define \emph{mutations} of quivers with homotopies and prove some of their basic properties.

\begin{definition}[Pre-mutation]\label{def: pre-mutation}
    Let $(Q, H)$ be a quiver with homotopy where $H$ is reduced. The \emph{pre-mutation} $\tilde{\mu}_k(Q,H)$ of $(Q, H)$ in direction $k\in Q_0$ is defined to be the quiver with homotopy $(Q', H')$ where $Q' = \prefzmu_k(Q)$ and $H'$ is constructed via the following process.
    \begin{enumerate}

        \item Let $\varphi$ be the functor from $\grp(Q')$ to $\grp(Q)/H$ given by identity on vertices and
        \[
            \alpha^\star \mapsto \alpha^{-1},\quad \beta^\star\mapsto \beta^{-1},\quad [\beta\alpha] \mapsto \beta\alpha \quad \text{(and $\gamma \mapsto \gamma$ for all rest arrows in $Q'$)}.
        \]
        \item Define $H' \coloneqq \ker \varphi$, that is, for any $i\in Q_0$
        \[
            H'(i, i) = \ker \left(\pi(Q')(i,i) \xrightarrow{\ \varphi\ } \pi(Q)(i,i)/H(i, i) \right).
        \]
        It is a normal subgroupoid of $\grp(Q')$. Note that $\alpha^\star\beta^\star[\beta\alpha]$ is in $H'$.
    \end{enumerate}
\end{definition}

\begin{definition}[Deletion of $2$-cycles]\label{def: delete 2-cycles}
    For any two distinct vertices $i,j$ (other than $k$) in $Q_0$, consider the sets of arrows between them:
    \begin{itemize}
        \item Let $Q'_1(i, j) = \{\gamma_1, \dots, \gamma_N\}$ be the set of arrows from $j$ to $i$.
        \item Let $Q'_1(j, i) = \{\delta_1, \dots, \delta_M\}$ be the set of arrows from $i$ to $j$.
    \end{itemize}
    The arrows in these sets are listed in an arbitrary order. We now modify the quiver $Q'$ by iteratively removing certain pairs of arrows that form 2-cycles.
    
    \begin{enumerate}
        \item Start with the first arrow $\gamma_1 \in Q'_1(i, j)$. Identify the smallest index $r$ such that the composition $\gamma_1 \delta_r$ is in $H'$. If such an index exists, delete both $\gamma_1$ and $\delta_r$ from their respective sets.
        \item Proceed to the next remaining arrow $\gamma_p$ in $Q'_1(i, j)$ and repeat the process: find the smallest index $q$ such that $\gamma_p \delta_q \in H'$ and remove both $\gamma_p$ and $\delta_q$.
        \item Continue this process until all arrows in $Q'_1(i, j)$ have been considered.
    \end{enumerate}
    
    After completing this procedure for all pairs of vertices $(i, j)$, we obtain a new quiver $Q^\dagger$, which has the same vertex set as $Q'$ but with certain 2-cycles removed.
\end{definition}

\begin{definition}[Mutation]\label{def: mutation}
    The \emph{mutation} $\mu_k(Q, H)$ of $(Q, H)$ at direction $k$ is defined to be the quiver with reduced homotopy $(Q^\dagger, H^\dagger)$ where $H^\dagger \coloneqq \ker \psi$ where $\psi\colon \pi(Q^\dag) \rightarrow \pi(Q')/H'$ is the functor induced by the inclusion $Q^\dag \subseteq Q'$.
\end{definition}

Several fundamental properties of mutations are in order.

\begin{lemma}\label{lemma: surjective grp map}
    The functor $\psi\colon \grp(Q^\dagger) \rightarrow \grp(Q')/H'$ is full, i.e., the maps on morphisms are surjective.
\end{lemma}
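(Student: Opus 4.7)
The plan is to show fullness by verifying that every arrow of $Q'$, when regarded as a morphism in $\grp(Q')/H'$, lies in the image of $\psi$. Once this ``arrow-by-arrow'' lifting is in place, a general walk $w=c_n\cdots c_1$ in $\overline{Q'}$ is rewritten letter by letter into an equivalent walk $\tilde{w}$ in $\overline{Q^\dagger}$, and fullness follows from $\psi([\tilde w])=[w]$ (passing to formal inverses is automatic once the arrows themselves are lifted).

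First I would observe that the deletion procedure (\Cref{def: delete 2-cycles}) is only carried out between pairs of vertices distinct from $k$, so every $\star$-arrow incident to $k$ automatically survives in $Q^\dagger$. This immediately handles the ``easy'' arrows of $Q'$: the arrows of $Q$ not incident to $k$ that survived deletion, and all $\star$-arrows incident to $k$, can be taken as their own lifts.

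Next I would treat the remaining two classes. For a composite arrow $c=[\alpha\beta]$ with $\alpha\colon k\to t(\alpha)$ and $\beta\colon s(\beta)\to k$, I propose the lift $\tilde c=(\alpha^\star)^{-1}(\beta^\star)^{-1}$, a walk in $Q^\dagger$ from $s(\beta)$ to $t(\alpha)$ routed through $k$. The identity $\varphi(\tilde c)=\alpha\beta=\varphi(c)$ together with $H'=\ker\varphi$ gives $[\tilde c]=[c]$ in $\grp(Q')/H'$, regardless of whether $c$ was deleted. For a deleted arrow $c=a\in Q_1$ not incident to $k$, the deletion procedure furnishes a partner $\delta$ with $a\delta\in H'$. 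Reducedness of $H$ prevents $\delta$ from being another ordinary arrow of $Q$ (otherwise $\varphi(a\delta)=a\delta$ would exhibit a $2$-cycle of $Q$ lying in $H$), so $\delta=[\alpha'\beta']$ must be a composite. Then $\varphi(a\delta)=a\alpha'\beta'\in H$ yields $a=(\beta')^{-1}(\alpha')^{-1}$ in $\grp(Q)/H$, and the walk $\tilde a=(\beta')^\star(\alpha')^\star$ in $Q^\dagger$, again routed through $k$, realizes this class.

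The main point requiring care is this last sub-case: one must exclude, via reducedness of $H$, the possibility that the deletion-partner of an ordinary arrow of $Q$ is itself an ordinary arrow of $Q$. Once that dichotomy is secured, what remains is a routine unwinding of \Cref{def: pre-mutation} and \Cref{def: delete 2-cycles} together with the defining identity $H'=\ker\varphi$, and the assembly of the per-arrow lifts into a walk-level statement is then a formality because walks in $\overline{Q^\dagger}$ are closed under composition and formal inversion.
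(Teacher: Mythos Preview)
Your proposal is correct and follows essentially the same approach as the paper's own proof: both hinge on the observation that in any deleted pair, reducedness of $H$ forces at least one member to be a composite $[\alpha\beta]$, whence both deleted arrows can be rewritten in $\grp(Q')/H'$ using the surviving $\star$-arrows incident to $k$. Your case-by-case enumeration is slightly more explicit than the paper's terse treatment, but the key idea and the lifts are identical.
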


\begin{proof}
    When certain $\gamma_s \delta_r$ is in $H'$ and gets deleted in the process described in \Cref{def: delete 2-cycles}, at least one of $\gamma_s$ and $\delta_r$ must be of the form $[\beta\alpha]$. Otherwise $\gamma_s \delta_r$ would have been in $H$, which is impossible as $H$ is assumed to be reduced. Suppose that $\delta_r = [\beta \alpha]$ for some $\beta \colon k \rightarrow  j$ and $\alpha \colon i \rightarrow k$. Then we can express 
    \[
        \gamma_s = \alpha^\star\beta^\star \quad \text{and} \quad \delta_r = (\beta^\star)^{-1}(\alpha^\star)^{-1}
    \]
    in $\grp(Q')/H'$. Notice that $\alpha^\star$ and $\beta^\star$ are in $Q_1^\dagger$. So every morphism in $\grp(Q')/H'$ has a lift in $\grp(Q^\dag)$.
\end{proof}

The next lemma shows that the mutation $\mu_k(Q, H)$ is independent of the choice of deletion in \Cref{def: delete 2-cycles} up to quiver isomorphisms.

\begin{lemma}\label{lemma: unique reduction up to iso}
    Let $Q^{\dagger, 1}$ and $Q^{\dagger, 2}$ be two quivers obtained by deleting 2-cycles (with possibly different choices of deletion) from the pre-mutation $Q'$ as in \Cref{def: delete 2-cycles}. Then there exists an isomorphism $\eta \colon Q^{\dagger, 1} \rightarrow Q^{\dagger, 2}$ (fixing the vertex set) such that the diagram of groupoids
    \[
        \begin{tikzcd}[column sep = tiny]
            \grp(Q^{\dagger,1}) \ar[rr, "\eta"] \ar[dr, swap, "\psi_1"] & & \grp(Q^{\dagger, 2}) \ar[dl, "\psi_2"] \\
            & \grp(Q')/H' &
        \end{tikzcd}
    \]
    commutes, where $\psi_1$ and $\psi_2$ denote the surjective maps in \Cref{lemma: surjective grp map}.
\end{lemma}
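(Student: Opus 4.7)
The plan is to reduce to an independent combinatorial analysis on each unordered pair of vertices and to construct $\eta$ class by class under the equivalence $\sim_{H'}$. Since the deletion procedure in \Cref{def: delete 2-cycles} only modifies arrow sets $Q'_1(i,j)$ and $Q'_1(j,i)$ for pairs $\{i,j\}$ with $i,j \neq k$, every arrow of $Q'$ incident to $k$ lies in both $Q^{\dagger,1}$ and $Q^{\dagger,2}$; I would set $\eta$ to be the identity on these. It then remains to build $\eta$ between the corresponding arrow sets for each fixed pair $\{i,j\} \subset Q_0 \setminus \{k\}$.

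Fix such a pair and write $A = Q'_1(i,j)$, $B = Q'_1(j,i)$. The key structural observation is that the compatibility relation $\{(\gamma,\delta) \in A \times B : \gamma\delta \in H'\}$ decomposes as a disjoint union of complete bipartite blocks under the equivalence $\sim_{H'}$: if $\gamma\delta \in H'$ then $\delta \sim_{H'} \gamma^{-1}$, so compatibility depends only on the classes $[\gamma]$ and $[\delta]$, and each class in $A$ pairs with at most one class in $B$. Within a single paired block of sizes $(m,n)$, the greedy procedure of \Cref{def: delete 2-cycles} removes exactly $\min(m,n)$ pairs irrespective of the chosen ordering, since removing one element from each side of a complete bipartite block leaves a complete bipartite block; and because pairings cannot cross blocks, the total count of surviving arrows in each $\sim_{H'}$-class is the same for $Q^{\dagger,1}$ and $Q^{\dagger,2}$.

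With matching counts per class in hand, for each $\sim_{H'}$-class I would choose an arbitrary bijection between its representatives in $Q^{\dagger,1}$ and those in $Q^{\dagger,2}$. Combined with the identity on vertices and on arrows incident to $k$, these bijections assemble into a quiver isomorphism $\eta\colon Q^{\dagger,1} \to Q^{\dagger,2}$. Commutativity of the triangle is then verified on generators: by construction $\eta(\gamma) \sim_{H'} \gamma$ for every arrow $\gamma$ of $Q^{\dagger,1}$, hence $\psi_2(\eta(\gamma)) = \psi_1(\gamma)$ in $\grp(Q')/H'$; the equality extends to all morphisms by functoriality.

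The step requiring the most care is the block decomposition together with the resulting order-independence of the greedy deletion. The block structure itself is a direct consequence of $H'$ being a normal subgroupoid, but the order-independence demands an explicit check that partial deletions inside one block do not interfere with pairings in other blocks, and that the interleaving of arrows from different classes in the enumeration fixed by \Cref{def: delete 2-cycles} does not affect the final counts.
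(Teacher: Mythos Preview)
Your proposal is correct and follows essentially the same approach as the paper. The paper's proof organizes the arrows between $i$ and $j$ into paired subsets $G_p \subset Q'_1(i,j)$ and $D_p \subset Q'_1(j,i)$ characterized by $\gamma\delta \in H'$ for all $(\gamma,\delta)\in G_p\times D_p$; these are exactly your complete bipartite blocks under $\sim_{H'}$, and the paper likewise observes that the deletion removes the same number from each block regardless of order, then defines $\eta$ by arbitrary bijections within blocks and verifies commutativity from $\gamma_1 \sim_{H'} \gamma_2$.
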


\begin{proof}
    Fix two different vertices $i$ and $j$ that are not $k$. There are disjoint non-empty subsets $G_1, \dots, G_L$ of $Q'_1(i,j)$ and $D_1, \dots, D_L$ of $Q'_1(j,i)$ such that for any $p = 1, \dots, L$, any $\gamma\in G_p$ and any $\delta\in D_p$, we have $\gamma\delta\in H'$, and $\gamma\delta'\notin H'$ for any $\delta'\in Q'_1(j, i)\setminus D_p$, and $\gamma'\delta\notin H'$ for any $\gamma'\in Q'_1(i, j)\setminus G_p$.
    
    The deletion process described in \Cref{def: delete 2-cycles} is simply removing a same number of arrows in $G_p$ and $D_p$ such that there is no remaining arrow in one of them. We denote the sets of remaining arrows by $G_p^v$ and $D_p^v$ (one of them is empty) for $Q^{\dagger, v}$ with $v = 1, 2$. Of course $|G_p^1| = |G_p^2|$ and $|D_p^1| = |D_p^2|$ for any $p$. The restriction of $\eta$ on $G_p^1$ to $G_p^2$ (and on $D_p^1$ to $D_p^2$) is an arbitrarily chosen bijection.

    Let $\eta$ be identity on any arrow that is not in any $G_p$ or $D_p$ for any $i$ and $j$. Now by construction it is clear that $\eta$ is an isomorphism on quivers.

    Notice that $\psi_v$ in the above diagram denotes the functor in \Cref{lemma: surjective grp map} induced by the natural inclusion $\iota_v \colon Q^{\dagger, v}_1 \hookrightarrow Q'_1$ for $v = 1, 2$. To see the diagram commutes, we only need to show $\psi_1(\gamma_1) = \psi_2(\gamma_2)$ if $\eta(\gamma_1) = \gamma_2$ for $\gamma_1\in G_p^1$ and $\gamma_2\in G_p^2$ (and the same for $D_p^1$ and $D_p^2$). In fact, $\gamma_1$ and $\gamma_2$ are in the same $G_p\subseteq Q'_1$. Thus there exists some $\delta$ such that both $\gamma_1\delta$ and $\gamma_2\delta$ are in $H'$, which means $\gamma_1$ and $\gamma_2$ are equivalent with respect to $H'$ (i.e. $\gamma_1\gamma_2^{-1}\in H'$). Thus $\psi_1(\gamma_1) = \psi_2(\gamma_2)$ in $\grp(Q')/H'$.
\end{proof}

\begin{remark}\label{remark: quotient groupoid invariant}
    Tracking through the construction of $\mu_k$, we have functors $\varphi$ and $\psi$
    \[
        \begin{tikzcd}
            \grp(Q^\dagger)/H^\dagger \ar[r, "\psi"] & \grp(Q')/H' \ar[r, "\varphi"] & \grp(Q)/H.
        \end{tikzcd}
    \]
    It follows from the construction and \Cref{lemma: surjective grp map} that both $\varphi$ and $\psi$ are isomorphisms. Therefore the quotient groupoid is preserved along mutations.
    By \Cref{lemma: unique reduction up to iso}, the quiver with homotopy $\mu_k(Q, H)$ is unique up to an isomorphism $\eta$ on quivers that translates homotopies, i.e., $\eta(H^{\dagger, 1}) = H^{\dagger, 2}$ where $H^{\dagger, v}$ denotes $\ker \psi_v$ for $v = 1, 2$ as in \Cref{lemma: unique reduction up to iso}.
\end{remark}

\begin{example}\label{ex: first ex of mutation homotopy}
Consider the quiver $Q$ on the left.
    \[
        \begin{tikzpicture}[->, >=stealth', scale = 0.75]
        \begin{scope}[shift={(0,0)}]
            \node[] (Q) at (-3, 1) {$Q\ = $};
            \node[] (1) at (-2, 0) {1};
            \node[] (2) at (0, {sqrt(12)}) {2};
            \node[] (3) at (2, 0) {3};

            \path[]
            (1) edge[bend right = 20] node [fill = white, pos = 0.5] {$\alpha_2$} (2)
            (1) edge[bend left = 20] node [fill = white, pos = 0.5] {$\alpha_1$} (2)
            (2) edge[bend right = 20] node [fill = white, pos = 0.5] {$\beta_1$} (3)
            (2) edge[bend left = 20] node [fill = white, pos = 0.5] {$\beta_2$} (3)
            (3) edge[bend right = 20] node [fill = white, pos = 0.5] {$\gamma_4$} (1)
            (3) edge[] node [fill = white, pos = 0.5] {$\gamma_3$} (1)
            (3) edge[bend left = 20] node [fill = white, pos = 0.5] {$\gamma_2$} (1)
            (3) edge[bend left = 40] node [fill = white, pos = 0.5] {$\gamma_1$} (1);
        \end{scope}

        \draw[->] (4,1) to node[above]{$\mu_2$} (5,1); 

        \begin{scope}[shift={(9,0)}]
            \node[] (Qdag) at (3, 1) {$=\ Q^\dag$};
            \node[] (1) at (-2, 0) {1};
            \node[] (2) at (0, {sqrt(12)}) {2};
            \node[] (3) at (2, 0) {3};

            \path[]
            (2) edge[bend right = 20] node [fill = white, pos = 0.5] {$\alpha_1^\star$} (1)
            (2) edge[bend left = 20] node [fill = white, pos = 0.5] {$\alpha_2^\star$} (1)
            (3) edge[bend right = 20] node [fill = white, pos = 0.5] {$\beta_2^\star$} (2)
            (3) edge[bend left = 20] node [fill = white, pos = 0.5] {$\beta_1^\star$} (2)
            (3) edge[bend right = 20] node [fill = white, pos = 0.5] {$\gamma_4$} (1)
            (3) edge[] node [fill = white, pos = 0.5] {$\gamma_2$} (1)
            (1) edge[bend right = 25] node [fill = white, pos = 0.5] {$[\beta_1\alpha_2]$} (3)
            (1) edge[bend right = 60] node [fill = white, pos = 0.5] {$[\beta_2\alpha_1]$} (3);
        \end{scope}
        \end{tikzpicture} 
    \]
    Let $H = \ncl{\gamma_1 \beta_1 \alpha_1, \gamma_2 \beta_1 \alpha_1, \gamma_3 \beta_2 \alpha_2}$. After the pre-mutation $\tilde \mu_2$, we have
    \[
        Q'_1(3,1) = \{[\beta_1\alpha_1],[\beta_2\alpha_2],[\beta_1\alpha_2],[\beta_2\alpha_1]\} \quad \text{and} \quad Q'_1(1, 3) = Q_1(1, 3).
    \]
    The $2$-cycles $\gamma_1[\beta_1\alpha_1], \gamma_2[\beta_1\alpha_1], \gamma_3[\beta_2\alpha_2]$ are in $H'$ because their images are in $H$. We can first delete the pair $(\gamma_1, [\beta_1\alpha_1])$. With $[\beta_1\alpha_1]$ already removed, the edge $\gamma_2$ remains. Then $(\gamma_3, [\beta_2\alpha_2])$ gets deleted. Upon this choice, we have $Q^\dag_1(3, 1) = \{[\beta_1\alpha_2], [\beta_2\alpha_1]\}$ and $Q^\dag_1(1, 3) = \{\gamma_2, \gamma_4\}$. The homotopy $H^\dag$ is generated by
    \[
        \{\alpha_2^\star\beta_1^\star[\beta_1\alpha_2], \alpha_1^\star\beta_2^\star[\beta_2\alpha_1], \gamma_2(\beta_1^\star)^{-1}(\alpha_1^\star)^{-1}\}.
    \]
\end{example}

\begin{example}\label{ex: one mutation of Markov}
    Consider the Markov quiver $Q$ below.
    \[
        \begin{tikzpicture}[->, >=stealth', scale = 0.75]
            \node[] (1) at (-2, 0) {1};
            \node[] (2) at (0, {sqrt(12)}) {2};
            \node[] (3) at (2, 0) {3};

            \path[]
            (1) edge[bend right = 20] node [fill = white, pos = 0.5] {$\alpha_2$} (2)
            (1) edge[bend left = 20] node [fill = white, pos = 0.5] {$\alpha_1$} (2)
            (2) edge[bend right = 20] node [fill = white, pos = 0.5] {$\beta_1$} (3)
            (2) edge[bend left = 20] node [fill = white, pos = 0.5] {$\beta_2$} (3)
            (3) edge[bend left = 20] node [fill = white, pos = 0.5] {$\gamma_1$} (1)
            (3) edge[bend right = 20] node [fill = white, pos = 0.5] {$\gamma_2$} (1);
        \end{tikzpicture}
    \]
    We consider the following homotopies.
    \begin{enumerate}
        \item Let $H_1$ be any homotopy that does not contain a $3$-cycle, for example, the trivial homotopy. Clearly no $2$-cycle will be deleted in any of the three mutations $\mu_1$, $\mu_2$ and $\mu_3$.
        
        \item Let $H_2 = \ncl{\gamma_1\beta_1\alpha_1, \gamma_2\beta_1\alpha_1}$. Exact one $2$-cycle is deleted in any of the three mutations. In $\mu_2$, the deletion is similar to the deletion of the pair $(\gamma_1, [\beta_1\alpha_1])$ in the previous \Cref{ex: first ex of mutation homotopy}.

        \item Let $H_3 = \ncl{\gamma_1\beta_1\alpha_1, \gamma_2\beta_1\alpha_2}$. Two $2$-cycles $(\gamma_1, [\beta_1\alpha_1])$ and $(\gamma_2, [\beta_1\alpha_2])$ are deleted in $\mu_2$. Similarly two $2$-cycles get deleted in $\mu_3$. One $2$-cycle $(\beta_1, [\alpha_1\gamma_1])$ or $(\beta_1, [\alpha_2\gamma_2])$ is deleted in $\mu_1$.
        
        \item Let $H_4 = \ncl{\langle \gamma_1\beta_1\alpha_1, \gamma_2\beta_2\alpha_2}$. Two $2$-cycles are deleted in any of the three mutations.
    \end{enumerate}
\end{example}

\begin{example}\label{example of mutation 1}
    Let $(Q, H)$ be
    \begin{equation*}
        Q = \xymatrix@C=1.5cm{1\ar@/^0.2pc/[r]^{a} & 2\ar@/^0.2pc/[l]^{d}\ar@/^0.2pc/[r]^{b} & 3\ar@/^0.2pc/[l]^{c}}, \quad H = \ncl{dcba}.
    \end{equation*}
    The mutation at $1$ is
    \[
        \mu_1(Q, H) = \left( \xymatrix@C=1.5cm{1\ar@/^0.2pc/[r]^{d^*} & 2\ar@/^0.2pc/[l]^{a^*}\ar@/^0.2pc/[r]^{b} & 3\ar@/^0.2pc/[l]^{c}}, \quad \ncl{ (d^*)^{-1}cb(a^*)^{-1}} \right).
    \]
    The pre-mutation at $2$ is $\tilde \mu_2(Q, H) = (Q', H')$ with
    \begin{equation*}
        Q' = \xymatrix@C=1.5cm{1 \ar@/^0.2pc/[r]^{d^*} \ar@/^1.5pc/[rr]^{[ba]} & 2 \ar@/^0.2pc/[l]^{a^*} \ar@/^0.2pc/[r]^{c^*} & 3 \ar@/^1.5pc/[ll]^{[dc]} \ar@/^0.2pc/[l]^{b^*}
        }, \quad H' = \ncl{[ba]a^*b^*, [dc]c^*d^*, [ba][dc]}.
    \end{equation*}
   After deleting 2-cycles, we have $\mu_2(Q, H) = (Q^\dagger, H^\dagger)$ with
    \begin{equation*}
        Q^\dagger = \xymatrix@C=1.5cm{1 \ar@/^0.2pc/[r]^{d^*} & 2 \ar@/^0.2pc/[l]^{a^*} \ar@/^0.2pc/[r]^{c^*} & 3 \ar@/^0.2pc/[l]^{b^*}
        }, \quad H^\dagger = \ncl{a^*b^*c^*d^*}.
    \end{equation*}
    By the obvious isomorphism between $(Q^\dagger, H^\dagger)$ and $(Q, H)$, we see $\mu_2^2(Q, H) = (Q, H)$.
\end{example}

\begin{example}\label{example of mutation 2}
    For the same quiver $Q$ as in \Cref{example of mutation 1}, consider instead the trivial homotopy $\trht$. In this case the $2$-cycle formed by $[ba]$ and $[dc]$ in the pre-mutation $\tilde \mu_2(Q, \trht)$ will not be deleted. Then $\mu_2(Q, \trht)$ is
    \begin{equation*}
        Q' = \xymatrix@C=1.5cm{1 \ar@/^0.2pc/[r]^{d^*} \ar@/^1.5pc/[rr]^{[ba]} & 2 \ar@/^0.2pc/[l]^{a^*} \ar@/^0.2pc/[r]^{c^*} & 3 \ar@/^1.5pc/[ll]^{[dc]} \ar@/^0.2pc/[l]^{b^*}
        }, \quad H'' = \ncl{[ba]a^*b^*, [dc]c^*d^*}.
    \end{equation*}
    It is not hard to verify that $\mu_2(Q', H'') = (Q, \trht)$. Then by the rotational symmetry of $(Q', H'')$, we see that $\mu_3(Q', H'')$ and $\mu_1(Q', H'')$ are both isomorphic to $(Q, \trht)$. Therefore we have understood quivers with homotopies obtained from any mutation sequences from $(Q, \trht)$.
\end{example}

\subsection{Mutation is an involution}\label{subsec: involution}

The FZ-mutation of $2$-acyclic quivers is involutive. This simple combinatorial feature, however, is crucial for the construction of cluster algebras associated to $2$-acyclic quivers. For quivers with homotopies, we show that mutation is still involutive.

\begin{proposition}\label{prop: mutation involutive}
    Let $(Q, H)$ be a quiver with reduced homotopy. Denote $(Q^\heartsuit, H^\heartsuit) = \mu_k^2(Q, H)$ for $k\in Q_0$. Then there is a quiver isomorphism $\eta \colon Q^\heartsuit \rightarrow Q$ fixing $Q^\heartsuit_0 = Q_0$ that induces an isomorphism from $\eta(H^\heartsuit)$ to $H$.
\end{proposition}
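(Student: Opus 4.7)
The plan is to construct the vertex-fixing quiver isomorphism $\eta$ arrow by arrow, handling arrows at $k$ and arrows between the remaining vertices separately, then use the invariance of the quotient groupoid (\Cref{remark: quotient groupoid invariant}) to transport $H^\heartsuit$ to $H$. Write $(Q', H') = \tilde\mu_k(Q, H)$, $(Q^\dag, H^\dag) = \mu_k(Q, H)$, $(Q^{\dag\prime}, H^{\dag\prime}) = \tilde\mu_k(Q^\dag, H^\dag)$ and $(Q^\heartsuit, H^\heartsuit) = \mu_k(Q^\dag, H^\dag)$. The vertex sets are all identified with $Q_0$, so $\eta$ will be the identity on vertices.

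For arrows incident to $k$, the pre-mutation reverses every $\alpha \in Q_1$ with $s(\alpha) = k$ and every $\beta \in Q_1$ with $t(\beta) = k$, and it creates no arrow at $k$. Applying this twice yields $(\alpha^\star)^\star$ and $(\beta^\star)^\star$ with the same source and target as the original $\alpha, \beta$. Since the deletion step of \Cref{def: delete 2-cycles} never removes pairs at $k$, these doubly reversed arrows survive into $Q^\heartsuit$; I set $\eta((\alpha^\star)^\star) = \alpha$ and $\eta((\beta^\star)^\star) = \beta$.

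The heart of the argument is the analysis of arrows between two vertices $i, j$ distinct from $k$. I classify each arrow in $Q^{\dag\prime}$ between $i$ and $j$ by its image in $\grp(Q)/H$ under the three-step quotient-groupoid isomorphism, obtaining $[\alpha\beta] \mapsto \alpha\beta$, $\delta \mapsto \delta$, and $[\beta^\star\alpha^\star] \mapsto (\alpha\beta)^{-1}$. From this I enumerate the 2-cycles lying in $H^{\dag\prime}$: a pair $([\beta^\star\alpha^\star], [\alpha\beta])$, or symmetrically $([\alpha'\beta'], [\beta'^\star\alpha'^\star])$, is always a 2-cycle in $H^{\dag\prime}$ whenever both arrows are present; more subtly, for any composite--composite pair $([\alpha'\beta'], [\alpha\beta])$ deleted in the first mutation (so that $\alpha'\beta'\alpha\beta \in H$), the associated brand-new pair $([\beta^\star\alpha^\star], [\beta'^\star\alpha'^\star])$ in $Q^{\dag\prime}$ is again a 2-cycle in $H^{\dag\prime}$. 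A direct count of these three kinds of pairs shows that the second deletion can be chosen so that every new composite $[\beta^\star\alpha^\star]$ or $[\beta'^\star\alpha'^\star]$ is either removed, or paired off so that it replaces exactly one original arrow of $Q$ that was deleted in the first mutation against a first-mutation composite. On the remaining arrows of $Q^\heartsuit$, I set $\eta$ to be the identity on originals surviving both deletions, and to send each surviving new composite to the deleted original it replaces.

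To finish, I verify $\eta(H^\heartsuit) = H$ by observing that under the chain of isomorphisms of \Cref{remark: quotient groupoid invariant} each surviving arrow of $Q^\heartsuit$ represents the same class in $\grp(Q)/H$ as its image under $\eta$: in the replacement case, an original $\delta$ being deleted against $[\alpha\beta]$ in the first mutation forces $\delta \cdot \alpha\beta \in H$, so $\delta$ and $(\alpha\beta)^{-1}$ represent the same class in $\grp(Q)/H$. Hence $\eta$ realises the composed isomorphism $\grp(Q^\heartsuit)/H^\heartsuit \xrightarrow{\sim} \grp(Q)/H$, and so $\eta(H^\heartsuit) = H$. \Cref{lemma: unique reduction up to iso} then ensures that an alternative admissible choice of deletions produces a quiver with homotopy isomorphic to $(Q^\heartsuit, H^\heartsuit)$, and hence also to $(Q,H)$. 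The main obstacle is the combinatorial bookkeeping just outlined: one must simultaneously track the three possible types of first-deletion pairs (composite--composite, composite--original, original--composite) and verify that the new second-pre-mutation composites in $Q^{\dag\prime}$ always pair up with suitable 2-cycle partners. The exotic 2-cycle between two brand-new composites arising from a composite--composite first deletion is the key observation that makes the arrow count balance.
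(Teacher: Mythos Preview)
Your proposal is correct and follows essentially the same approach as the paper's proof. Both arguments track the three types of first-deletion pairs (original--composite, composite--original, composite--composite), observe that second-pre-mutation composites $[\beta^\star\alpha^\star]$ pair with surviving first composites $[\alpha\beta]$ and, crucially, that a composite--composite first deletion $([\alpha'\beta'],[\alpha\beta])$ yields a deletable $2$-cycle $([\beta^\star\alpha^\star],[\beta'^\star\alpha'^\star])$ in $H^{\dag\prime}$; both then define $\eta$ as the identity on surviving originals and as the first-deletion pairing on the remaining new composites, and verify $\eta(H^\heartsuit)=H$ via the quotient-groupoid invariance of \Cref{remark: quotient groupoid invariant}. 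The paper's notation $(\cdot)^{\mathrm{rem}}$, $(\cdot)^{\mathrm{del}}$, $S(i,j)$ makes the bookkeeping more explicit, and it states (as you should also check) that after the specified deletions no further $2$-cycle lies in $H^{\dag\prime}$, so the chosen deletion is indeed maximal; otherwise the arguments coincide.
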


\begin{proof}
    Let $(Q', H')$ denote the pre-mutation $\tilde \mu_k(Q, H)$. Let $i$ and $j$ be two different vertices other than $k$. We have by definition that
    \[
        Q'_1(j, i) = Q_1(j, i) \sqcup (Q_1(j, k) \times Q_1(k, i)).
    \]
    Fix a (maximal) deletion of 2-cycles (as in \Cref{def: delete 2-cycles}) from $(Q', H')$ to $(Q^\dagger, H^\dagger)$: denote the deleted arrows by the superscript $(\cdot)^{\mathrm{del}}$ and the ones remaining by $(\cdot)^{\mathrm{rem}}$. So we have
    \[
        Q^\dagger_1(j, i) = Q_1(j, i)^{\mathrm{rem}} \sqcup (Q_1(j, k) \times Q_1(k, i))^{\mathrm{rem}}.
    \]
    Consider the pre-mutation $(Q^\spadesuit, H^\spadesuit) \coloneqq \tilde \mu_k(Q^\dagger, H^\dagger)$. We have
    \[
        Q^\spadesuit_1(j, i) = Q_1(j, i)^{\mathrm{rem}} \sqcup (Q_1(j, k) \times Q_1(k, i))^{\mathrm{rem}} \sqcup (Q^\dagger_1(j, k) \times Q^\dagger_1(k, i)),
    \]
    where $Q^\dagger_1(j, k)$ can be identified with $Q_1(k, j)$ and $Q^\dagger_1(k, i)$ with $Q_1(i, k)$ (via $\alpha \leftrightarrow \alpha^*$). So an arrow in the third subset is of the form $[\beta^*\alpha^*]$ for some $\alpha\in Q_1(i, k)$ and $\beta\in Q_1(k, j)$. We denote by $(Q^\dagger_1(j, k) \times Q^\dagger_1(k, i))^{\mathrm{del}}$ the subset of $Q^\dagger_1(j, k) \times Q^\dagger_1(k, i)$ consisting of arrows $[\beta^*\alpha^*]$ such that $[\alpha\beta]$ belongs to $(Q_1(i,k)\times Q_1(k,j))^{\mathrm{del}}$, that is, it gets deleted from $Q'$ to $Q^\dagger$.

    Now we perform deletion of 2-cycles on $(Q^\spadesuit, H^\spadesuit)$. For any arrow in $(Q_1(j, k) \times Q_1(k, i))^{\mathrm{rem}}$ (which is of the form $[\alpha\beta]$), there is $[\beta^*\alpha^*] \in Q^\dagger_1(i, k) \times Q^\dagger_1(k, j) \subseteq Q_1^\spadesuit(i, j)$. The functor $\varphi^\spadesuit \colon \grp(Q^\spadesuit) \rightarrow \grp(Q^\dagger)/H^\dagger$ (see \Cref{def: pre-mutation}) sends $[\alpha\beta][\beta^*\alpha^*]$ to $[\alpha\beta]\beta^*\alpha^* \in H^\dagger$. Then the 2-cycle $[\alpha\beta][\beta^*\alpha^*]$ belongs to $H^\spadesuit$, thus can be deleted. The same deletion applies to arrows in $(Q_1(i, k)\times Q_1(k, j))^{\mathrm{rem}} \subseteq Q_1^\spadesuit(i, j)$.

    After removing all the pairs of arrows in the above form, the remaining set of arrows from $i$ to $j$ (resp. from $j$ to $i$) is
    \[
        Q_1(j, i)^{\mathrm{rem}} \sqcup (Q^\dagger_1(j, k) \times Q^\dagger_1(k, i))^{\mathrm{del}} \quad (\text{resp.}\quad Q_1(i, j)^{\mathrm{rem}} \sqcup (Q^\dagger_1(i, k) \times Q^\dagger_1(j, k))^{\mathrm{del}}).
    \]

    In the reduction from $Q'$ to $Q^\dagger$, the set $Q_1(j, i)^{\mathrm{del}}$ must be paired with a subset $S(i, j)$ of $(Q_1(i, k) \times Q_1(k, j))^{\mathrm{del}} \subseteq Q'_1(i, j)$. (The reason that an arrow in $Q_1(j, i)$ is never deleted together with another in $Q_1(i, j)$ is that $H$ is assumed to be reduced.) Then the set of arrows
    \[
        (Q_1(i, k) \times Q_1(k, j))^{\mathrm{del}} \setminus S(i, j) \subseteq Q'_1(i, j)
    \]
    must be deleted together with arrows in $(Q_1(j, k) \times Q_1(k, i))^{\mathrm{del}} \setminus S(j, i)$ in the reduction from $Q'$ to $Q^\dagger$. Now in the current deletion of 2-cycles, we delete any pair
    \[
        [\beta_1^*\alpha_1^*] \in (Q^\dagger_1(j, k) \times Q^\dagger_1(k, i))^{\mathrm{del}} \quad \text{and} \quad  [\beta_2^*\alpha_2^*] \in (Q^\dagger_1(i, k) \times Q^\dagger_1(j, k))^{\mathrm{del}}
    \]
    such that the 2-cycle formed by $[\alpha_1\beta_1]$ and $[\alpha_2\beta_2]$ gets deleted from $Q'$ to $Q^\dagger$. We can do this because 
    \[
        \varphi^\spadesuit([\beta_1^*\alpha_1^*][\beta_2^*\alpha_2^*]) = \beta_1^*\alpha_1^*\beta_2^*\alpha_2^* \in H^\dagger
    \]
    (since the inverse $\alpha_2\beta_2\alpha_1\beta_1$ is in $H$). After deleting those, we are at the arrow sets
    \[
        Q_1^\heartsuit (j, i) = Q_1(j, i)^{\mathrm{rem}} \sqcup \{ [\beta^*\alpha^*] \mid \alpha\beta \in S(i, j)\}
        \quad \text{and} \quad
        Q_1^\heartsuit (i, j) = Q_1(i, j)^{\mathrm{rem}} \sqcup \{ [\beta^*\alpha^*] \mid \alpha\beta \in S(j, i)\}
    \]
    with no further possible deletion.

    At this point we can define a bijection 
    \[
        \eta \colon Q_1^\heartsuit(j, i) \rightarrow Q_1(j, i) = Q_1(j, i)^{\mathrm{rem}} \sqcup Q_1(j,i)^{\mathrm{del}}
    \]
    by gluing the identity on $Q_1(j, i)^{\mathrm{rem}}$ and the bijection from $S(i, j)$ to $Q_1(j, i)^{\mathrm{del}}$ (obtained in the reduction from $Q'$ to $Q^\dagger$). Then $\eta$ extends to a bijection from $Q_1^\heartsuit$ to $Q_1$ in the obvious way, which gives an isomorphism from $Q^\heartsuit$ to $Q$.

    To see $\eta(H^\heartsuit) = H$, one can easily check that the diagram
    \[
        \begin{tikzcd}
            \grp(Q^\heartsuit) \ar[r, "\psi^\heartsuit"] \ar[d, "\eta"] & \grp(Q^\spadesuit)/H^\spadesuit \ar[r, "\varphi^\spadesuit"] & \grp(Q^\dagger)/H^\dagger \ar[d, "\varphi\circ \psi"] \\
            \grp(Q) \ar[rr, two heads] & & \grp(Q)/H
        \end{tikzcd}
    \]
    commutes. The functor $\varphi\circ \psi$ is an isomorphism by \Cref{remark: quotient groupoid invariant}. The functors $\psi^\heartsuit$ and $\varphi^\spadesuit$ are defined as $\psi$ and $\varphi$ but for the mutation from $(Q^\dagger, H^\dagger)$ to $(Q^\heartsuit, H^\heartsuit)$, where we know that the former is full and the latter is isomorphic. Then $\eta$ must send $H^\heartsuit \coloneqq \ker \psi^\heartsuit$ to $H$.
\end{proof}

\subsection{Patterns of quivers with homotopies}\label{subsec: patterns of QH}

Let $\mathbb T_n$ be the $n$-regular (infinite) tree where the $n$ edges incident to any vertex are labeled by $\{1, \dots, n\}$. Let $Q$ be a (loop-free) quiver with $Q_0 = \{1, \dots, n\}$ and be assigned to a chosen root vertex $v_0$ of $\mathbb T_n$. Due to the involutivity of mutations (\Cref{prop: mutation involutive}), any homotopy $H$ on $Q$ induces a unique \emph{pattern of quivers with homotopies} on $\mathbb T_n$ that associates any $v\in \mathbb T_n$ a pair $(Q_v, H_v)$ such that
\[
    (Q_{v}, H_{v}) = \mu_k(Q_{v'}, H_{v'})
\]
when $v\frac{k}{\quad\quad} v'$ is a $k$-labeled edge in $\mathbb T_n$.

\begin{definition}
    Two homotopies $H$ and $H'$ on $Q$ are said to be \emph{equivalent} if the quivers of $\mu_{\underline k}(Q,H)$ and $\mu_{\underline k}(Q,H')$ are isomorphic (fixing $Q_0$) for any mutation sequence $\mu_{\underline k} \coloneqq \mu_{k_\ell}\cdots\mu_{k_1}$ with $\underline k = (k_\ell, \dots, k_1)$. In other words, they induce the same assignment of quivers via $v\mapsto Q_v$ on $\mathbb T_n$. In this case, we write $H\sim_Q H'$.
\end{definition}

\begin{example}
    We know from \Cref{ex: 2 cycle} that for $Q$ a 2-cycle, any reduced homotopy is generated by a power (at least 2) of the 2-cycle or is the trivial homotopy. Since the induced mutation on the 2-cycle quiver is simply reversing the orientation, all homotopies are equivalent.    
\end{example}

A quiver $Q$ is called \emph{2-acyclic} if it does not contain any oriented 2-cycles. It is natural to ask that for a 2-acyclic quiver $Q$, which homotopies $H \subseteq \grp(Q)$ induce FZ-mutations quivers for any sequence of mutations with homotopies. In fact, this can be achieved by taking the maximal $H = \pi(Q)$.

\begin{proposition}\label{prop: maximal H induces FZ mutation}
    Let $(Q, H)$ be a quiver with homotopy where $Q$ is 2-acyclic and $H = \pi(Q)$. For any sequence of mutations $\mu_{k_\ell}\cdots \mu_{k_1}(Q, H) = (Q', H')$, we have $Q' = \fzmu_{k_\ell}\cdots \fzmu_{k_1}(Q)$.
\end{proposition}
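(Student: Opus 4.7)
The plan is to induct on $\ell$ and reduce everything to a single-mutation statement. The crux is recognizing that when $H = \pi(Q)$ is the maximal homotopy, the quotient groupoid $\pi(Q)/H$ is the \emph{indiscrete groupoid} on $Q_0$: every Hom-set $(\pi(Q)/H)(x,y)$ collapses to a singleton because any two parallel morphisms differ by an element of $H = \pi(Q)$. I would first establish this observation (working component by component if $Q$ is disconnected), as it will drive every subsequent step.

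For the inductive step it suffices to show $\mu_k(Q, \pi(Q)) = (\fzmu_k(Q), \pi(\fzmu_k(Q)))$ whenever $Q$ is $2$-acyclic. Let $(Q', H') = \prefzmu_k(Q, H)$. By \Cref{def: pre-mutation}, the functor $\varphi \colon \pi(Q') \to \pi(Q)/H$ lands in the indiscrete groupoid, so every endomorphism of $\pi(Q')$ lies in $\ker \varphi$; hence $H' = \pi(Q')$ and maximality is preserved by pre-mutation. In the deletion step of \Cref{def: delete 2-cycles}, the condition $\gamma_p\delta_q \in H'$ then holds automatically for every candidate pair, so the greedy procedure removes pairs between any two vertices $i, j \neq k$ until one of $Q'_1(i,j)$, $Q'_1(j,i)$ is empty. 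This yields exactly a maximal collection of $2$-cycles to delete, matching step $(3)$ of \Cref{def: fz mutation}. A brief check using $2$-acyclicity of $Q$ shows that no $2$-cycle involving $k$ arises in $Q'$: reversal of arrows at $k$ cannot create a $2$-cycle at $k$ because $Q$ had none, and any new arrow $[\alpha\beta]$ has source $s(\beta) \neq k$ and target $t(\alpha) \neq k$, since otherwise $\alpha$ or $\beta$ would be a loop. Consequently $Q^\dag$ is $2$-acyclic and agrees with $\fzmu_k(Q)$ up to the standard choice-ambiguity. A second application of the same kernel argument to $\psi \colon \pi(Q^\dag) \to \pi(Q')/H'$ gives $H^\dag = \pi(Q^\dag)$, so we recover a pair of the same form and can iterate the argument at $k_2, \ldots, k_\ell$.

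The main technical point to handle with care is not deep but is easy to botch: both \Cref{def: fz mutation}(3) and \Cref{def: delete 2-cycles} involve a choice of which maximal collection of $2$-cycles to delete. On the homotopy side, \Cref{lemma: unique reduction up to iso} only guarantees $\mu_k(Q, H)$ up to an isomorphism fixing vertices; on the FZ side, the truly invariant datum is the $B$-matrix. I will therefore read the equality $Q' = \fzmu_{k_\ell}\cdots\fzmu_{k_1}(Q)$ in this standard sense and, to align the two constructions step by step, pick the deletion ordering in \Cref{def: delete 2-cycles} compatibly with the chosen maximal collection in \Cref{def: fz mutation}(3) at each stage of the induction. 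With this bookkeeping in place the two processes carry out literally the same combinatorial operation at every vertex pair, and the proposition follows.
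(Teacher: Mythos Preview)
Your proposal is correct and follows essentially the same approach as the paper. Both arguments hinge on the observation that $H=\pi(Q)$ makes $\pi(Q)/H$ indiscrete, and that this property is preserved along mutations, forcing every newly created $2$-cycle to lie in the homotopy and hence be deleted; the paper's version is slightly more compressed in that it invokes the invariance of the quotient groupoid (\Cref{remark: quotient groupoid invariant}) and the built-in reducedness of the output homotopy to conclude at once that no $2$-cycle can survive, whereas you unpack this by computing $H'=\ker\varphi=\pi(Q')$ and $H^\dag=\ker\psi=\pi(Q^\dag)$ directly and checking the vertex $k$ by hand.
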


\begin{proof}
    It suffices to show that for any mutation sequence, the quiver $Q'$ is 2-acyclic. Assuming that every $Q'$ is 2-acyclic, they must be related by FZ-mutations since a maximal collection of $2$-cycles must be deleted in the mutation process with homotopies. In fact, by the maximal assumption of $H$, we have $(\grp(Q)/H)(i, i) = 1$ (the trivial group) and thus $(\grp(Q')/H')(i, i) = 1$ by the invariance of quotient groupoids (see \Cref{remark: quotient groupoid invariant}). Since $H'$ is reduced, there is no $2$-cycle in $Q'$.
\end{proof}

\begin{remark}
    Elements in $\grp(Q)(i, i) = \pi_1(Q, i)$ can be expressed as reduced cyclic walks (\Cref{def: cyclic walk}) at $i$ in $Q$. We say that a reduced cyclic walk $c = a_na_{n-1}\cdots a_1$ is \emph{chordless} if one cannot replace a proper sub-walk of $c$ by a non-empty walk with strictly shorter length. It is well-known that $\grp(Q)(i, i)$ is generated by all chordless cyclic walks at $i$ for $i\in Q_0$.   
\end{remark}

To another extreme, one can take the trivial homotopy. For a general $2$-acyclic quiver with trivial homotopy, 2-cycles are easily created and not deleted after mutations. However, this is not the case for acyclic quivers, i.e., quivers without oriented cycles. We will see in \Cref{subsec: homotopy and orbit mutation} that for acyclic quivers, the trivial homotopy also realizes FZ-mutation patterns, hence equivalent to the maximal homotopy in \Cref{prop: maximal H induces FZ mutation}.

\subsection{Homotopies and orbit mutations}\label{subsec: homotopy and orbit mutation}

Let $(Q, H)$ be a quiver with homotopy. It is natural to consider, via the Galois correspondence (\Cref{prop: galois correspondence}), the regular covering 
\begin{equation}\label{eq: galois correspond cover}
    p = p_H \colon Q_H \rightarrow Q
\end{equation}
such that for each $x\in Q_0$ and any $\tilde x\in p^{-1}(x)$, we have 
\[
    p_*(\pi_1(Q_H, \tilde x)) = H(x, x) \subseteq \pi_1(Q, x)
\] 
where $p_*$ denotes the map between fundamental groups induced by $p$. In this case, we write $H = p_*(\pi(Q_H))$. When $Q$ is loop-free and $H$ is reduced, it is clear that $Q_H$ is $2$-acyclic. So the covering $p$ is weakly admissible (\Cref{def: weakly admissible}). However it is not necessarily $[k]$-mutable for $k\in Q_0$.

Assume that $p$ is $[k]$-mutable. This means that the (regular) covering
\[
    p^\ddag \coloneqq \fzmu_{[k]}(p) \colon \fzmu_{[k]}(Q_H) \rightarrow \fzmu_{[k]}(Q_H)/\Gamma
\]
(where $\Gamma = \mathrm{Deck}(p)$) is again weakly admissible, which means that there is no $\Gamma$-loop in $\widetilde Q^\ddag \coloneqq \fzmu_{[k]}(Q_H)$ and equivalently that there is no loop in the quotient quiver $Q^\ddag \coloneqq \fzmu_{[k]}(Q_H)/\Gamma$. The regular covering $p^\ddag$ corresponds to a normal subgroupoid $H^\ddag$ of $\pi(Q^\ddag)$, that is, by our notation
\[
    H^\ddag = p^\ddag_*(\pi(\widetilde Q^\ddag)) \subseteq \pi(Q^\ddag).    
\]

On the other hand, we can mutate the quiver with homotopy $(Q, H)$ as in \Cref{subsec: mutations with homotopy}. The following result shows the compatibility between orbit mutations and mutations of quivers with homotopies.

\begin{proposition}\label{prop: orbit mutation compatible}
    Let $\widetilde {\mu}_k(Q, H) = (Q', H')$ and $\mu_k(Q, H) = (Q^\dag, H^\dag)$. Under the assumption that $p$ is $[k]$-mutable, the choice made as in \Cref{lemma: remove 2 cycle orbit} induces a choice of deletion of $2$-cycles in \Cref{def: delete 2-cycles} from $Q'$ to $Q^\dagger$. This gives rise to a ($Q_0$-fixing) quiver isomorphism 
    \[
        f \colon Q^\ddag \rightarrow Q^\dagger \quad \text{such that} \quad  H^\dag = f_*(H^\ddag).
    \]
\end{proposition}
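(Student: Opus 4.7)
The plan is to identify the $[k]$-orbit mutation covering $p^\ddag \colon \widetilde Q^\ddag \to Q^\ddag$ with the Galois covering attached to $(Q^\dag, H^\dag)$, by first matching the pre-mutations via \Cref{lemma: delete loops} and then matching the subsequent deletions via \Cref{lemma: remove 2 cycle orbit}.

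\textbf{Step 1 (pre-mutation level).} I would first show that the covering $\widetilde Q'^{\,\Gamma\text{-lf}} \to Q'$ produced by \Cref{lemma: delete loops} realizes exactly the homotopy $H'$; that is, its image under $p'_*$ coincides with $H' \subseteq \pi(Q')$. This reduces to the following lifting property: given a loop $c$ at $i$ in $Q'$ and a lift $\tilde i$ of $i$, the canonical lift $\tilde c$ in $\widetilde Q'$ (built arrow-by-arrow via $a^\star \mapsto \tilde a^\star$ and $[\alpha\beta] \mapsto [\tilde\alpha\tilde\beta]$) ends at the deck translate of $\tilde i$ corresponding to $\varphi(c) \in \pi(Q)$, where $\varphi$ is the functor of \Cref{def: pre-mutation}. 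Hence $\tilde c$ closes up iff $\varphi(c) \in H$, which is equivalent to $c \in H'$ by definition of $H'$.

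\textbf{Step 2 (matching the deletions).} Next, I would show that the $\Gamma$-equivariant maximal choice of $2$-cycles in $\widetilde Q'$ fixed by \Cref{lemma: remove 2 cycle orbit} descends to a valid (and maximal) choice of $2$-cycles to delete in $Q'$ in the sense of \Cref{def: delete 2-cycles}. Split the $2$-cycles of $\widetilde Q'$ according to whether their two vertices lie in distinct $\Gamma$-orbits or the same one. For the former, a $2$-cycle $(\tilde\gamma, \tilde\delta)$ is exactly a loop in $\widetilde Q'^{\,\Gamma\text{-lf}}$; by Step 1, its projection $(\gamma, \delta)$ satisfies $\gamma\delta \in H'$, and conversely every such pair in $Q'$ lifts to one upstairs. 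So the $\Gamma$-orbits of deleted $2$-cycles of this type are in bijection with the pairs deletable by \Cref{def: delete 2-cycles}, and maximality is preserved. For $2$-cycles of the second type (between vertices in the same fiber of $p$), the $[k]$-mutability hypothesis forces every such arrow to be paired off and deleted, producing no residue in the quotient.

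\textbf{Step 3 (the isomorphism $f$ and the homotopies).} Define $f \colon Q^\ddag \to Q^\dag$ as the quiver map sending the $\Gamma$-orbit of a surviving arrow of $\widetilde Q^\ddag$ to its image in $Q^\dag \subseteq Q'$ under the projection of \Cref{lemma: delete loops}; by Step 2 this is a quiver isomorphism fixing $Q_0$. To verify $f_*(H^\ddag) = H^\dag$: a loop $c$ in $Q^\dag$ lies in $H^\dag = \ker\psi$ iff its image in $\pi(Q')/H'$ is trivial iff, by Step 1, its canonical lift in $\widetilde Q'^{\,\Gamma\text{-lf}}$ closes up. Because $\widetilde Q^\ddag$ embeds into $\widetilde Q'^{\,\Gamma\text{-lf}}$ as a subquiver containing all arrows above $Q^\dag$, the lifts in $\widetilde Q'^{\,\Gamma\text{-lf}}$ of such a loop coincide with lifts in $\widetilde Q^\ddag$, and closedness is precisely the condition defining $H^\ddag = p^\ddag_*(\pi(\widetilde Q^\ddag))$.

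The main obstacle is Step 2: one must bookkeep the $\Gamma$-action on unordered pairs of vertices, carefully separating pairs with free and non-free $\Gamma$-orbits. For non-free orbits the deletion is rigidly prescribed by \Cref{lemma: remove 2 cycle orbit} and must exhaust all $\Gamma$-loops, which is exactly what the $[k]$-mutability hypothesis guarantees; for free orbits the flexibility granted by \Cref{lemma: unique reduction up to iso} allows the arbitrary $\Gamma$-equivariant choice upstairs to match a maximal choice downstairs.
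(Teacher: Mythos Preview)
Your proposal is correct and follows essentially the same route as the paper's proof: both first pass through the intermediate $\Gamma$-loop-free pre-mutation covering of \Cref{lemma: delete loops} to identify $H'$ with the image of $\pi(\widetilde Q'^{\,\Gamma\text{-lf}})$ (the paper argues this via invariance of the deck group, you via an explicit path-lifting computation of $\varphi$), and then match the $2$-cycle deletions by splitting into same-orbit and different-orbit pairs exactly as you do, invoking $[k]$-mutability for the former and the $H'$-membership of lifted $2$-cycles for the latter. The one place where the paper is slightly more explicit than your sketch is in Step~2: it notes that the partition of arrows between $p(i)$ and $p(j)$ by the various preimages $j_1,\dots,j_s$ coincides with the partition into the blocks $G_p, D_p$ appearing in the proof of \Cref{lemma: unique reduction up to iso}, which is what guarantees that maximality upstairs descends to maximality downstairs.
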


\begin{proof}
    We introduce an intermediate step in the orbit mutation as follows. We first delete from the orbit pre-mutation $\prefzmu_{[k]}(Q_H)$ all the 2-cycles between any two vertices in the same $\Gamma$-orbit. Since the orbit mutation is assumed to still be weakly admissible, after deleting those 2-cycles, there are no arrows in between any two vertices in the same $\Gamma$-orbit, that is, there is no $\Gamma$-loop. Denote this quiver by $\prefzmu_{[k]}(Q_H)^\mathrm{lf}$, where `lf' stands for $\Gamma$-loop free. The group $\Gamma$ acts freely on 
    \[
        Q_H' \coloneqq \prefzmu_{[k]}(Q_H)^\mathrm{lf},
    \]
    inducing a regular covering
    \[
        \prefzmu_{[k]}(p)^\mathrm{lf}\colon Q_H' \rightarrow Q_H'/\Gamma,
    \]
    where the quotient quiver is loop-free. By \Cref{lemma: delete loops}, the quotient quiver naturally identifies with $Q' = \prefzmu_k(Q)$. Since the deck transformation group does not change, we have
    \[
        \pi(Q)/H = \pi(Q')/\prefzmu_{[k]}(p)^\mathrm{lf}_*(\pi(Q'_H)).
    \]
    This isomorphism is induced by the same map $\varphi$ from $\pi(Q')$ to $\pi(Q)$ as in \Cref{def: pre-mutation}. Therefore by the definition of $H'$, we have
    \begin{equation}\label{eq: homotopy intermediate}
        H' = \prefzmu_{[k]}(p)^\mathrm{lf}_*(\pi(Q'_H)).
    \end{equation}

    Now we move on to the deletion of 2-cycles between $i$ and $j$ that are not in the same $\Gamma$-orbit. Make a choice as in \Cref{lemma: remove 2 cycle orbit}. The arrows in the collection of 2-cycles to be removed are made of $\Gamma$-orbits. Thus removing them induces removing 2-cycles in the quotient quiver $Q'$ as arrows in the quotient quiver are simply orbit sets.

    We then need to show that the above cancellation indeed qualifies for the deletion process from $Q'$ to $Q^\dagger$ in \Cref{def: delete 2-cycles}. Let $i$ be in $(Q_H)_0$ that is not in $p^{-1}(k)$. Consider $\{j_1, \dots, j_s\}$ the set of all vertices in a $\Gamma$-orbit other than $p^{-1}(k)$ or $p^{-1}(i)$. There might be 2-cycles between $i$ and any of $\{j_1, \dots, j_s\}$ in $\prefzmu_{[k]}(Q_H)^\mathrm{lf}$. In fact, such $2$-cycles, because of (\ref{eq: homotopy intermediate}), must be in the homotopy $H'$. Thus the choice made in \Cref{lemma: remove 2 cycle orbit} induces a choice in \Cref{def: delete 2-cycles}. The partition of arrows between 
    \[
        p(i)\quad \text{and} \quad p(j_1) = \dots = p(j_s)
    \]
    by different $j_\ell$'s also coincides with such a partition considered in the proof of \Cref{lemma: unique reduction up to iso}.

    We now have shown that the quotient quiver $Q^\ddag = \fzmu_{[k]}(Q_H)/\Gamma$ upon the choice identifies with $Q^\dagger$ (with the identification named $f$ in the statement). Recall that $H^\dagger$ is defined so that the quotient groupoid $\grp(Q^\dagger)/H^\dagger$ is $\grp(Q)/H$ (see \Cref{remark: quotient groupoid invariant}), which does not change under the orbit mutation (as the deck transformation group remains unchanged). Hence we have finally $H^\dag = f_*(H^\ddag)$.
\end{proof}

The following is a direct corollary of \Cref{prop: orbit mutation compatible}.

\begin{corollary}\label{cor: global pattern}
    Let $p_v\colon \widetilde Q_v \rightarrow Q_v$ be a weakly admissible covering associated to each $v\in \mathbb T_n$ such that for any $v \frac{k}{\quad\quad} v'$ in $\mathbb T_n$, we have $p_v = \fzmu_{[k]}(p_{v'})$. In other words, each $p_v$ is globally weakly admissible and they are related by orbit mutations. Then the assignment $v\mapsto (Q_v, H_v)$, $v\in \mathbb T_n$ where
    \[
        H_v\coloneqq (p_v)_*(\pi(\widetilde Q_v))
    \]
    is a pattern of quivers with homotopies on $\mathbb T_n$. In other words, for any $v \frac{k}{\quad\quad} v'$ in $\mathbb T_n$, we have
    \[
        \mu_{k}(Q_v, H_v) = (Q_{v'}, H_{v'}).
    \]
\end{corollary}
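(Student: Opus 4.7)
The plan is to reduce directly to \Cref{prop: orbit mutation compatible} edge by edge along the tree. Fix an arbitrary edge $v \frac{k}{\quad\quad} v'$ in $\mathbb T_n$. By the global hypothesis, both $p_v$ and $p_{v'}$ are weakly admissible, and $p_v = \fzmu_{[k]}(p_{v'})$; in particular, $p_v$ itself is $[k]$-mutable (its orbit mutation, namely $p_{v'}$, is weakly admissible). This is exactly the setup under which \Cref{prop: orbit mutation compatible} applies to the covering $p_v$.

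I would then apply \Cref{prop: orbit mutation compatible} to $p_v$ at direction $[k]$. Writing $(Q^\dag, H^\dag) = \mu_k(Q_v, H_v)$ and recalling that $H_v = (p_v)_*(\pi(\widetilde Q_v))$ is by construction the homotopy associated to $p_v$ via the Galois correspondence, the proposition provides a $Q_0$-fixing quiver isomorphism $f\colon Q^\ddag \to Q^\dag$ such that $H^\dag = f_*(H^\ddag)$, where
\[
    Q^\ddag = \fzmu_{[k]}(\widetilde Q_v)/\Gamma, \qquad H^\ddag = \bigl(\fzmu_{[k]}(p_v)\bigr)_*\bigl(\pi(\fzmu_{[k]}(\widetilde Q_v))\bigr).
\]
The hypothesis $\fzmu_{[k]}(p_v) = p_{v'}$ then identifies $Q^\ddag$ with $Q_{v'}$ and $H^\ddag$ with $(p_{v'})_*(\pi(\widetilde Q_{v'})) = H_{v'}$. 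Hence $\mu_k(Q_v, H_v) = (Q_{v'}, H_{v'})$ after the identification $f$. Since this argument is uniform in the edge, the assignment $v\mapsto (Q_v, H_v)$ is a pattern on $\mathbb T_n$.

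The only genuine subtlety is bookkeeping: \Cref{prop: orbit mutation compatible} produces an equality only up to a $Q_0$-fixing quiver isomorphism $f$ that translates the homotopy. To promote this into a strict pattern assignment on $\mathbb T_n$, one must choose, at each edge, consistent representatives of quivers with homotopies by absorbing the isomorphisms $f$ into the $(Q_v, H_v)$ themselves. This is not a real obstacle but merely an instance of the same convention already implicit in the pattern formalism of \Cref{subsec: patterns of QH}, where the involutivity of mutation (\Cref{prop: mutation involutive}) allows one to regard quivers with homotopies as well-defined up to such isomorphism.
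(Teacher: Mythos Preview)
Your proposal is correct and is exactly the approach the paper takes: the corollary is stated as a direct consequence of \Cref{prop: orbit mutation compatible}, applied edge by edge along $\mathbb T_n$. Your explicit discussion of the $Q_0$-fixing isomorphism $f$ and how it is absorbed into the pattern formalism is a welcome elaboration of what the paper leaves implicit.
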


\begin{example}
    Consider the covering $p\colon \widetilde Q \rightarrow Q$ in \Cref{fig: glob weak admin covering}. The homotopy $H = p_*(\pi(\widetilde Q))$ is generated by $\{abab, cdcd, efef, fca, ebd\}$.  It is easy to verify that $p$ is globally weakly admissible. By \Cref{cor: global pattern}, mutations of quivers with homotopies can be computed by orbit mutations.
\end{example}

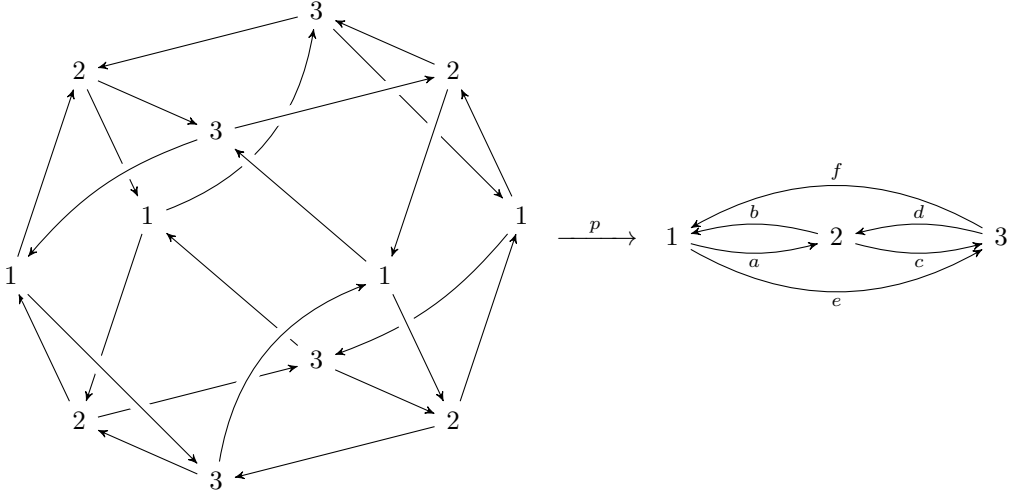
\begin{figure}[ht]
    \centering
    \begin{equation*}
    \begin{tikzcd}[sep = small]
    	&&&&& 3 \\
    	& 2 &&&&&& 2 \\
    	&&& 3 \\
    	\\
    	&& 1 &&&&&& 1 \\
    	1 &&&&&& 1 \\
    	\\
    	&&&&& 3 \\
    	& 2 &&&&&& 2 \\
    	&&& 3
    	\arrow[from=2-2, to=5-3]
    	\arrow[from=5-9, to=2-8]
    	\arrow[from=2-2, to=3-4]
    	\arrow[from=1-6, to=2-2]
    	\arrow[from=2-8, to=1-6]
    	\arrow[bend right = 30, from=5-3, to=1-6]
    	\arrow[from=9-2, to=8-6]
    	\arrow[from=8-6, to=9-8]
    	\arrow[from=9-8, to=10-4]
    	\arrow[from=10-4, to=9-2]
    	\arrow[from=5-3, to=9-2]
    	\arrow[from=9-2, to=6-1]
    	\arrow[from=6-1, to=2-2]
    	\arrow[from=9-8, to=5-9]
    	\arrow[from=8-6, to=5-3]
    	\arrow[bend left = 15, from=5-9, to=8-6]
    	\arrow[from=1-6, to=5-9]
        \arrow[crossing over, from=3-4, to=2-8]
        \arrow[crossing over, from=2-8, to=6-7]
    	\arrow[crossing over, from=6-7, to=9-8]
        \arrow[bend right = 15, crossing over, from=3-4, to=6-1]
    	\arrow[crossing over, from=6-1, to=10-4]
    	\arrow[bend left = 30, crossing over, from=10-4, to=6-7]
    	\arrow[crossing over, from=6-7, to=3-4]
    \end{tikzcd}
    \ 
    \xrightarrow{\quad p\quad }
    \ 
    \begin{tikzcd}[column sep = huge]
        1 \ar[r, bend right=15, swap, "a"] \ar[rr, bend right, swap, "e"] & 2 \ar[r, bend right=15, swap, "c"] \ar[l, bend right=15, swap, "b"] & 3 \ar[l, bend right=15, swap, "d"] \ar[ll, bend right, swap, "f"]
    \end{tikzcd}
\end{equation*}
    \caption{A globally weakly admissible covering.}
    \label{fig: glob weak admin covering}
\end{figure}

Another application of \Cref{prop: orbit mutation compatible} is the following sufficient condition for globally weakly admissible coverings. Let $\pi(Q)^2$ denote the normal subgroupoid of $\pi(Q)$ generated by squares of cyclic walks.

\begin{proposition}\label{prop: global admissible pi square}
    Any weakly admissible covering $p\colon \widetilde Q \rightarrow Q$ such that $\pi(Q)^2\subseteq p_*(\pi(\widetilde Q))$ is globally weakly admissible. Therefore for any reduced homotopy $H$ on $Q$ such that $\pi(Q)^2\subseteq H$, the covering \eqref{eq: galois correspond cover} $p_H\colon Q_H \rightarrow Q$ is globally weakly admissible.
\end{proposition}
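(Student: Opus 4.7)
The plan is to observe that the hypothesis $\pi(Q)^2 \subseteq p_*(\pi(\widetilde Q))$ is an intrinsic property of the quotient groupoid $\pi(Q)/p_*(\pi(\widetilde Q))$, namely that each of its vertex groups has exponent dividing $2$, and then combine this observation with \Cref{remark: quotient groupoid invariant} (invariance of the quotient groupoid under mutation) and \Cref{prop: orbit mutation compatible} (compatibility of orbit mutations with mutations of quivers with homotopies). First I would verify that $p$ is $[k]$-mutable for every $k\in Q_0$: for any $2$-cycle $\beta\alpha$ based at $k$ in $Q$, the square $(\beta\alpha)^2$ lies in $\pi(Q,k)^2 \subseteq p_*(\pi(\widetilde Q,\tilde k))$, so \Cref{lemma: sufficient k-mutable} applies. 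Consequently, $p^\ddag \coloneqq \fzmu_{[k]}(p)\colon \widetilde Q^\ddag \to Q^\ddag$ is again weakly admissible.

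Next I would show the hypothesis persists under one orbit mutation. Writing $H \coloneqq p_*(\pi(\widetilde Q))$ and $H^\ddag \coloneqq p^\ddag_*(\pi(\widetilde Q^\ddag))$, \Cref{prop: orbit mutation compatible} identifies $(Q^\ddag, H^\ddag)$ with $\mu_k(Q,H)$ via a $Q_0$-fixing quiver isomorphism that translates the homotopy. Then \Cref{remark: quotient groupoid invariant} yields a groupoid isomorphism $\pi(Q^\ddag)/H^\ddag \cong \pi(Q)/H$ that is the identity on objects, since it is built from the vertex-fixing functors $\varphi$ and $\psi$ appearing in \Cref{def: pre-mutation} and \Cref{def: mutation}. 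The condition $\pi(Q)^2 \subseteq H$ says precisely that every vertex group $\pi(Q,v)/H(v,v)$ has exponent dividing $2$; this property is manifestly preserved by any group isomorphism, so each vertex group $\pi(Q^\ddag,v)/H^\ddag(v,v)$ also has exponent dividing $2$, i.e.\ $\pi(Q^\ddag)^2 \subseteq H^\ddag$. A straightforward induction on the length of an arbitrary orbit mutation sequence then shows that the covering remains weakly admissible at every stage, establishing global weak admissibility.

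For the second statement, by the Galois correspondence (\Cref{prop: galois correspondence}) the covering $p_H\colon Q_H \to Q$ satisfies $(p_H)_*(\pi(Q_H)) = H$. Reducedness of $H$ forces $Q_H$ to be $2$-acyclic (any $2$-cycle in $Q_H$ would project to a $2$-cycle in $Q$ lying in $H$, contradicting reducedness), and $Q$ is loop-free by the convention on quivers with homotopies, so $p_H$ is weakly admissible; since $(p_H)_*(\pi(Q_H)) = H \supseteq \pi(Q)^2$, the first part applies. The only subtle point is to check that the groupoid isomorphism furnished by \Cref{remark: quotient groupoid invariant} is the identity on objects so that the exponent condition can be transported vertex by vertex; this is immediate from the construction, so no genuine obstacle arises.
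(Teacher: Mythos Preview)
Your proof is correct and follows essentially the same approach as the paper's: both use \Cref{lemma: sufficient k-mutable} to establish $[k]$-mutability, invoke \Cref{prop: orbit mutation compatible} and \Cref{remark: quotient groupoid invariant} to transport the exponent-$2$ condition on the quotient groupoid across one orbit mutation, and conclude by induction. Your treatment of the second statement spells out the observation (made just before the proposition in the paper) that reducedness of $H$ forces $Q_H$ to be $2$-acyclic, which is a welcome bit of extra detail.
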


\begin{proof}
    For $k\in Q_0$, let
    \[
        p^\ddag \colon \widetilde Q^{\ddag} \rightarrow Q^{\ddag}
    \]
    denote the orbit mutation of $p$ at $[k]$. By \Cref{lemma: sufficient k-mutable}, $p$ is $[k]$-mutable and thus $p^\ddag$ is again weakly admissible. We only have to show $\pi(Q^\ddag)^2\subseteq p^\ddag_*(\pi(\widetilde Q^\ddag))$; then by induction $p$ is globally weakly admissible.

    It is convenient to choose a base point $v\in Q_0$. By \Cref{prop: orbit mutation compatible}, we have
    \[
        Q^\ddag = Q^\dag \quad \text{and} \quad p^\ddag_*(\pi(\widetilde Q^\ddag)) = H^\dag,
    \]
    where $(Q^\dag, H^\dag) = \mu_k(Q, p_*(\pi(\widetilde Q)))$. Therefore the quotient groupoids are isomorphic by \Cref{remark: quotient groupoid invariant}:
    \[
        \pi_1(Q^\ddag, v)/p^\ddag_*(\pi_1(\widetilde Q^\ddag, v)) = \pi_1(Q^\dag, v)/H^\dag(v,v) = \pi_1(Q,v)/p_*(\pi_1(\widetilde Q, v)).
    \]
    Any non-trivial element in $\pi_1(Q,v)/p_*(\pi_1(\widetilde Q, v))$ is of order two because of the assumption $\pi(Q)^2\subseteq p_*(\pi(Q))$. Hence so is $\pi_1(Q^\ddag, v)/p^\ddag_*(\pi_1(\widetilde Q^\ddag, v))$, that is, $\pi(Q^\ddag)^2\subseteq p^\ddag_*(\pi(\widetilde Q^\ddag))$.
\end{proof}

Now we turn to understand equivalent classes of homotopies on an acyclic quiver as promised in \Cref{subsec: patterns of QH}. We use \Cref{prop: orbit mutation compatible}, \Cref{cor: global pattern} and a theorem of Huang and Li \cite{HL} to prove

\begin{theorem}\label{thm: any homotopy induces fz mutation acyclic}
    Let $Q$ be an acyclic quiver. Let $H$ be as in \Cref{prop: maximal H induces FZ mutation}, that is $H(i, i) = \pi_1(Q, i)$ for any $i\in Q_0$. Then $H$ is equivalent to the trivial homotopy $\trht$. In fact, any homotopy is equivalent to $\trht$.
\end{theorem}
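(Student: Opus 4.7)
The plan is to reduce the theorem to showing $\trht \sim_Q \pi(Q)$, and then establish this case via the universal covering.

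For the reduction, any reduced homotopy $H$ on $Q$ satisfies $\trht \subseteq H \subseteq \pi(Q)$. By \Cref{prop: maximal H induces FZ mutation}, the maximal choice $\pi(Q)$ induces the FZ-mutation pattern. In the pre-mutation $\tilde\mu_k$, the induced homotopy $H' = \ker(\pi(Q') \to \pi(Q)/H)$ is monotone in $H$, so $H'_\trht \subseteq H'_H$. If $\mu_k(Q, \trht)$ already produces a $2$-acyclic quiver (the FZ-mutation), then every $2$-cycle in $Q'$ lies in $H'_\trht \subseteq H'_H$, and so $\mu_k(Q, H)$ produces the same $2$-acyclic quiver. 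The invariance of the quotient groupoid (\Cref{remark: quotient groupoid invariant}) ensures the analogous containment $H^\dag_\trht \subseteq H^\dag_H$ after one mutation, and induction on the length of the mutation sequence propagates the equivalence $H \sim_Q \trht$.

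To handle the trivial homotopy, I use the universal covering $\hat p\colon \widehat Q \to Q$, which corresponds to $\trht$ by the Galois correspondence (\Cref{prop: galois correspondence}). Since $Q$ is acyclic, the underlying graph of $\widehat Q$ is a tree, so $\widehat Q$ is $2$-acyclic and $\hat p$ is weakly admissible. I then invoke the theorem of Huang--Li \cite{HL}: for an acyclic quiver $Q$, the universal covering is globally admissible, meaning that every orbit mutation sequence preserves the $2$-acyclicity of both the cover and the quotient. Then \Cref{cor: global pattern} produces a pattern of quivers with homotopies on $\mathbb T_n$ starting at $(Q, \trht)$ in which every $Q_v$ is $2$-acyclic, and by \Cref{prop: orbit mutation compatible} each such $Q_v$ coincides with the FZ-mutation $\fzmu_{\underline k}(Q)$. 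Hence $\trht \sim_Q \pi(Q)$.

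The main obstacle is the global admissibility of the universal covering of an acyclic quiver, which is the content of \cite{HL}. Morally, a new $\Gamma$-loop or quotient $2$-cycle arising in an orbit pre-mutation at $[k]$ would project to a $2$-cycle through $k$ in the preceding quotient; induction then forbids this, as the FZ-mutation of $2$-acyclic quivers stays $2$-acyclic. Once this step is available, the rest of the proof is a direct synthesis of the covering machinery with the monotonicity-in-$H$ reduction.
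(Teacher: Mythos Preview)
Your treatment of the trivial homotopy via the universal covering and the Huang--Li theorem matches the paper's argument exactly. The sandwich reduction for general $H$, however, takes a different route from the paper and contains a gap.

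The claim ``every $2$-cycle in $Q'$ lies in $H'_\trht$'' is too strong: $\mu_k(Q,\trht)$ being $2$-acyclic means the deletion process of \Cref{def: delete 2-cycles} finds a matching exhausting the smaller side, not that \emph{every} pair $\gamma\delta$ lies in $H'_\trht$. The correct monotonicity statement is that enlarging $H'$ coarsens the partition $\{G_p\},\{D_p\}$ from the proof of \Cref{lemma: unique reduction up to iso}, and one checks $\sum_q \min(|G'_q|,|D'_q|) \geq \sum_p \min(|G_p|,|D_p|)$; once this quantity reaches $\min(N,M)$ for $H'_\trht$ it stays there for $H'_H$, and the resulting $Q^\dag$ is the same. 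With this fix your induction goes through. (Separately, ``since $Q$ is acyclic, the underlying graph of $\widehat Q$ is a tree'' is a non sequitur: $\widehat Q$ is a tree because it is the universal cover, independent of the orientation of $Q$.)

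The paper's route for general $H'$ avoids this matching computation: it passes to the intermediate covering $p_{H'}\colon Q_{H'}\to Q$ through which $\hat p$ factors. Global admissibility of $\hat p$ forces every $Q_v$ to be $2$-acyclic, and any $2$-cycle in $Q_{H',v}$ would project to one in $Q_v$, so $p_{H'}$ is itself globally admissible. Then \Cref{cor: global pattern} applied to $p_{H'}$ yields $H' \sim_Q \trht$ directly. Your monotonicity argument is more elementary once patched; the paper's is cleaner but leans again on the covering correspondence.
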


\begin{proof}
    Notice that the regular covering $p_\trht$ corresponding to the trivial homotopy is the universal covering $\hat p \colon \widehat Q \rightarrow Q$. By \cite[Theorem 2.6]{HL}, the universal covering $\hat p$ is globally admissible (\Cref{def: weakly admissible}). Thus the quotient quivers along orbit mutations are related by FZ-mutations. Meanwhile by \Cref{cor: global pattern}, the quotient quivers coincide with the ones obtained from mutations with homotopies. This concludes that $(Q, \trht)$ is equivalent to $(Q, H)$, which induces FZ-mutations by \Cref{prop: maximal H induces FZ mutation}. 
    
    Any other homotopy $H'$ can be regarded as in between $H$ and $\trht$. Let $p\colon \widetilde Q \rightarrow Q$ denote the regular covering corresponding to $H'$. Then the universal covering $\hat p$ factors through $p$. Since $\hat p$ is globally admissible, so is $p$. Applying \Cref{cor: global pattern} to $p$, we see that $H'$ is equivalent to $\trht$.
\end{proof}

The above theorem leads to a ``mutation-invariant'' for quivers obtained by FZ-mutations from a given acyclic quiver $Q$. It asserts that $Q$ is among the simplest ones in its FZ-mutation class.

\begin{corollary}\label{cor: lower bound fund group}
    Let $Q'$ be a quiver obtained by FZ-mutations from an acyclic quiver $Q$. Then the rank of $\pi_1(Q')$ is no smaller than the rank of $\pi_1(Q)$.
\end{corollary}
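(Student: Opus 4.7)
The plan is to deduce the rank inequality from the invariance of the quotient groupoid under mutation (\Cref{remark: quotient groupoid invariant}), combined with \Cref{thm: any homotopy induces fz mutation acyclic}, which identifies the FZ-mutation class of an acyclic quiver with the mutation class of $(Q,\trht)$.

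First, I would use \Cref{thm: any homotopy induces fz mutation acyclic}: since $Q$ is acyclic, the trivial homotopy $\trht$ on $Q$ realizes the FZ-mutation pattern. Concretely, if $Q' = \fzmu_{k_\ell}\cdots\fzmu_{k_1}(Q)$ for some mutation sequence $\underline k=(k_\ell,\dots,k_1)$, then there exists a reduced homotopy $H'$ on $Q'$ such that $(Q',H') = \mu_{k_\ell}\cdots\mu_{k_1}(Q,\trht)$, with the quiver underlying this mutation-with-homotopy being isomorphic (as a quiver fixing the vertex set) to $Q'$.

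Next, iterating \Cref{remark: quotient groupoid invariant} along $\underline k$ yields a canonical isomorphism of quotient groupoids $\grp(Q')/H' \cong \grp(Q)/\trht = \grp(Q)$. Fixing any base vertex $v\in Q_0=Q'_0$, this restricts to a group isomorphism $\pi_1(Q',v)/H'(v,v) \cong \pi_1(Q,v)$, and in particular to a surjection $\pi_1(Q',v)\twoheadrightarrow \pi_1(Q,v)$. Since the underlying graph of any quiver is a $1$-complex (which deformation retracts onto a wedge of circles after choosing a spanning tree), both $\pi_1(Q,v)$ and $\pi_1(Q',v)$ are free groups. A surjection $F_m\twoheadrightarrow F_n$ between free groups forces $m\geq n$ by passing to abelianizations $\Z^m\twoheadrightarrow \Z^n$; for possibly disconnected $Q$, the same argument applies to each connected component separately, as mutations fix the vertex set and act independently on components.

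There is no substantive new obstacle once \Cref{thm: any homotopy induces fz mutation acyclic} is in hand: the corollary becomes a formal consequence of quotient-groupoid invariance together with the freeness of fundamental groups of graphs. The only care needed is in the direction of the surjection — it is $\pi_1(Q')$ that surjects onto $\pi_1(Q)$, which is exactly the direction required to conclude $\mathrm{rank}(\pi_1(Q'))\geq \mathrm{rank}(\pi_1(Q))$.
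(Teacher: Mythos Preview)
Your argument is correct and follows essentially the same route as the paper: obtain $(Q',H')$ from $(Q,\trht)$ via mutations with homotopies using \Cref{thm: any homotopy induces fz mutation acyclic}, invoke the invariance of the quotient groupoid to get $\pi_1(Q',v)/H'(v,v)\cong \pi_1(Q,v)$, and conclude the rank inequality from the surjection between free groups. Your version merely spells out the abelianization step and the disconnected case in more detail than the paper does.
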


\begin{proof}
    Let $H'$ be the homotopy on $Q'$ such that $(Q', H')$ is obtained from $(Q, \trht)$ by mutations with homotopies. Notice that the quotient group $\pi_1(Q', i)/H'(i, i)$ remains unchanged thus isomorphic to the fundamental group $\pi_1(Q, i)$ where $i\in Q_0$. This implies that $\pi_1(Q', i)$ (always isomorphic to a free group $F_n$) has rank $n$ no less than the rank of $\pi_1(Q, i)$.
\end{proof}

\subsection{A comparison between homotopies and potentials}\label{subsec: compare homotopy potential}

The notion of homotopies parallels, but differs from the notion of \emph{potentials} on quivers developed by Derksen--Weyman--Zelevinsky \cite{DWZ08}. In their framework, mutations of quivers depend on the choice of a potential, defined as a formal linear combination of cyclic paths on the quiver. A potential is called \emph{degenerate} if some mutation sequence leads to the creation of a $2$-cycle that cannot be eliminated; otherwise it is \emph{non-degenerate}. We note that the DWZ-mutation of quivers with potentials has not been fully adapted to quivers containing $2$-cycles. Indeed, if a mutation occurs at a vertex incident to $2$-cycles, one would need to introduce loop edges in order to write down the mutated potential $\widetilde{S}$ in \cite[(5.8)]{DWZ08}.

By contrast, this difficulty is completely avoided in the pre-mutation step with homotopies (see \Cref{def: pre-mutation}). From our perspective, all homotopies are equally natural because there is no obstruction for subsequent mutations even if the appearance of $2$-cycles. Some homotopies reproduce ordinary FZ-mutations (as in \Cref{prop: maximal H induces FZ mutation} and \Cref{thm: any homotopy induces fz mutation acyclic}), while others (mentioned in \Cref{subsec: qh from surface}), as we will see in \Cref{section: marked surfaces}, yield quiver mutations with $2$-cycles governed by surface combinatorics.

In view of \Cref{subsec: patterns of QH}, starting with the trivial homotopy on a loop-free quiver $Q$ and forming the resulting pattern $(Q_v, H_v)$, $v\in \mathbb{T}_n$, then it is in general unclear (already for $2$-acyclic quivers containing cyclic paths) how to realize these quivers $Q_v$ through mutations with potentials. Indeed, the quivers $Q_v$ typically contain $2$-cycles, and mutations with potentials are therefore easily obstructed.

In \Cref{example of mutation 1}, after mutation at vertex $1$, the homotopy is generated by a walk involving formal inverses of arrows. This phenomenon has no analogue for potentials, which by definition are formal linear combinations of cyclic paths. This is a genuine difference between the two notions.

\section{Quivers with homotopies from surface triangulations}\label{section: marked surfaces}

Fomin, Shapiro and Thurston \cite{FST08} studied a class of cluster algebras associated to marked bordered surfaces, building upon work of Gekhtman, Shapiro and Vainshtein \cite{GSV} and of Fock and Goncharov \cite{FG1,FG2}. In particular they introduced tagged triangulations of those surfaces and attached a 2-acyclic quiver to any tagged triangulation such that flips of triangulations induce FZ-mutations of the attached quivers.

In this section we generalize the Fomin--Shapiro--Thurston model to allow a new kind of additional interior marked points, referred to as $\coii$-punctures, whereas the original interior marked points are called $\coi$-punctures. We associated a quiver with homotopy $(Q(T), H(T))$ to each tagged ideal triangulation $T$. The $\coii$-punctures lead to the presence of $2$-cycles in the associated quivers of certain triangulations.

We will show \Cref{thm: mutation and flip} that flips of tagged triangulations induce mutations of quivers with homotopies. The proof uses a construction of a $2$-complex $X(T)$ such that $\pi(X(T)) = \pi(Q(T))/H(T)$, which is invariant under mutations. The $2$-complex $X(T)$ is a deformation retract of the surface with $\coii$-punctures removed, thus independent of the triangulation $T$.

\subsection{Bordered surfaces with marked points and colored punctures}

The following definition follows the terminology of \cite{FST08}.

\begin{definition}[Bordered surfaces with marked points]\label{def: marked bordered surface}
Let $\S$ be a Riemann surface with or without boundary. Fix a non-empty finite set $\M$ of \emph{marked points} on $\S$ such that there is at least one marked point on each boundary component of $\S$. Marked points in the interior of $\S$ are called \emph{punctures}, the collection of which is denoted by $\punc\subseteq \M$.
\end{definition}

As a generalization of the Fomin--Shapiro--Thurston model, we assign two different colors $\{\coi, \coii\}$ to punctures.

\begin{definition}\label{def: coloring}
    A \emph{coloring} on $(\S, \M)$ is a function $c\colon \punc \rightarrow \{\coi, \coii\}$. Write $\punc_\coi = c^{-1}(\coi)$ and $\punc_{\coii} = c^{-1}({\coii})$. A triple $(\S, \M, c)$ is called a \emph{marked bordered surface with colored punctures} if it is none of the following:
    \begin{itemize}
        \item a sphere with one or two punctures;
        \item an unpunctured or once-punctured monogon;
        \item an unpunctured digon or an unpunctured triangle;
        \item a thrice-punctured sphere where $\punc_{\coii}\neq \punc$;
        \item a sphere with four punctures where $|\punc_{\coi}| \geq 3$.
    \end{itemize}
\end{definition}

Punctures colored by $\coi$ are treated the same as in the setting of \cite{FST08}, whereas $\coii$-punctures represent new data.

\begin{remark}
    A sphere with four $\coi$-punctures is allowed in \cite{FST08}. We exclude this case because some triangulations do not admit a puzzle-piece decomposition which we use later to define a quiver with homotopy.
\end{remark}

We define \emph{arcs} in $(\S, \M)$ in the same way as in \cite[Definition 2.2]{FST08}. Briefly, arcs are simple curves in $\S$ avoiding $\M$ in their interior but having endpoints in $\M$. Arcs are not contractible into $\M$ or onto the boundary of $\S$, and are considered up to isotopy. 

Two arcs are called \emph{compatible} if they do not intersect in the interior of $\S$. A maximal collection of mutually compatible arcs is called an \emph{ideal triangulation}, or simply a triangulation, of $(\S, \M)$. By arcs and triangulations of $(\S, \M, c)$, we mean those of $(\S, \M)$.

\subsection{Quivers with homotopies associated to triangulations}

We first construct a quiver $Q(T)$ for any triangulation $T$ of $(\S, \M, c)$ and then construct a homotopy $H(T)$ on $Q(T)$. Typically the quiver $Q(T)$ may contain 2-cycles when $\punc_{\coii}$ is non-empty.

Following \cite[Remark 4.2]{FST08}, we decompose any triangulation $T$ into a number of ``puzzle pieces''. There are four kinds of puzzle pieces and to each kind we assign a quiver whose vertices are arcs. The four puzzle pieces are displayed in \Cref{fig: puzzle pieces and their quivers} and \Cref{fig: self-folded II puncture} where a $\coi$-puncture is depicted as \begin{tikzpicture}
    \node[coii] at (0,0) {};
    \node[fill, circle, inner sep=0.5pt] at (0,0) {};
\end{tikzpicture}, a $\coii$-puncture as \begin{tikzpicture}
    \node[coii] at (0,0) {};
\end{tikzpicture}, and a puncture without a dedicated color as \begin{tikzpicture}
    \node[generic] at (0,0) {};
\end{tikzpicture}.

The three puzzle pieces in \Cref{fig: puzzle pieces and their quivers} already show up in the setting of \cite{FST08}. There is a new puzzle piece though: a self-folded triangle enclosing a $\coii$-puncture (\Cref{fig: self-folded II puncture}). Such a puzzle piece can arise when the surface (regardless of the coloring) is first decomposed into the three puzzle pieces in \Cref{fig: puzzle pieces and their quivers} and then a second or third puzzle piece can contain a self-folded triangle with a $\coii$-puncture, so it further decomposes.

\begin{remark}
    There are cases for which the existence of puzzle-piece decomposition does not follow from that of \cite{FST08}. They are a thrice-punctured sphere with $\punc_{\coii} = \punc$ and a sphere with four punctures with $|\punc_{\coii}|=2,3,4$. Nonetheless it can be directly verified in these cases any triangulation decomposes into the four kinds of puzzle pieces.
\end{remark}

\begin{figure}[ht]
    \centering
    \begin{tikzpicture}
        \coordinate (1A) at (0, {sqrt(3)});
        \coordinate (1B) at (-1, 0);
        \coordinate (1C) at (1, 0);

        \filldraw[black] (1A) circle (2pt);
        \filldraw[black] (1B) circle (2pt);
        \filldraw[black] (1C) circle (2pt);

        \draw[thick] (1A) -- (1B) node[midway, left] {$1$};
        \draw[thick] (1B) -- (1C) node[midway, below] {$3$};
        \draw[thick] (1C) -- (1A) node[midway, right] {$2$};

        \node[] (T1) at (-1, -1) {$1$};
        \node[] (T2) at (1, -1) {$2$};
        \node[] (T3) at (0, {-1-sqrt(3)}) {$3$};

        \path[->, >=stealth']
        (T1) edge (T3)
        (T3) edge (T2)
        (T2) edge (T1);

      \coordinate (2A) at (5,3); 
      \coordinate (2B) at (5,0); 
      \coordinate (2C) at (5,1.5); 
      \coordinate (2D) at (4,1.5);
      \coordinate (2E) at (6,1.5);
      \coordinate (2F) at (5,2.2);
    
      \draw[thick] (2A) to[bend left=60] (2B);
      \draw[thick] (2A) to[bend right=60] (2B);
    
      \draw[thick, black] (2C) -- (2B) node[fill=white, midway] {$4$};
      \draw[thick] (2B) .. controls  (4, 2.5) and (6, 2.5) .. (2B) node[pos=0.5, above] {$3$};
    
      \filldraw[black] (2A) circle (2pt);
      \filldraw[black] (2B) circle (2pt);
      \node[coi] at (2C) {};
    
      \draw (2D) node[fill=white] {$1$};
      \draw (2E) node[fill=white] {$2$};

      \node[] (S1) at (4,-2) {$1$};
      \node[] (S2) at (6,-2) {$2$};
      \node[] (S3) at (5,-1) {$3$};
      \node[] (S4) at (5,-3) {$4$};

      \path[->,>=stealth']
      (S1) edge (S3)
      (S1) edge (S4)
      (S2) edge (S1)
      (S3) edge (S2)
      (S4) edge (S2);

      \coordinate (3A) at (10.5, 0);
      \coordinate (3B) at ({10.5-0.6}, 1.2);
      \coordinate (3C) at ({10.5+0.6}, 1.2);

      \filldraw[black] (3A) circle (2pt);
      \node[coi] (3B) at (3B) {};
      \node[coi] (3C) at (3C) {};

      \draw[black, thick] (10.5, 1.5) circle (1.5);

      \draw[black, thick] (3A) -- (3B) node[fill=white, pos=0.6] {$5$};
      \draw[black, thick] (3A) -- (3C) node[fill=white, pos=0.6] {$3$};

      \draw[thick] (3A) .. controls ({10.5-2}, 2) and ({10.5-0.4}, 2.6) .. (3A) node[midway, above] {$4$};
      \draw[thick] (3A) .. controls ({10.5+2}, 2) and ({10.5+0.4}, 2.6) .. (3A) node[midway, above] {$2$};

      \node[below] at (10.5, 3) {$1$};

      \node[] (R1) at (10.5, -1) {$1$};
      \node[] (R2) at (11.5, -2) {$2$};
      \node[] (R3) at (11.5, -3) {$3$};
      \node[] (R4) at (9.5, -2) {$4$};
      \node[] (R5) at (9.5, -3) {$5$};

      \path[->, >=stealth']
      (R4) edge (R2)
      (R1) edge (R4)
      (R2) edge (R1)
      (R5) edge (R2)
      (R1) edge (R5)
      (R4) edge (R3)
      (R3) edge (R1)
      (R5) edge (R3);      
    \end{tikzpicture}
    \caption{From left to right: three puzzle pieces $P_1, P_2, P_3$ and their associated quivers.}
    \label{fig: puzzle pieces and their quivers}
\end{figure}
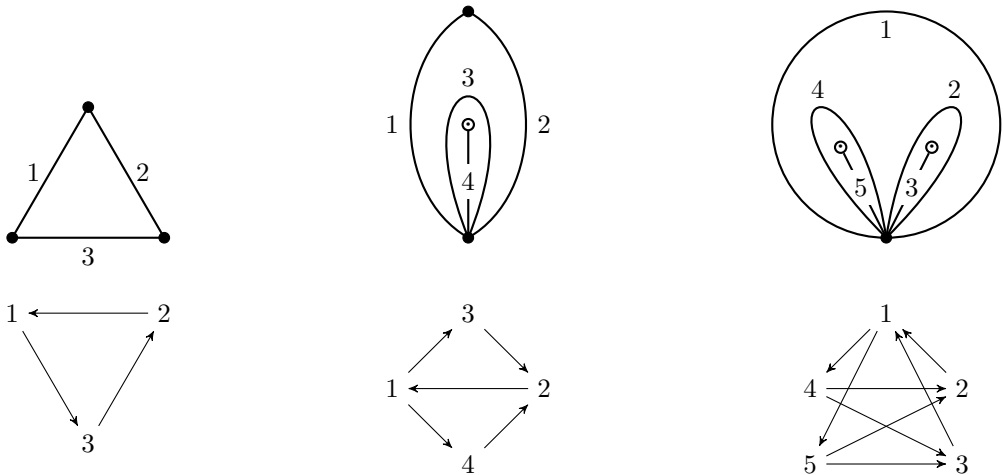

\begin{figure}[ht]
    \centering
    \begin{tikzpicture}[scale=1.1]
        \coordinate (A) at (0,0);
        \coordinate (B) at (0,1.5);

        \draw[thick] (A) -- (B) node[fill=white, pos=0.5] {$1$};
        \draw[thick] (A) .. controls (-1,2.5) and (1,2.5) .. (A) node[above, pos=0.5] {$2$};

        \filldraw[fill=white, thick] (B) circle (2pt);
        \filldraw[black] (A) circle (2pt);

        \node[] (X) at (1.5, 1) {$1$};
        \node[] (Y) at (3, 1) {$2$};

        \path[->, >=stealth']
        (X) edge[bend right=30] (Y)
        (Y) edge[bend right=30] (X);
    \end{tikzpicture}
    \caption{The new puzzle piece $P_4$ and its associated quiver.}
    \label{fig: self-folded II puncture}
\end{figure}
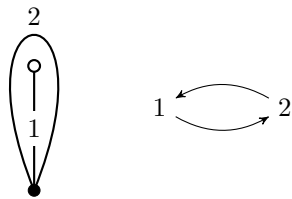

\begin{definition}\label{def: quiver with 2-cycle of T}
    The quiver $\widetilde Q(T)$ is constructed by gluing the associated quivers of all puzzle pieces along their common arcs. The quiver $Q(T)$ is obtained from $\widetilde Q(T)$ by one additional rule of deleting certain 2-cycles: whenever two triangles are glued as in \Cref{fig: I-puncture digon delete 2-cycle} where the middle marked point is $\coi$-colored, the 2-cycle (dashed) between $1$ and $2$ is deleted.
\end{definition}

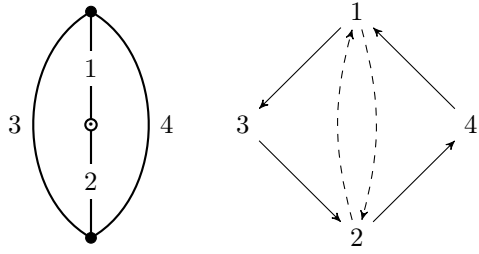
\begin{figure}[ht]
\centering
\begin{tikzpicture}
  \coordinate (A) at (0,1.5); 
  \coordinate (B) at (0,-1.5); 
  \coordinate (C) at (0,0); 
  \coordinate (D) at (-1,0);
  \coordinate (E) at (1,0);

  \draw[thick] (A) to[bend left=60] (B);
  \draw[thick] (A) to[bend right=60] (B);

  \draw[thick, black] (C) -- (A) node[fill=white, midway] {$1$};
  \draw[thick, black] (C) -- (B) node[fill=white, midway] {$2$};

  \filldraw[black] (A) circle (2pt);
  \filldraw[black] (B) circle (2pt);
  \node[coi] (C) at (C) {};

  \draw (D) node[fill=white] {$3$};
  \draw (E) node[fill=white] {$4$};

  \node[] (3) at (2, 0) {3};
  \node[] (1) at (3.5, 1.5) {1};
  \node[] (2) at (3.5, -1.5) {2};
  \node[] (4) at (5, 0) {4};

  \path[->, >=stealth']
  (1) edge (3)
  (3) edge (2)
  (2) edge (4)
  (4) edge (1)
  (1) edge[dashed, bend left=15] (2)
  (2) edge[dashed, bend left=15] (1);
\end{tikzpicture}
\caption{The dashed 2-cycle is deleted in $Q(T)$.}
\label{fig: I-puncture digon delete 2-cycle}
\end{figure}

\begin{remark}
    Without $\coii$-punctures, the quiver $Q(T)$ contains no 2-cycles and recovers the construction in \cite[Definition 4.1]{FST08}, given equivalently in the form of a skew-symmetric integer matrix $B(T)$.
\end{remark}

Next we define a homotopy $\widetilde H(T)$ on $\widetilde Q(T)$ and a reduced homotopy $H(T)$ on $Q(T)$. The following notion is needed.

\begin{definition}[The oriented cycle $C_p$ around a $\coi$-puncture $p$]
    Draw a small simple circle $\gamma$ around $p\in \punc_\coi$ of valency at least $2$, clockwise oriented. Choose an initial arc $i\in T$ that intersects with $\gamma$ and that is not enclosed in a self-folded triangle. Then read off the cyclic walk $C_p$ on $\widetilde Q(T)$ starting and ending at $i$ following $\gamma$ with one extra care: whenever $\gamma$ crosses a self-folded triangle enclosing a $\coi$-puncture, the cyclic walk $C_p$ skips over (the quiver vertex representing) the enclosed arc.    
\end{definition}

We regard $C_p$ as an element in $\pi(\widetilde Q(T))(i, i) = \pi_1(\widetilde Q(T), i)$.

\begin{example}
    Consider the surface triangulation and its associated quiver in \Cref{fig: oriented cycle around I-puncture}. Choose $i$ to be the vertex $6$. Then $C_p$ starts at $6$ and passes through the vertex $3$ but not $5$.
\end{example}

\begin{figure}
    \centering
    \begin{tikzpicture}[scale=0.9]
      \coordinate (3A) at (10.5, 0);
      \coordinate (3B) at ({10.5-0.6}, 1.2);
      \coordinate (3C) at ({10.5+0.6}, 1.2);
      \coordinate (3D) at (10.5, -1);
      \coordinate (3E) at (10.5, 4);

      \draw[black, thick] (10.5, 1.5) circle (1.5);
      \draw[black, thick] (10.5, 1.5) circle (2.5);
      \draw[black, thick] (10.5, 2) circle (2);

      \draw[black, thick] (3A) -- (3B) node[fill=white, pos=0.6] {$5$};
      \draw[black, thick] (3A) -- (3C) node[fill=white, pos=0.6] {$3$};
      \draw[black, thick] (3A) -- (3D) node[midway, left] {$6$};

      \draw[thick] (3A) .. controls ({10.5-2}, 2) and ({10.5-0.4}, 2.6) .. (3A) node[midway, above] {$4$};
      \draw[thick] (3A) .. controls ({10.5+2}, 2) and ({10.5+0.4}, 2.6) .. (3A) node[midway, above] {$2$};

      \filldraw[black] (3A) circle (2pt);
      \node[coi] (3B) at (3B) {};
      \filldraw[fill=white, thick] (3C) circle (2pt);
      \filldraw[black] (3E) circle (2pt);
      \filldraw[black] (3D) circle (2pt);

      \node[below] at (10.5, 3) {$1$};

      \node[] at ({10.5-1.8}, 0) {$7$};
      \node[] at ({10.5+1.8}, 0) {$8$};
      \node[] at ({10.5-1.9}, 1) {$9$};
      \node[] at ({10.5+1.97}, 1) {$10$};
      \node[below right] at (3A) {$p$}; 

      \node[] (2) at (17, 1.5) {$2$};
      \node[] (1) at (17, 3.5) {$1$};
      \node[] (3) at (17, 0.5) {$3$};
      \node[] (4) at (16, 2.5) {$4$};
      \node[] (5) at (18, 2.5) {$5$};
      \node[] (6) at (17, -0.5) {$6$};
      \node[] (7) at (15, 1.5) {$7$};
      \node[] (8) at (19, 1.5) {$8$};
      \node[] (9) at (15, 3.5) {$9$};
      \node[] (10) at (19, 3.5) {$10$};

      \path[->, >=stealth']
      (1) edge (4)
      (4) edge (2)
      (2) edge (1)
      (1) edge (5)
      (5) edge (2)
      (2) edge[bend right] (3)
      (3) edge[bend right] (2)
      (9) edge (1)
      (1) edge (10)
      (10) edge[bend right] (9)
      (9) edge (7)
      (7) edge[bend right] (6)
      (6) edge[bend left=10] (9)
      (10) edge[bend left=10] (6)
      (6) edge[bend right] (8)
      (8) edge (10);

      \node[] at (17, -1.1) {$C_p=6\leftarrow 10 \leftarrow 1 \leftarrow 2 \leftarrow 3 \leftarrow 2 \leftarrow 4 \leftarrow 1 \leftarrow 9 \leftarrow 6$};
    \end{tikzpicture}
    \caption{The oriented cycle $C_p$ around a $\coi$-puncture $p$.}
    \label{fig: oriented cycle around I-puncture}
\end{figure}
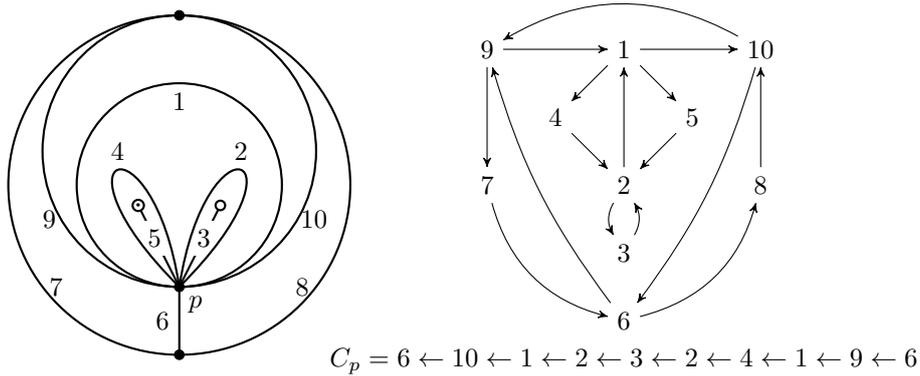

\begin{definition}\label{def: homotopy on Q(T)}
    The homotopy $\widetilde H(T)$ on $\widetilde Q(T)$ is defined to be generated by a finite collection $Y(T)$ of cycles consisting of 
    \begin{enumerate}
        \item the oriented 3-cycles in any of the three puzzle pieces of \Cref{fig: puzzle pieces and their quivers} and
        \item the oriented cycle $C_p$ around any $\coi$-puncture $p$ of valency at least $2$.
    \end{enumerate} 
    The quiver $Q(T)$ embeds into $\widetilde Q(T)$. The homotopy $H(T)$ on $Q(T)$ is defined to satisfy that
    \[
        \pi(Q(T))/H(T) \cong \pi(\widetilde Q(T))/\widetilde H(T)
    \]
    where the isomorphism is induced by the quiver embedding.
\end{definition}

We note that in view of \Cref{section: quivers with homotopies} and \Cref{def: quiver with homotopy}, the quiver with homotopy $(Q(T), H(T))$ is always reduced while $(\widetilde Q(T), \widetilde H(T))$ may not be so.

\begin{example}\label{ex: quiver with homotopy digon}
    Consider the surface with a triangulation given in \Cref{fig: II-puncture digon}. (It can be a subsurface of a larger one.) The puncture in the middle is $\coii$-colored. According to the construction in \Cref{def: quiver with 2-cycle of T} and \Cref{def: homotopy on Q(T)}, the $2$-cycle $fe$ is not deleted from $\widetilde Q(T)$ and thus is in $Q(T)$, and the $3$-cycles $fba$ and $edc$ are in $H(T)$. If instead the puncture is $\coi$-colored, then $\widetilde H(T)$ contains $fba, dce, fe$. In $Q(T)$, the $2$-cycle $fe$ is deleted and $H(T)$ now contains the $4$-cycle $dcba = dce(fe)^{-1}fba$.
\end{example}

\begin{figure}[ht]
\centering
\begin{tikzpicture}
  \coordinate (A) at (0,1.5); 
  \coordinate (B) at (0,-1.5); 
  \coordinate (C) at (0,0); 
  \coordinate (D) at (-1,0);
  \coordinate (E) at (1,0);

  \draw[thick] (A) to[bend left=60] (B);
  \draw[thick] (A) to[bend right=60] (B);

  \draw[thick, black] (C) -- (A) node[fill=white, midway] {$1$};
  \draw[thick, black] (C) -- (B) node[fill=white, midway] {$2$};

  \filldraw[black] (A) circle (2pt);
  \filldraw[black] (B) circle (2pt);
  \filldraw[thick, fill=white] (C) circle (2pt);

  \draw (D) node[fill=white] {$3$};
  \draw (E) node[fill=white] {$4$};

  \node[] (3) at (2, 0) {3};
  \node[] (1) at (3.5, 1.5) {1};
  \node[] (2) at (3.5, -1.5) {2};
  \node[] (4) at (5, 0) {4};

  \path[->, >=stealth']
  (1) edge node[pos=0.5, left] {$a$} (3)
  (3) edge node[pos=0.5, left] {$b$} (2)
  (2) edge node[pos=0.5, right] {$c$} (4)
  (4) edge node[pos=0.5, right] {$d$} (1)
  (1) edge[bend left=15] node[midway, right] {$e$} (2) 
  (2) edge[bend left=15] node[midway, left] {$f$} (1);
\end{tikzpicture}
\caption{A digon with a $\coii$-puncture.}
\label{fig: II-puncture digon}
\end{figure}
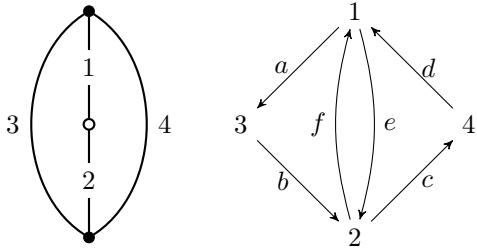

We would like to eventually perform mutations on $(Q(T), H(T))$ so that mutations are compatible with certain \emph{flips} of triangulations, which traditionally replace a diagonal of a quadrilateral with the other diagonal. The main issue is that there is no natural ``flip'' at the arc enclosed in a self-folded triangle. This issue is resolved in \cite{FST08} by introducing \emph{tagged triangulations}.

In our case, the new data of coloring on punctures leads to a modified version of tagged triangulations and of their mutations. We discuss this in \Cref{subsec: tagged triangulation}.

\subsection{\texorpdfstring{Topological realization of $(Q(T), H(T))$}{Topological realization of (Q(T), H(T))}}

To each triangulation $T$ of $(\S, \M, c)$, we associated a $2$-complex $X = X(T)$ such that $\pi(X, X_0)$, the fundamental groupoid of $X$ with base points $X_0$ is isomorphic to $\pi(Q(T))/H(T)$. 

We note that in the uncolored case, the complex $X(T)$ (and its close variants) have already appeared in the work of Amiot--Grimeland \cite{AG} and of Amiot--Labardini-Fragoso--Plamondon \cite{ALFP}.

Recall that in \Cref{def: homotopy on Q(T)} we have constructed $\widetilde Q(T)$ with a collection $Y(T)$ of oriented cycles to generate $\widetilde H(T)$.

\begin{definition}\label{def: 2-complex X(T)}
    The $2$-complex $X(T)$ is constructed from the quiver $\widetilde Q(T)$ and the collection $Y(T)$ of cycles in the way described in \Cref{lemma: 2-complex quiver homotopy}.
\end{definition}

\begin{example}\label{ex: X(T) for third puzzle piece}
    We illustrate the $2$-complex $X(T)$ in \Cref{fig: cell complex third puzzle piece} (where the 2-cells are shaded) for the third puzzle piece in \Cref{fig: puzzle pieces and their quivers}. Since all four $3$-cycles are in $Y(T)$, we glue four triangles to $\widetilde Q(T)$.
\end{example}

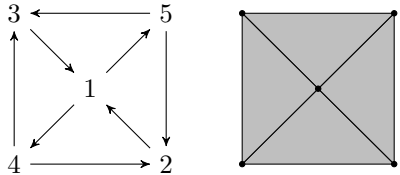
\begin{figure}
    \centering
    \begin{tikzpicture}    
        \node[] (1) at (0, 0) {$1$};
        \node[] (4) at (-1, -1) {$4$};
        \node[] (2) at (1, -1) {$2$};
        \node[] (3) at (-1, 1) {$3$};
        \node[] (5) at (1, 1) {$5$};

        \path[->, >=stealth']
        (1) edge (4)
        (4) edge (2)
        (2) edge (1)
        (1) edge (5)
        (5) edge (3)
        (3) edge (1)
        (5) edge (2)
        (4) edge (3);

        \filldraw[fill=gray!50, draw=black]
        (3, 0) -- (4, -1) -- (2, -1) -- cycle;
        \filldraw[fill=gray!50, draw=black]
        (3, 0) -- (4, -1) -- (4, 1) -- cycle;
        \filldraw[fill=gray!50, draw=black]
        (3, 0) -- (4, 1) -- (2, 1) -- cycle;
        \filldraw[fill=gray!50, draw=black]
        (3, 0) -- (2, -1) -- (2, 1) -- cycle;
        \filldraw[]
        (3,0) circle (1pt);
        \filldraw[]
        (4,-1) circle (1pt);
        \filldraw[]
        (2,-1) circle (1pt);
        \filldraw[]
        (4,1) circle (1pt);
        \filldraw[]
        (2,1) circle (1pt);
    \end{tikzpicture}
    \caption{The quiver $Q(T)$ and the $2$-complex $X(T)$ for the puzzle piece $P_3$ of \Cref{fig: puzzle pieces and their quivers}.}
    \label{fig: cell complex third puzzle piece}
\end{figure}

The $1$-skeleton $X(T)_1$ of $X(T)$ is $\widetilde Q(T)$. Notice that $\widetilde Q(T)$ only differs with $Q(T)$ by having the dashed 2-cycles as in \Cref{fig: I-puncture digon delete 2-cycle} around $\coi$-punctures.

\begin{proposition}\label{prop: fund grp surface}
    As groupoids we have
    \[
        \pi(X(T), X(T)_0) \cong \pi(\widetilde Q(T))/\widetilde H(T) \cong \pi(Q(T))/H(T).
    \]
    The 2-complex $X(T)$ is homotopic to $\S\setminus \punc_{\coii}$. Thus we have the isomorphism of fundamental groups
    \[
        \pi_1(X(T)) \cong \pi_1(\S\setminus \punc_{\coii}).
    \]
\end{proposition}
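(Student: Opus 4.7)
The first two isomorphisms are essentially formal and should reduce to earlier results, while the topological equivalence is the substantive step.

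For $\pi(X(T), X(T)_0) \cong \pi(\widetilde Q(T))/\widetilde H(T)$, I will simply invoke \Cref{lemma: 2-complex quiver homotopy}: by \Cref{def: 2-complex X(T)}, the 2-complex $X(T)$ is exactly $\Delta(\widetilde Q(T), \widetilde H(T), Y(T))$ in the notation of that lemma. For $\pi(\widetilde Q(T))/\widetilde H(T) \cong \pi(Q(T))/H(T)$, I will show that the quiver inclusion $Q(T) \hookrightarrow \widetilde Q(T)$ induces a surjective groupoid functor modulo the respective homotopies, and that $H(T)$ of \Cref{def: homotopy on Q(T)} is precisely its kernel. Surjectivity amounts to checking that each arrow of $\widetilde Q(T) \setminus Q(T)$, i.e., a dashed 2-cycle arrow as in \Cref{fig: I-puncture digon delete 2-cycle}, is congruent modulo $\widetilde H(T)$ to a word in arrows of $Q(T)$. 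This is a local calculation inside a $\coi$-puncture digon: the two triangle $3$-cycles of the two incident puzzle pieces $P_1$, together with $C_p$ for the enclosed $\coi$-puncture (all elements of $Y(T)$), express each such arrow as a composition of the four outer arrows surrounding the digon.

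The heart of the proof is the homotopy equivalence $X(T) \simeq \S \setminus \punc_{\coii}$. The plan is to realize $X(T)$ as a CW subcomplex embedded in $\S \setminus \punc_{\coii}$ and exhibit a deformation retraction. I place each vertex of $\widetilde Q(T)$ (an arc $\alpha\in T$) at a midpoint $m_\alpha$ of the arc, draw each arrow of $\widetilde Q(T)$ as a smooth path joining midpoints inside the triangle of $T$ witnessing the adjacency, and attach each 2-cell of $X(T)$ inside a specific region of $\S$: a 3-cycle 2-cell arising from a triangle of $T$ is embedded inside that triangle; the 2-cell for $C_p$ around a $\coi$-puncture $p$ of valency $\geq 2$ is embedded as a small disk around $p$; and the 2-cells of $P_2$ are arranged so that their union fills in a neighborhood of the enclosed $\coi$-puncture of valency $1$. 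Crucially, no 2-cell is attached across the $2$-cycle of the puzzle piece $P_4$, so the $\coii$-punctures remain unfilled and each contributes an essential loop.

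With this embedding in hand, the complement of $X(T)$ in $\S \setminus \punc_{\coii}$ decomposes into contractible pieces, each of which deformation retracts onto $X(T)$: thin strips along each arc of $T$ retract onto the midpoint vertex; neighborhoods of boundary marked points and of interior marked points retract onto adjacent 2-cells; neighborhoods of $\coi$-punctures retract onto the corresponding $C_p$-cell (or, for valency $1$, onto the $P_2$ cells). Combining the local retractions yields the global homotopy equivalence, and hence $\pi_1(X(T)) \cong \pi_1(\S \setminus \punc_{\coii})$. The main technical obstacle is the bookkeeping around self-folded triangles: one must verify carefully that the pair of 3-cycle 2-cells of $P_2$ does fill in a neighborhood of a valency-$1$ $\coi$-puncture (so that no separate $C_p$ cell is needed and the puncture is covered), and that in $P_4$ the omission of a 2-cell leaves exactly one essential loop per $\coii$-puncture. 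These checks are local and puzzle-piece-by-puzzle-piece, but they are what makes the colored-puncture version of the Fomin--Shapiro--Thurston model work.
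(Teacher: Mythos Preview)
Your treatment of the two groupoid isomorphisms is fine; indeed the second one is essentially the \emph{definition} of $H(T)$ (\Cref{def: homotopy on Q(T)}), so the only content is the fullness check you outline, and that local calculation at a $\coi$-digon is correct.

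For the homotopy equivalence you take a genuinely different route from the paper. The paper does \emph{not} embed $X(T)$ directly into $\S$. Instead it first simplifies $X(T)$ up to homotopy: the two $3$-cycle cells of $P_2$ are deformation-retracted to the single triangle $123$, and the four $3$-cycle cells of $P_3$ to the single triangle $124$. The resulting complex $\widehat X(T)$ then has exactly one triangular $2$-cell per puzzle piece $P_1,P_2,P_3$ (none for $P_4$), plus the $C_p$ disks, and its embedding into $\S$ is immediate---each remaining triangle sits inside the corresponding surface triangle, and the $C_p$ disk sits around $p$. The retraction of $\S\setminus\punc_{\coii}$ onto this embedded $\widehat X(T)$ is then straightforward.

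Your direct-embedding approach could be made to work, but as written it has a gap. Your rule ``draw each arrow inside the triangle of $T$ witnessing the adjacency'' does not apply to all arrows of $\widetilde Q(T)$: in $P_2$ the arrows $1\to 4$ and $4\to 2$ connect the outer arcs to the enclosed arc and are \emph{not} induced by any single triangle of $T$ (arc $4$ borders only the self-folded triangle, which does not contain arcs $1$ or $2$); the same issue arises for several arrows of $P_3$. Consequently your claim that the two $P_2$ cells can be ``arranged so that their union fills a neighborhood of the enclosed $\coi$-puncture'' is not justified by the rule you stated, and you do not address $P_3$ (four cells, two enclosed $\coi$-punctures) at all. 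These embeddings can be done by hand, but the bookkeeping is exactly what the paper's retract-first strategy is designed to avoid. I would recommend adopting that simplification step: collapse the extra cells of $P_2$ and $P_3$ before embedding, and the rest of your argument goes through cleanly.
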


\begin{proof}
    The first statement follows directly from the construction \Cref{lemma: 2-complex quiver homotopy}.
    
    For the second puzzle piece in \Cref{fig: puzzle pieces and their quivers}, its associated $2$-complex can be deformation retracted to the triangle $123$. For the third puzzle piece in \Cref{fig: puzzle pieces and their quivers}, its associated $2$-complex can be deformation retracted to the triangle $124$.

    After these deformation retractions (which do not affect the parts further glued), we obtain a complex $\widehat{X}(T)$ homotopic to $X(T)$. Notice that $\widehat{X}(T)$ can then be easily embedded into $\S$, which in turn is a deformation retraction of $\S\setminus \punc_{\coii}$. Therefore we have $\pi_1(X(T)) \cong \pi_1(\S\setminus \punc_{\coii})$.
\end{proof}

\begin{example}
    Recall the quiver $\widetilde Q(T) = Q(T)$ in \Cref{fig: oriented cycle around I-puncture}. We draw $\widehat{X}(T)$ in \Cref{fig: cell complexes}. The $2$-cell associated with $C_p$ is shown in a darker shade. We note that the cell $125$ in $X(T)$ has been retracted to the edge $12$ in $\widehat{X}(T)$.
\end{example}

\begin{figure}[ht]
    \centering
    \begin{tikzpicture}[scale=0.8]
      \coordinate (2) at (0, 0);
      \coordinate (1) at (0, 2);
      \coordinate (3) at (0, -1);
      \coordinate (4) at (-1, 1);
      \coordinate (5) at (1, 1);
      \coordinate (6) at (0, -2);
      \coordinate (7) at (-2, 0);
      \coordinate (8) at (2, 0);
      \coordinate (9) at (-2, 2);
      \coordinate (10) at (2, 2);

      \path[fill=gray!30, draw=black]
      (6) to[bend left=10] (9) --
      (7) to[bend right=30] (6)
      -- cycle

      (9) -- (1) -- (10) to[bend right=30] cycle
      
      (10) to[bend left=10] (6) --
      (6) to[bend right=30] (8)
      -- cycle
      
      (1) -- (4) -- (2) -- cycle;
      \path[draw=black]
      (2) to[bend left] (3) to[bend left] (2);

      \path[fill=gray!60, draw=black]
      (6) to[bend left=10] (9) -- (1) -- (4) --
      (2) to[bend right] (3) to[bend right] (2) --
      (1) -- (10) to[bend left=10] cycle;

      \filldraw[] (2) circle (1pt);
      \filldraw[] (1) circle (1pt);
      \filldraw[] (3) circle (1pt);
      \filldraw[] (4) circle (1pt);
      \filldraw[] (6) circle (1pt);
      \filldraw[] (7) circle (1pt);
      \filldraw[] (8) circle (1pt);
      \filldraw[] (9) circle (1pt);
      \filldraw[] (10) circle (1pt);

      \node[below right] at (1) {$1$};
      \node[right] at (2) {$2$};
      \node[below] at (3) {$3$};
      \node[left] at (4) {$4$};
      \node[below] at (6) {$6$};
      \node[left] at (7) {$7$};
      \node[right] at (8) {$8$};
      \node[left] at (9) {$9$};
      \node[right] at (10) {$10$};
    \end{tikzpicture}
    \caption{The associated $\widehat{X}(T)$ of \Cref{fig: oriented cycle around I-puncture}.}
    \label{fig: cell complexes}
\end{figure}
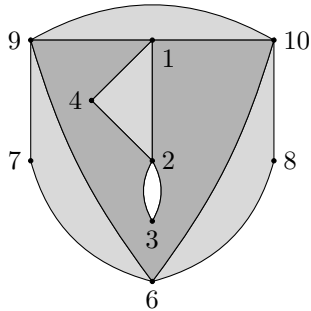

\subsection{Tagged triangulations}\label{subsec: tagged triangulation}

As already mentioned earlier, the flip operation on a triangulation is not yet defined at an arc enclosed in a self-folded triangle. This issue affects both $\coi$- and $\coii$-punctures. To resolve it, we introduce \emph{tagged arcs} and \emph{tagged triangulations}, following \cite{FST08}.

\begin{definition}\label{def: tagged triangulation}
    Let $(\S,\M, c)$ be a marked bordered surface with colored punctures.
    \begin{enumerate}
        \item A \emph{tagged arc} is an arc in which each of its two ends is tagged \emph{plain} or \emph{notched} (represented by the $\bowtie$ symbol), so that
        \begin{itemize}
            \item the arc does not cut out a once-punctured monogon enclosing a $\coi$-puncture;
            \item an end connected to the boundary is tagged plain;
            \item both ends of a loop arc are tagged in the same way.
        \end{itemize}
        \item Two tagged arcs $\alpha$ and $\beta$ are called \emph{compatible} if the following conditions are satisfied:
        \begin{itemize}
            \item the untagged versions of $\alpha$ and $\beta$ are compatible;
            \item if the untagged versions of $\alpha$ and $\beta$ are different, and $\alpha$ and $\beta$ share an endpoint $a$, then the ends of $\alpha$ and $\beta$ connected $a$ must be tagged in the same way;
            \item if the untagged versions of $\alpha$ and $\beta$ coincide, then at least one end of $\alpha$ must be tagged in the same way as the corresponding end of $\beta$;
            \item if the untagged versions of $\alpha$ and $\beta$ coincide, then any end of $\alpha$ connected to a $\coii$-puncture must be tagged in the same way as the corresponding end of $\beta$.
        \end{itemize}
        \item A \emph{tagged triangulation} of $(\S, \M, c)$ is a maximal collection of compatible tagged arcs.
    \end{enumerate}
\end{definition}

\begin{remark}
    Unlike at a $\coi$-puncture, a (tagged) loop cutting out a once-$\coii$-punctured monogon is allowed as a tagged arc. Another difference, compared with the compatibility of tagged arcs in \cite[Definition 7.4]{FST08}, is the last condition in (2); see \Cref{fig: triangulated once-punctured digon} for all tagged triangulations of a once-$\coii$-punctured digon.
\end{remark}

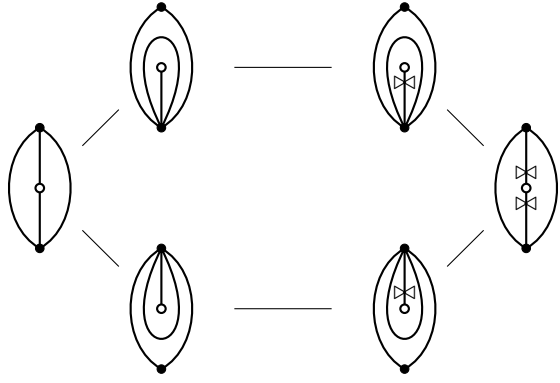
\begin{figure}[ht]
    \centering
    \begin{tikzpicture}[scale=0.8]
        \coordinate (C1) at (-4, 0);
        \coordinate (B1) at (-4, -1);
        \coordinate (A1) at (-4, 1);
        
        \coordinate (C2) at (-2, 2);
        \coordinate (B2) at (-2, 1);
        \coordinate (A2) at (-2, 3);
        
        \coordinate (C3) at (2, 2);
        \coordinate (B3) at (2, 1);
        \coordinate (A3) at (2, 3);
        
        \coordinate (C4) at (4, 0);
        \coordinate (B4) at (4, -1);
        \coordinate (A4) at (4, 1);
        
        \coordinate (C5) at (2, -2);
        \coordinate (B5) at (2, -3);
        \coordinate (A5) at (2, -1);
        
        \coordinate (C6) at (-2, -2);
        \coordinate (B6) at (-2, -3);
        \coordinate (A6) at (-2, -1);

        \filldraw[] (B1) circle (2pt);
        \filldraw[] (B2) circle (2pt);
        \filldraw[] (B3) circle (2pt);
        \filldraw[] (B4) circle (2pt);
        \filldraw[] (B5) circle (2pt);
        \filldraw[] (B6) circle (2pt);

        \filldraw[] (A1) circle (2pt);
        \filldraw[] (A2) circle (2pt);
        \filldraw[] (A3) circle (2pt);
        \filldraw[] (A4) circle (2pt);
        \filldraw[] (A5) circle (2pt);
        \filldraw[] (A6) circle (2pt);

        \path[thick, draw=black]
        (B1) to[bend left=60] (A1)
        (B2) to[bend left=60] (A2)
        (B3) to[bend left=60] (A3)
        (B4) to[bend left=60] (A4)
        (B5) to[bend left=60] (A5)
        (B6) to[bend left=60] (A6)
        (B1) to[bend right=60] (A1)
        (B2) to[bend right=60] (A2)
        (B3) to[bend right=60] (A3)
        (B4) to[bend right=60] (A4)
        (B5) to[bend right=60] (A5)
        (B6) to[bend right=60] (A6)
        (B1) -- (C1) -- (A1)
        (B2) -- (C2)
        (B3) -- (C3)
        (B4) -- (C4) -- (A4)
        (C5) -- (A5)
        (C6) -- (A6);

        \filldraw[fill=white, thick] (C1) circle (2pt);
        \filldraw[fill=white, thick] (C2) circle (2pt);
        \filldraw[fill=white, thick] (C3) circle (2pt);
        \filldraw[fill=white, thick] (C4) circle (2pt);
        \filldraw[fill=white, thick] (C5) circle (2pt);
        \filldraw[fill=white, thick] (C6) circle (2pt);

        \path[thick, draw=black]
        (B2) .. controls (-3,3) and (-1,3) .. (B2)
        (B3) .. controls (1,3) and (3,3) .. (B3)
        (A6) .. controls (-3,-3) and (-1,-3) .. (A6)
        (A5) .. controls (1,-3) and (3,-3) .. (A5);

        \node[below] at (C3) {$\bowtie$};
        \node[above] at (C4) {$\bowtie$};
        \node[below] at (C4) {$\bowtie$};
        \node[above] at (C5) {$\bowtie$};

        \draw[]
        (-3.3, 0.7) -- (-2.7, 1.3)
        (-3.3, -0.7) -- (-2.7, -1.3)
        (3.3, 0.7) -- (2.7, 1.3)
        (3.3, -0.7) -- (2.7, -1.3)
        (-0.8, 2) -- (0.8,2)
        (-0.8, -2) -- (0.8,-2);
    \end{tikzpicture}
    \caption{Tagged triangulations of a once-$\coii$-punctured digon related by flips.}
    \label{fig: triangulated once-punctured digon}
\end{figure}

Let $\tarc(\S, \M, c)$ be the set of all tagged arcs. The \emph{tagged arc complex} $\tagcom(\S, \M, c)$ is the clique complex for the compatibility relation on the set $\tarc(\S, \M, c)$. Namely the simplices of $\tagcom(\S, \M, c)$ are collections of mutually compatible tagged arcs. This notion generalizes $\tagcom(\S, \M)$ of \cite{FST08} to the current situation with colored punctures.

The following theorem generalizes \cite[Theorem 7.9]{FST08} straightforwardly.

\begin{theorem}\label{thm: tagcom pseudomfld}
    Each maximal simplex in $\tagcom(\S, \M, c)$ (i.e., a tagged triangulation) has the same cardinality $n=6g+3b+3p+s-6$ tagged arcs, where $g$ is the genus of $\S$, $b$ is the number of boundary components, $p = |\punc|$ is the number of punctures and $s = |\M\setminus\punc|$ is the number of marked points on the boundary. Each simplex of codimension $1$ is contained in precisely two maximal simplices. In other words, the tagged arc complex $\tagcom(\S, \M, c)$ is a pseudomanifold.
\end{theorem}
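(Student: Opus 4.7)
The plan is to mimic the strategy of \cite[Theorem 7.9]{FST08}, reducing the statement to facts about ideal triangulations via a carefully chosen ``untagging'' map from tagged to ideal triangulations. First I would establish the cardinality formula. The count $n=6g+3b+3p+s-6$ already holds for ideal triangulations of $(\S, \M)$ by the classical Euler-characteristic argument of \cite{FST08}. For tagged triangulations, I would construct a reduction $T \mapsto T^\circ$ as follows. At each $\coi$-puncture $p$, the compatibility condition (second bullet of \Cref{def: tagged triangulation}(2)) forces all tagged arcs incident to $p$ to agree at $p$; if the common tag is notched, relabel all these ends as plain. If two tagged arcs in $T$ share the same underlying arc at $p$, replace the pair by a self-folded triangle in the FST sense. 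At a $\coii$-puncture $p$, the stricter compatibility condition (fourth bullet) likewise forces agreement on both ends at $p$ when underlying arcs coincide; alternatively, $T$ may contain a tagged loop enclosing $p$, which we encode as a self-folded triangle with its enclosed arc recorded. In every case the result $T^\circ$ is an ideal triangulation of $(\S, \M)$ with the same number of arcs as $T$, producing the cardinality formula.

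Next I would handle the pseudomanifold claim. Fix a simplex of codimension one, that is, a collection of $n-1$ pairwise compatible tagged arcs; we must show it has precisely two completions. Away from $\coii$-punctures, the argument of \cite[Theorem 7.9]{FST08} applies verbatim under the untagging reduction: the missing arc is either a diagonal of a standard quadrilateral, or is handled by the tagged-arc flip near a once-$\coi$-punctured digon, and each case provides exactly two completions. The only new local configurations concern $\coii$-punctures, and for these the model case is the once-$\coii$-punctured digon, whose six tagged triangulations are enumerated in \Cref{fig: triangulated once-punctured digon}. By direct inspection of that figure, each codimension-one simplex in the induced subcomplex lies in exactly two facets, and the flips realizing these completions are precisely the edges drawn there.

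The main obstacle will be the local case analysis at $\coii$-punctures, because of the asymmetry between a tagged loop enclosing a $\coii$-puncture and a pair of arcs landing on it: a tagged loop around $p \in \punc_{\coii}$ is compatible with strictly fewer configurations than the analogous picture at a $\coi$-puncture, and the fourth bullet of \Cref{def: tagged triangulation}(2) demands that both ends of coinciding underlying arcs at a $\coii$-puncture carry identical tags. To deal with this I would work out explicitly the tagged arc-star of a $\coii$-puncture, that is, the subcomplex of $\tagcom(\S, \M, c)$ consisting of tagged arcs incident to or enclosing $p$, and verify by hand that removing any facet of this local sub-pseudomanifold leaves exactly two completions, in parallel with the analysis of tagged stars at ordinary punctures in \cite{FST08}. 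Once this local bookkeeping is in place, the global pseudomanifold property follows by gluing the local pictures, exactly as in the uncolored case, completing the proof.
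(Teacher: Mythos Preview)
Your proposal is correct and aligns with the paper's approach: the paper offers no detailed proof, merely asserting that the result ``generalizes \cite[Theorem 7.9]{FST08} straightforwardly,'' and your plan supplies exactly the kind of argument that claim presupposes. The reduction $T\mapsto T^\circ$ you describe is essentially the one the paper writes down immediately after the theorem (for a different purpose), and your identification of the once-$\coii$-punctured digon as the only genuinely new local picture is on target, with \Cref{fig: triangulated once-punctured digon} providing the needed enumeration.

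One small clarification worth making when you write it up: at a $\coii$-puncture there is no analogue of the ``pair of arcs with the same underlying arc but opposite tags'' that occurs at $\coi$-punctures (the fourth bullet of \Cref{def: tagged triangulation}(2) rules it out), so the untagging at a $\coii$-puncture is simply ``erase all notches''; the self-folded configuration there arises not from such a pair but from the genuine loop-plus-radius picture already present as ordinary arcs in $T$. Keeping this distinction explicit will make the cardinality-preservation step and the flip analysis cleaner.
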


Edges of the dual graph $\mathbf{E}^{\bowtie}(\S, \M, c)$ (i.e., the \emph{flip graph}) of $\tagcom(\S, \M, c)$ are called \emph{flips} (of tagged triangulations). In other words, a flip replaces a tagged arc in a tagged triangulation by a unique new tagged arc such that the new collection forms a different tagged triangulation. \Cref{thm: tagcom pseudomfld} guarantees that a tagged triangulation can be flipped at any tagged arc, that is, $\mathbf{E}^{\bowtie}(\S, \M, c)$ is $n$-regular. See \Cref{fig: triangulated once-punctured digon} for the flip graph of a once-$\coii$-punctured digon. As a comparison, the flip graph of a once-$\coi$-punctured digon is a square \cite[Figure 20]{FST08}.

We can assign a quiver with homotopy $(Q(T), H(T))$ to any tagged triangulation $T$ as follows. Let $T^\circ$ be the collection of (untagged) arcs obtained from $T$ by
\begin{itemize}
    \item replacing all notched tags at $\coii$-punctures by plain ones;
    \item replacing all notched tags at any $\coi$-puncture by plain ones if there are no plain tags at that puncture;
    \item replacing any tagged arc $\beta$ notched at a $\coi$-puncture $b$ by a loop $\gamma$ (based at the other endpoint of $\beta$) enclosing $b$ and $\beta$ if there is another tagged arc $\alpha$ that is plain tagged at $b$ and has the same underlying ordinary arc as $\beta$.
\end{itemize}
It follows easily from \cite[Section 9.1]{FST08} that $T^\circ$ is indeed a maximal collection of compatible arcs. We then define $(Q(T), H(T)) \coloneqq (Q(T^\circ), H(T^\circ))$ where the latter is defined in \Cref{def: quiver with 2-cycle of T}.

The following theorem shows that mutations of $(Q(T), H(T))$ are compatible with flips of tagged triangulations.

\begin{theorem}\label{thm: mutation and flip}
    Let $T = \{1, \dots, n\}$ be a tagged triangulation of $(\S, \M, c)$ and $T'$ be the flip of $T$ at a tagged arc $k\in T$. Then $(Q(T'),H(T'))=\mu_k(Q(T),H(T))$.
\end{theorem}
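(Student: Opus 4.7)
The plan is to establish the theorem by local analysis at the arc $k$, following the strategy of the Fomin--Shapiro--Thurston flip/mutation correspondence but with careful attention to the new phenomena introduced by $\coii$-punctures. First I would reduce to the untagged setting by passing from $T$ and $T'$ to their associated collections $T^\circ$ and $(T')^\circ$. Using the rules defining the superscript $\circ$, one verifies case by case that $(T')^\circ$ is obtained from $T^\circ$ by a local replacement of $k$ (or of the self-folded arrangement containing $k$) by the corresponding new arc. Thus $(Q(T), H(T))$ and $(Q(T'), H(T'))$ depend only on $T^\circ$ and $(T')^\circ$, and the problem becomes a comparison, within a small neighborhood of $k$, between $\mu_k$ applied to the local quiver with homotopy from $T^\circ$ and the local quiver with homotopy produced by the modified arc in $(T')^\circ$. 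All arrows and relators not incident to $k$ are untouched by both operations, so the question is genuinely local.

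Next I would carry out a finite case analysis according to the puzzle pieces of \Cref{fig: puzzle pieces and their quivers} and \Cref{fig: self-folded II puncture} meeting the arc $k$. The cases split into: $k$ is a shared edge of two non-self-folded triangles, which further subdivides by whether the surrounding vertices involve $\coi$-punctures (where the cycles $C_p$ of \Cref{def: homotopy on Q(T)} generate $\widetilde H(T)$) or $\coii$-punctures (where $2$-cycles from $P_4$ may already be present), and by whether two neighboring vertices coincide; $k$ is an outer arc of a self-folded triangle enclosing a puncture of either color; or $k$ is an internal arc of a self-folded triangle, realized in $T^\circ$ via the tagged rearrangement of \Cref{subsec: tagged triangulation}. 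In each case I would write down $(Q(T), H(T))$ and $(Q(T'), H(T'))$ explicitly in the local piece, perform the pre-mutation $\widetilde\mu_k$ on $(Q(T), H(T))$, identify which newly created $2$-cycles lie in $H'$ (equivalently, whose image under $\varphi$ of \Cref{def: pre-mutation} lies in $H(T)$), and check that after the deletion procedure of \Cref{def: delete 2-cycles} the result is isomorphic, fixing the vertex set, to $(Q(T'), H(T'))$.

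The engine driving the $2$-cycle cancellations is the collection of $3$-cycle generators of $H(T)$ from the puzzle pieces together with the cycles $C_p$. A newly created $2$-cycle $\gamma\,[\beta\alpha]$ in $\widetilde\mu_k(Q(T))$ belongs to $H'$ precisely when $\gamma\beta\alpha \in H(T)$, which one reads off from the local picture: each such relation comes from a puzzle piece adjacent to $k$ or from a small $C_p$ around a $\coi$-puncture at an endpoint of $k$. This matches exactly the deletion rule of \Cref{fig: I-puncture digon delete 2-cycle} and leaves surviving only the $2$-cycles mandated by $\coii$-punctures via $P_4$. For the homotopies, \Cref{remark: quotient groupoid invariant} combined with \Cref{prop: fund grp surface} (which gives $\pi(Q(T))/H(T)\cong\pi_1(\S\setminus\punc_{\coii})$ canonically, independent of $T$) implies that once the quivers agree, the homotopies $H^\dag$ and $H(T')$ have the same quotient groupoid; one then checks in each local piece that they are generated by the same relators, namely the new puzzle-piece $3$-cycles and the updated cycles $C_p$ around the adjacent $\coi$-punctures after flip, by tracing the generators under $\varphi$ and $\psi$.

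The main obstacle I expect is the case of a $\coii$-puncture living in a once-punctured monogon or digon adjacent to $k$, and the case of a flip at the internal arc of a self-folded triangle enclosing a $\coii$-puncture, where the tagged rearrangement in the definition of $T^\circ$ changes the local picture substantially. In these configurations the pre-mutation creates several new $2$-cycles that must be matched against the surviving $2$-cycles of $P_4$ in $Q(T')$, while the generators $C_p$ around any nearby $\coi$-punctures transform in a nontrivial way. Making these local identifications precise, and verifying that no extraneous $2$-cycle is either cancelled or retained, is the most delicate part of the argument; I would handle it by drawing each of the finitely many local diagrams and reading off the generators of $H^\dag$ from the explicit kernel description in \Cref{def: mutation} and comparing with those of $H(T')$ from \Cref{def: homotopy on Q(T)}.
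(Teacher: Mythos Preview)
Your proposal is essentially the paper's approach: reduce to $T^\circ$, argue locally via the puzzle-piece decomposition, verify in each local configuration that the pre-mutation followed by $2$-cycle deletion matches $Q(T')$, and use the combination of \Cref{remark: quotient groupoid invariant} with \Cref{prop: fund grp surface} to pin down the homotopies.

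Two small points of comparison. First, the paper organizes the case analysis differently: rather than by the type of arc (shared edge, outer loop, enclosed arc), it observes that $\gamma$ and $\gamma'$ in $T^\circ$ and $(T')^\circ$ are never enclosed arcs, and then splits into ``flip inside a puzzle piece'' versus ``flip at an arc shared by two puzzle pieces,'' enumerating all pairs $(P_i,P_j)$. This makes the exhaustion of cases transparent and reuses the involutivity of $\mu_k$ to discharge several cases as inverses of others. Second, for the homotopy equality the paper does \emph{not} check that $H^\dag$ and $H(T')$ have the same generators; since $H^\dag$ is defined only as a kernel, producing generators for it would be extra work. Instead the paper verifies only the one-sided inclusion $f(Y(T'))\subseteq H^\dag$, and then deduces $f(H(T'))=H^\dag$ from the fact that both quotients are identified with $\pi_1(\S\setminus\punc_{\coii})$. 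Your plan to ``check they are generated by the same relators'' would also work but is more laborious; adopting the one-inclusion shortcut will save you effort in each local case.
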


\begin{proof}
    Write $(Q, H) = (Q(T), H(T))$. Adopting notations from \Cref{section: quivers with homotopies}, we denote
    \[
        (Q^\dag, H^\dag) = (Q(T)^\dag, H(T)^\dag) = \mu_k(Q(T), H(T)).
    \]
    We show that there is a quiver isomorphism $f \colon Q(T')\rightarrow Q(T)^\dag$ (fixing vertices) such that $f(Y(T'))\subseteq H^\dag$ where $Y(T')$ is a generating set of $H(T')$. The quiver isomorphism $f$ induces an isomorphism between groupoids $\pi(Q(T)^\dag)$ and $\pi(Q(T'))$. Since the quotient groupoid is invariant under mutation (\Cref{remark: quotient groupoid invariant}), further by \Cref{prop: fund grp surface}, we have
    \[
        \pi(Q(T)^\dag)/H(T)^\dag = \pi(Q(T))/H(T) = \pi_1(\S\setminus \punc_{\coii}) = \pi(Q(T'))/H(T').
    \]
    It then follows that $f(H(T')) = H(T)^\dag$.

    We construct $f$ through case-by-case analysis based on the decomposition of $T^\circ$ into puzzle pieces in \Cref{fig: puzzle pieces and their quivers} and \Cref{fig: self-folded II puncture}. Compare the untagged triangulations $T^\circ$ with $(T')^\circ$. Then there are arcs $\gamma\in T^\circ$ and $\gamma'\in (T')^\circ$ such that $T^\circ\setminus \{\gamma\} = (T')^\circ \setminus \{\gamma'\}$. Note that $\gamma$ and $\gamma'$ cannot be an enclosed arc. We say that the flip between $T$ and $T'$ is at $\gamma$ or $\gamma'$.

    Each flip is at an arc inside a puzzle piece or at an arc shared by two puzzle pieces. In any case, only the quiver arrows incident to the arcs in the puzzle piece(s) are affected by the flip. This allows for a computation local to (a neighborhood of) the puzzle piece(s) containing the arc of flip.

    In the following, we run through all possible configurations surrounding the arc of flip. In each case, we compute $Q^\dag$, which typically proceeds with first a pre-mutation $\widetilde \mu_k$ and a reduction (deletion of $2$-cycles described in \Cref{def: delete 2-cycles}). Once $Q^\dag$ is obtained, in every case the quiver isomorphism $f\colon Q(T') \rightarrow Q^\dag$ is straightforward. After identifying the quivers by $f$, we need to check every cycle in $Y(T')$ belongs to $H^\dag$. Some cycles in $Y(T')$ are ``distant'' from the arc of flip. They lie in $H^\dag$ since they can be regarded as cycles in $Q(T)$ and belong to $H(T)$. Other cycles in $Y(T')$ are new and we state explicitly that they belong to $H^\dag$ which can be verified directly.

    \emph{Case 1.} The flip occurs inside a puzzle piece. There are two sub-cases.

    \emph{Case 1.1.} The flip is at arc $3$ in the puzzle piece $P_2$ of \Cref{fig: puzzle pieces and their quivers}. There are three situations depicted in \Cref{fig: case 1.1}. In \emph{Case 1.1.1}, the valency of the puncture $p$ is at least three.

    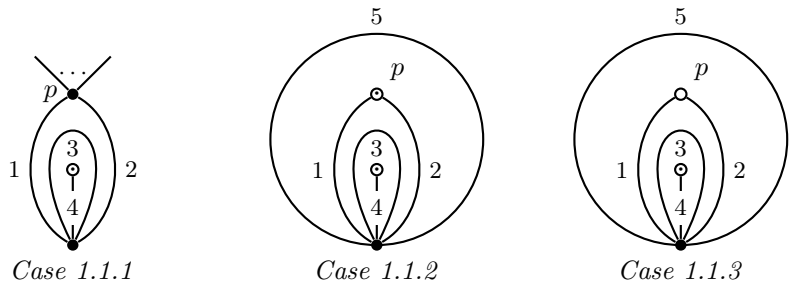
\begin{figure}[ht]
        \centering
        \begin{tikzpicture}
            \begin{scope}[shift={(0,0)}]
                \node[generic, label=below:\emph{Case 1.1.1}] (B) at (0,0) {};
                \node[coi] (M) at (0,1) {};
                \node[generic, label=above:\small{$\,\cdots$}, label=left:$p$] (T) at (0,2) {};

                \draw[thick] (B) to node[fill=white, midway]{\small{4}} (M);
                \draw[thick] (B) .. controls (-1,2) and (1,2) .. node[below, midway]{\small{3}} (B);
                \draw[thick, bend left=60] (B) to node[midway, left]{\small{1}} (T);
                \draw[thick, bend right=60] (B) to node[midway, right]{\small{2}} (T);
                \draw[thick] (T) to (-0.5,2.5);
                \draw[thick] (T) to (0.5,2.5);
            \end{scope}

            \begin{scope}[shift={(4,0)}]
                \node[generic, label=below:\emph{Case 1.1.2}] (B) at (0,0) {};
                \node[coi] (M) at (0,1) {};
                \node[coi, label=above right:$p$] (T) at (0,2) {};

                \draw[thick] (B) to node[fill=white, midway]{\small{4}} (M);
                \draw[thick] (B) .. controls (-1,2) and (1,2) .. node[below, midway]{\small{3}} (B);
                \draw[thick, bend left=60] (B) to node[midway, left]{\small{1}} (T);
                \draw[thick, bend right=60] (B) to node[midway, right]{\small{2}} (T);
                \draw[thick] (B) arc[start angle=-90, end angle=270, radius=1.4] node[midway, above]{\small{5}};
            \end{scope}

            \begin{scope}[shift={(8,0)}]
                \node[generic, label=below:\emph{Case 1.1.3}] (B) at (0,0) {};
                \node[coi] (M) at (0,1) {};
                \node[coii, label=above right:$p$] (T) at (0,2) {};

                \draw[thick] (B) to node[fill=white, midway]{\small{4}} (M);
                \draw[thick] (B) .. controls (-1,2) and (1,2) .. node[below, midway]{\small{3}} (B);
                \draw[thick, bend left=60] (B) to node[midway, left]{\small{1}} (T);
                \draw[thick, bend right=60] (B) to node[midway, right]{\small{2}} (T);
                \draw[thick] (B) arc[start angle=-90, end angle=270, radius=1.4] node[midway, above]{\small{5}};
            \end{scope}
        \end{tikzpicture}
        \caption{Three triangulations of \emph{Case 1.1}.}
        \label{fig: case 1.1}
    \end{figure}

    \emph{Case 1.1.1.}
    \begin{equation*}\label{eq: 1.1.1}
        \begin{tikzpicture}
            \begin{scope}[shift={(0,0)}]
                \node (A1) at (0,1) {1};
                \node (A2) at (2,1) {2};
                \node (A3) at (1,2) {3};
                \node[label=below:{$Q(T)$}] (A4) at (1,0) {4};
                \draw[->] (A1) to node[above left]{$a$} (A3);
                \draw[->] (A3) to node[above right]{$b$} (A2);
                \draw[->] (A2) to node[above]{$c$} (A1);
                \draw[->] (A1) to node[below left]{$d$} (A4);
                \draw[->] (A4) to node[below right]{$e$} (A2);
            \end{scope}

            \draw[->] (3,1) -- node[above]{$\widetilde{\mu}_3$} (4,1);
            
            \begin{scope}[shift={(5,0)}]
                \node (A1) at (0,1) {1};
                \node (A2) at (2,1) {2};
                \node (A3) at (1,2) {3};
                \node (A4) at (1,0) {4};
                \draw[->] (A3) -- (A1);
                \draw[->] (A2) -- (A3);
                \draw[->, bend left=10] (A2) to node[below]{$c$}(A1);
                \draw[->, bend left=10] (A1) to node[above]{\small{${[ba]}$}} (A2);
                \draw[->] (A1) -- (A4);
                \draw[->] (A4) -- (A2);
            \end{scope}

            \draw[->] (8,1) -- node[above]{\small{reduction}} (9,1);

            \begin{scope}[shift={(10,0)}]
                \node (A1) at (0,1) {1};
                \node (A2) at (2,1) {2};
                \node (A3) at (1,2) {3};
                \node[label=below:{$Q^\dag = Q(T')$}] (A4) at (1,0) {4};
                \draw[->] (A3) to node[above left]{$a^\star$} (A1);
                \draw[->] (A2) to node[above right]{$b^\star$} (A3);
                \draw[->] (A1) to node[below left]{$d$} (A4);
                \draw[->] (A4) to node[below right]{$e$} (A2);
            \end{scope}
        \end{tikzpicture}
    \end{equation*}
    The $2$-cycle $c[ba]$ is deleted in the reduction. The $4$-cycle $eda^\star b^\star$ is in $H^\dag$ because it is mapped to $eda^{-1}b^{-1} = edc(bac)^{-1}$ in $H$. The cycle $C_p\in Y(T')$ (when $p$ is $\coi$-colored) has a subword $a^\star b^\star$. Then $C_p\in H^\dag$ since $C_p\mid_{a^\star b^\star = c}$ is in $H$.

    \emph{Case 1.1.2.}
    \begin{equation*}\label{eq: 1.1.2}
        \begin{tikzpicture}
            \begin{scope}[shift={(0,0)}]
                \node (A1) at (0,1) {1};
                \node (A2) at (2,1) {2};
                \node (A3) at (1,2) {3};
                \node[label=below:$Q(T)$] (A4) at (1,0) {4};
                \node (A5) at (1,3) {5};
                \draw[->] (A1) to node[below right]{$a$} (A3);
                \draw[->] (A3) to node[below left]{$b$} (A2);
                \draw[->] (A1) to node[below left]{$d$} (A4);
                \draw[->] (A4) to node[below right]{$e$} (A2);
                \draw[->] (A2) to node[right]{$h$} (A5);
                \draw[->] (A5) to node[left]{$g$} (A1);
            \end{scope}

            \draw[->] (3,1) -- node[above]{$\widetilde{\mu}_3=\mu_3$} (4,1);
            
            \begin{scope}[shift={(5,0)}]
                \node (A1) at (0,1) {1};
                \node (A2) at (2,1) {2};
                \node (A3) at (1,2) {3};
                \node[label=below:{$Q^\dag=Q(T')$}] (A4) at (1,0) {4};
                \node (A5) at (1,3) {5};
                \draw[->] (A3) to node[right]{$a^\star$} (A1);
                \draw[->] (A2) to node[left, pos=0.3]{$b^\star$} (A3);
                \draw[->] (A1) to node[below]{$[ba]$} (A2);
                \draw[->] (A1) to node[below left]{$d$} (A4);
                \draw[->] (A4) to node[below right]{$e$} (A2);
                \draw[->] (A2) to node[right]{$h$} (A5);
                \draw[->] (A5) to node[left]{$g$} (A1);
            \end{scope}
        \end{tikzpicture}
    \end{equation*}
    Now $a^\star b^\star ed$ and $[ba]gh$ are new cycles in $Y(T')$. They belong to $H^\dag$ since $ba\sim_H ed$ and $(ab)^{-1} \sim_H gh$.

    \emph{Case 1.1.3.}
    \begin{equation*}\label{eq: 1.1.3}
        \begin{tikzpicture}
            \begin{scope}[shift={(0,0)}]
                \node (A1) at (0,1) {1};
                \node (A2) at (2,1) {2};
                \node (A3) at (1,2) {3};
                \node[label=below:$Q(T)$] (A4) at (1,0) {4};
                \node (A5) at (1,3) {5};
                \draw[->] (A1) to node[right]{$a$} (A3);
                \draw[->] (A3) to node[left]{$b$} (A2);
                \draw[->] (A1) to node[below left]{$d$} (A4);
                \draw[->] (A4) to node[below right]{$e$} (A2);
                \draw[->] (A2) to node[right]{$h$} (A5);
                \draw[->] (A5) to node[left]{$g$} (A1);
                \draw[->, bend left=10] (A1) to node[above]{$c_1$} (A2);
                \draw[->, bend left=10] (A2) to node[below]{$c_2$} (A1);
            \end{scope}

            \draw[->] (3,1) -- node[above]{$\widetilde{\mu}_3$} (4,1);
            
            \begin{scope}[shift={(5,0)}]
                \node (A1) at (0,1) {1};
                \node (A2) at (2,1) {2};
                \node (A3) at (1,2) {3};
                \node (A4) at (1,0) {4};
                \node (A5) at (1,3) {5};
                \draw[->] (A3) to node[right]{$a^\star$} (A1);
                \draw[->] (A2) to node[left, pos=0.3]{$b^\star$} (A3);
                \draw[->] (A1) to (A2);
                \draw[->, bend left=15] (A1) to (A2);
                \draw[->, bend left=15] (A2) to (A1);
                \draw[->] (A1) to node[below left]{$d$} (A4);
                \draw[->] (A4) to node[below right]{$e$} (A2);
                \draw[->] (A2) to node[right]{$h$} (A5);
                \draw[->] (A5) to node[left]{$g$} (A1);
            \end{scope}

            \draw[->] (8,1) -- node[above]{\small{reduction}} (9,1);
            
            \begin{scope}[shift={(10,0)}]
                \node (A1) at (0,1) {1};
                \node (A2) at (2,1) {2};
                \node (A3) at (1,2) {3};
                \node[label=below:{$Q^\dag=Q^\dag$}] (A4) at (1,0) {4};
                \node (A5) at (1,3) {5};
                \draw[->] (A3) to node[right]{$a^\star$} (A1);
                \draw[->] (A2) to node[left, pos=0.3]{$b^\star$} (A3);
                \draw[->] (A1) to node[below]{$c_1$} (A2);
                \draw[->] (A1) to node[below left]{$d$} (A4);
                \draw[->] (A4) to node[below right]{$e$} (A2);
                \draw[->] (A2) to node[right]{$h$} (A5);
                \draw[->] (A5) to node[left]{$g$} (A1);
            \end{scope}
        \end{tikzpicture}
    \end{equation*}
    The cycles $c_2ba$ and $c_2ed$ are in $H$. Thus the reduction deletes the $2$-cycle $c_2[ba]$. The only new cycle in $Y(T')$ is $eda^\star b^\star$. It belongs to $H^\dag$ since $ed \sim_H c_2^{-1} \sim_H ba$.

    \emph{Case 1.2.} The flip is at arc $2$ or arc $4$ in the third puzzle piece of \Cref{fig: puzzle pieces and their quivers}. We demonstrate the flip at $4$; the other case is similar. The mutation is as follows.
    \[
        \begin{tikzpicture}
            \begin{scope}[shift={(0,0)}]
            \node (A1) at (0,0) {1};
            \node (A2) at (1,-1) {2};
            \node (A3) at (-1,1) {3};
            \node (A4) at (-1,-1) {4};
            \node (A5) at (1,1) {5};
            \draw[->] (A1) -- node[above left]{$a$} (A4);
            \draw[->] (A1) -- node[above left]{$b$} (A5);
            \draw[->] (A2) -- node[above right]{$c$} (A1);
            \draw[->] (A3) -- node[above right]{$d$} (A1);
            \draw[->] (A4) -- node[left]{$e$} (A3);
            \draw[->] (A4) to node[above]{$f$} node[below]{$Q(T)$} (A2);
            \draw[->] (A5) -- node[above]{$g$} (A3);
            \draw[->] (A5) -- node[right]{$h$} (A2);
            \end{scope}

            \draw[->] (2,0) -- node[above]{${\widetilde\mu}_4$} (3,0);

            \begin{scope}[shift={(5,0)}]
            \node (A1) at (0,0) {1};
            \node (A2) at (1,-1) {2};
            \node (A3) at (-1,1) {3};
            \node (A4) at (-1,-1) {4};
            \node (A5) at (1,1) {5};
            \draw[->] (A4) -- (A1);
            \draw[->] (A1) -- (A5);
            \draw[->, bend left] (A2) to (A1);
            \draw[->, bend left] (A3) to (A1);
            \draw[->] (A3) -- (A4);
            \draw[->] (A2) -- (A4);
            \draw[->] (A5) -- (A3);
            \draw[->] (A5) -- (A2);
            \draw[->, bend left=10] (A1) to (A3);
            \draw[->, bend left=10] (A1) to (A2);
            \end{scope}

            \draw[->] (7,0) -- node[above]{\small{reduction}} (8,0);

            \begin{scope}[shift={(10,0)}]
            \node (A1) at (0,0) {1};
            \node (A2) at (1,-1) {2};
            \node (A3) at (-1,1) {3};
            \node (A4) at (-1,-1) {4};
            \node (A5) at (1,1) {5};
            \draw[->] (A4) -- node[above]{$a^\star$} (A1);
            \draw[->] (A1) -- node[above left]{$b$} (A5);
            \draw[->] (A3) -- node[left]{$e^\star$} (A4);
            \draw[->] (A2) to node[above]{$f^\star$} node[below]{$Q^\dag = Q(T')$} (A4);
            \draw[->] (A5) -- node[above]{$g$} (A3);
            \draw[->] (A5) -- node[right]{$h$} (A2);
            \end{scope}
        \end{tikzpicture}
    \]
    We need to check that the two $4$-cycles in $Q(T')$ belong to $H^\dag$. For example, the cycle $a^\star e^\star g b$ is in $H^\dag$ since $a^\star e^\star \sim_H d \sim_H (gb)^{-1}$. The other cycle is verified similarly.

    \emph{Case 2.} The flip is at an shared arc of two puzzle pieces. All possible (unordered) pairs of puzzle pieces with a shared arc are $(P_1, P_1)$, $(P_1, P_2)$, $(P_1, P_3)$, $(P_1, P_4)$, $(P_2, P_2)$, $(P_2, P_3)$, $(P_2, P_4)$, $(P_3, P_3)$ (the union is a sphere with five punctures), $(P_3, P_4)$ (the union is a sphere with four punctures), $(P_4, P_4)$ (the union is a sphere with three punctures).

    \emph{Case 2.1.} $(P_1, P_1)$. We analyze by how many arcs are shared by the two copies of $P_1$. Suppose that they share only one arc. The mutation depends on the valencies of the four marked points and their colors. We demonstrate below the case where the valency of each marked point is at least three. The rest cases can be computed accordingly. The mutation is as follows.
    \[
        \begin{tikzpicture}
            \begin{scope}[shift={(0,0)}]
            \node (A1) at (0,0) {1};
            \node (A2) at (1,-1) {2};
            \node (A3) at (-1,1) {3};
            \node (A4) at (-1,-1) {4};
            \node (A5) at (1,1) {5};
            \draw[->] (A1) -- (A4);
            \draw[->] (A1) -- (A5);
            \draw[->] (A2) -- (A1);
            \draw[->] (A3) -- (A1);
            \draw[->] (A4) -- (A3);
            \draw[->] (A5) -- (A2);

            \node[label=below:{$Q(T)$}] at (0,-1.5) {};
            \end{scope}

            \draw[->] (2,0) -- node[above]{${\widetilde\mu}_1$} (3,0);

            \begin{scope}[shift={(5,0)}]
            \node (A1) at (0,0) {1};
            \node (A2) at (1,-1) {2};
            \node (A3) at (-1,1) {3};
            \node (A4) at (-1,-1) {4};
            \node (A5) at (1,1) {5};
            \draw[->] (A4) -- (A1);
            \draw[->] (A5) -- (A1);
            \draw[->, bend left=15] (A3) to (A4);
            \draw[->, bend left=15] (A4) to (A3);
            \draw[->] (A2) -- (A4);
            \draw[->] (A3) -- (A5);
            \draw[->, bend left=15] (A5) to (A2);
            \draw[->, bend left=15] (A2) to (A5);
            \draw[->] (A1) -- (A2);
            \draw[->] (A1) -- (A3);
            \end{scope}

            \draw[->] (7,0) -- node[above]{\small{reduction}} (8,0);

            \begin{scope}[shift={(10,0)}]
            \node (A1) at (0,0) {1};
            \node (A2) at (1,-1) {2};
            \node (A3) at (-1,1) {3};
            \node (A4) at (-1,-1) {4};
            \node (A5) at (1,1) {5};
            \draw[->] (A4) -- (A1);
            \draw[->] (A5) -- (A1);
            \draw[->] (A2) -- (A4);
            \draw[->] (A3) -- (A5);
            \draw[->] (A1) -- (A2);
            \draw[->] (A1) -- (A3);

            \node[label=below:{$Q^\dag=Q(T')$}] at (0,-1.5) {};
            \end{scope}
        \end{tikzpicture}
    \]
    We need to check that the two $3$-cycles in $Q(T')$ belong to $H^\dag$. But they are just of the form $a^\star b^\star [ba]$, thus in $H^\dag$.

    If exactly two edges are shared and the puncture in the middle is $\coi$-colored, then the mutation is inverse to that of \emph{Case 1.1}. So this situation is resolved because mutation is an involution (\Cref{prop: mutation involutive}). If the puncture is $\coii$-colored, the flip is as follows.
    \[
        \begin{tikzpicture}[scale=1.2]
            \begin{scope}[shift={(0,0)}]
                \node[generic] (B) at (0,0) {};
                \node[coii] (M) at (0,1) {};
                \node[generic] (T) at (0,2) {};

                \draw[thick] (B) to node[fill=white, midway]{\small{4}} (M);
                \draw[thick] (M) to node[fill=white, midway]{\small{3}}(T);
                \draw[thick, bend left=60] (B) to node[midway, left]{\small{1}} (T);
                \draw[thick, bend right=60] (B) to node[midway, right]{\small{2}} (T);
            \end{scope}

            \draw[<->] (2,1) to node[midway, above]{flip} (3,1);
            
            \begin{scope}[shift={(5,0)}]
                \node[generic] (B) at (0,0) {};
                \node[coii] (M) at (0,1) {};
                \node[generic] (T) at (0,2) {};

                \draw[thick] (B) to node[fill=white, midway]{\small{4}} (M);
                \draw[thick] (B) .. controls (-1,2) and (1,2) .. node[above, midway]{\small{3}} (B);
                \draw[thick, bend left=60] (B) to node[midway, left]{\small{1}} (T);
                \draw[thick, bend right=60] (B) to node[midway, right]{\small{2}} (T);
            \end{scope}    
        \end{tikzpicture}
    \]
    Then the mutation is as follows.
    \[
        \begin{tikzpicture}
            \begin{scope}[shift={(0,0)}]
                \node (A1) at (0,1) {1};
                \node (A2) at (2,1) {2};
                \node (A3) at (1,2) {3};
                \node[] (A4) at (1,0) {4};
                \draw[<-] (A1) to node[above left]{$a$} (A3);
                \draw[<-] (A3) to node[above right]{$b$} (A2);
                \draw[->] (A1) to node[below left]{$d$} (A4);
                \draw[->] (A4) to node[below right]{$e$} (A2);
                \draw[->, bend left=10] (A3) to node[right]{$c_1$} (A4);
                \draw[->, bend left=10] (A4) to node[left]{$c_2$} (A3);
            \end{scope}

            \draw[->] (3,1) -- node[above]{$\widetilde{\mu}_3$} (4,1);
            
            \begin{scope}[shift={(5,0)}]
                \node (A1) at (0,2) {1};
                \node (A2) at (2,2) {2};
                \node (A3) at (1,1) {3};
                \node (A4) at (1,0) {4};
                \draw[<-] (A3) to (A1);
                \draw[<-] (A2) to (A3);
                \draw[->] (A2) to (A1);
                \draw[<-, bend left=10] (A3) to (A4);
                \draw[<-, bend left=10] (A4) to (A3);
                \draw[->, bend left=5] (A1) to (A4);
                \draw[->, bend left=15] (A4) to (A1);
                \draw[->, bend left=15] (A2) to (A4);
                \draw[->, bend left=5] (A4) to (A2);
            \end{scope}

            \draw[->] (8,1) -- node[above]{\small{reduction}} (9,1);
            
            \begin{scope}[shift={(10,0)}]
                \node (A1) at (0,2) {1};
                \node (A2) at (2,2) {2};
                \node (A3) at (1,1) {3};
                \node (A4) at (1,0) {4};
                \draw[<-] (A3) to node[left]{$a^\star$} (A1);
                \draw[<-] (A2) to node[right]{$b^\star$}(A3);
                \draw[->] (A2) to node[below]{$[ab]$}(A1);
                \draw[<-, bend left=10] (A3) to node[right]{$c_1^\star$} (A4);
                \draw[<-, bend left=10] (A4) to node[left]{$c_2^\star$} (A3);
            \end{scope}    
        \end{tikzpicture}
    \]
    The $3$-cycles $dac_2$ and $ec_1b$ are in $H$. So the $2$-cycles $d[ac_2]$ and $e[c_1b]$ are deleted.

    If three edges are shared, then the surface is either a sphere with three punctures (all of which are $\coii$-colored) or a torus with one puncture. The former situation is inverse to \emph{Case 2.10} while the latter is shown in \Cref{subsec: torus one mark}.

    \emph{Case 2.2.} $(P_1, P_2)$. See \Cref{fig: flip shared arc P1 P2} for the flip. The mutation is as follows.
    \[
        \begin{tikzpicture}
            \begin{scope}[shift={(0,0)}]
                \node[] (A1) at (-2,0) {1};
                \node[] (A2) at (0,0) {2};
                \node[] (A3) at (-1,1) {3};
                \node[] (A4) at (-1,-1) {4};
                \node[] (A5) at (1,-1) {5};
                \node[] (A6) at (1,1) {6};

                \draw[->] (A1) to (A2);
                \draw[->] (A2) to (A3);
                \draw[->] (A2) to (A4);
                \draw[->] (A2) to (A5);
                \draw[->] (A3) to (A1);
                \draw[->] (A4) to (A1);
                \draw[->] (A6) to (A2);
                \draw[->] (A5) to (A6);

                \node[label=below:{$Q(T)$}] at (0,-1.5) {};
            \end{scope}

            \draw[->] (1.5,0) to node[above]{$\tilde\mu_2$}(2.5,0);

            \begin{scope}[shift={(5,0)}]
                \node[] (A1) at (-2,0) {1};
                \node[] (A2) at (0,0) {2};
                \node[] (A3) at (-1,1) {3};
                \node[] (A4) at (-1,-1) {4};
                \node[] (A5) at (1,-1) {5};
                \node[] (A6) at (1,1) {6};

                \draw[<-] (A1) to (A2);
                \draw[<-] (A2) to (A3);
                \draw[<-] (A2) to (A4);
                \draw[<-] (A2) to (A5);
                \draw[->, bend left=10] (A3) to (A1);
                \draw[->, bend left=10] (A1) to (A3);
                \draw[->, bend left=10] (A4) to (A1);
                \draw[->, bend left=10] (A1) to (A4);
                \draw[<-] (A6) to (A2);
                \draw[->,bend left=10] (A5) to (A6);
                \draw[->,bend right=10] (A5) to (A6);
                \draw[->] (A1) to (A5);
                \draw[->] (A6) to (A3);
                \draw[->, bend left=15] (A6) to (A4);
            \end{scope}

            \draw[->] (6.5,0) to node[above]{\small{reduction}}(7.5,0);

            \begin{scope}[shift={(10,0)}]
                \node[] (A1) at (-2,0) {1};
                \node[] (A2) at (0,0) {2};
                \node[] (A3) at (-1,1) {3};
                \node[] (A4) at (-1,-1) {4};
                \node[] (A5) at (1,-1) {5};
                \node[] (A6) at (1,1) {6};

                \draw[<-] (A1) to (A2);
                \draw[<-] (A2) to (A3);
                \draw[<-] (A2) to (A4);
                \draw[<-] (A2) to (A5);
                \draw[<-] (A6) to (A2);
                \draw[->] (A1) to (A5);
                \draw[->] (A6) to (A3);
                \draw[->, bend left=15] (A6) to (A4);

                \node[label=below:{$Q^\dag=Q(T')$}] at (0,-1.5) {};
            \end{scope}  
        \end{tikzpicture}
    \]
    The three $3$-cycles in $Y(T')$ are new. They are all in $H^\dag$ because all are of the form $a^\star b^\star [ba]$.

    It is possible that $5$ and $1$ are glued. Then there is a $2$-cycle in $\widetilde Q(T)$ between $1$ and $2$. Depending on the lower left puncture's color, this $2$-cycle may be deleted or not in $Q(T)$. The computation of mutation is similar.

    \begin{figure}[ht]
        \centering
        \begin{tikzpicture}[scale=0.6]
            \begin{scope}[shift={(0,0)}]
                \node[coi] (C) at (0,0) {};
                \node[generic] (L) at (-3,{-sqrt(3)}) {};
                \node[generic] (R) at (3,{-sqrt(3)}) {};
                \node[generic] (T) at (0,{sqrt(12)}) {};

                \draw[thick] (L) to node[left]{1} (T);
                \draw[thick] (T) to node[right]{6} (R);
                \draw[thick] (R) to node[below]{5} (L);

                \draw[thick, bend left=15] (T) to node[fill=white]{$3$}(C);
                \draw[thick, bend right=15] (T) to node[fill=white]{$4$} node[pos=0.8]{\small{$\bowtie$}}(C);
                \draw[thick] (T) .. controls ({3/2},{-sqrt(3)/2}) .. node[right, midway]{2} (L);
            \end{scope}

            \draw[<->] (4,0) -- node[above]{flip} (6,0);

            \begin{scope}[shift={(10,0)}]
                \node[coi] (C) at (0,0) {};
                \node[generic] (L) at (-3,{-sqrt(3)}) {};
                \node[generic] (R) at (3,{-sqrt(3)}) {};
                \node[generic] (T) at (0,{sqrt(12)}) {};

                \draw[thick] (L) to node[left]{1} (T);
                \draw[thick] (T) to node[right]{6} (R);
                \draw[thick] (R) to node[below]{5} (L);

                \draw[thick, bend left=15] (T) to node[fill=white]{$3$}(C);
                \draw[thick, bend right=15] (T) to node[fill=white]{$4$} node[pos=0.8]{\small{$\bowtie$}}(C);
                \draw[thick] (T) .. controls ({-3/2},{-sqrt(3)/2}) .. node[left, midway]{2} (R);
            \end{scope}
        \end{tikzpicture}
        \caption{Flip between $T$ and $T'$ at arc $2$.}
        \label{fig: flip shared arc P1 P2}
    \end{figure}
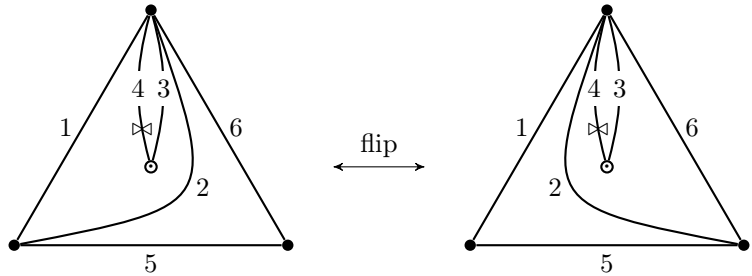

    \emph{Case 2.3.} $(P_1, P_3)$. The flip at the shared arc (labeled by $1$) results in two copies of $P_2$ glued along a common arc. The mutation of $(Q, H)$ is as follows.
    \[
        \begin{tikzpicture}
            \begin{scope}[shift={(0,0)}]
                \node (A1) at (0,0) {1};
                \node (A2) at (1,-1) {2};
                \node (A3) at (-1,1) {3};
                \node (A4) at (-1,-1) {4};
                \node (A5) at (1,1) {5};
                \node (A6) at (-1,2) {6};
                \node (A7) at (1,2) {7};
                \draw[->] (A1) -- (A4);
                \draw[->] (A1) -- (A5);
                \draw[->] (A2) -- (A1);
                \draw[->] (A3) -- (A1);
                \draw[->] (A4) -- (A3);
                \draw[->] (A4) -- (A2);
                \draw[->] (A5) -- (A3);
                \draw[->] (A5) -- (A2);
                \draw[->] (A6) -- (A1);
                \draw[->] (A1) -- (A7);
                \draw[->] (A7) -- (A6);
            \end{scope}

            \draw[->] (2,0) -- node[above]{${\widetilde\mu}_1$} (3,0);

            \begin{scope}[shift={(5,0)}]
                \node (A1) at (0,0) {1};
                \node (A2) at (1,-1) {2};
                \node (A3) at (-1,1) {3};
                \node (A4) at (-1,-1) {4};
                \node (A5) at (1,1) {5};
                \node (A6) at (-1,2) {6};
                \node (A7) at (1,2) {7};
                \draw[->] (A4) -- (A1);
                \draw[->] (A5) -- (A1);
                \draw[->] (A1) -- (A2);
                \draw[->] (A1) -- (A3);
                \draw[->, bend left=10] (A4) to (A3);
                \draw[->, bend left=10] (A3) to (A4);
                \draw[->, bend left=10] (A4) to (A2);
                \draw[->, bend left=10] (A2) to (A4);
                \draw[->, bend left=10] (A5) to (A3);
                \draw[->, bend left=10] (A3) to (A5);
                \draw[->, bend left=10] (A5) to (A2);
                \draw[->, bend left=10] (A2) to (A5);
                \draw[->] (A1) -- (A6);
                \draw[->] (A7) -- (A1);
                \draw[->, bend left=10] (A7) to (A6);
                \draw[->, bend left=10] (A6) to (A7);
                \draw[->, bend right] (A6) to (A4);
                \draw[->, bend right] (A2) to (A7);
                \draw[->] (A6) to (A5);
                \draw[->] (A3) to (A7);
            \end{scope}

            \draw[->] (7,0) -- node[above]{\small{reduction}} (8,0);

            \begin{scope}[shift={(10,0)}]
                \node (A1) at (0,0) {1};
                \node (A2) at (1,-1) {2};
                \node (A3) at (-1,1) {3};
                \node (A4) at (-1,-1) {4};
                \node (A5) at (1,1) {5};
                \node (A6) at (-1,2) {6};
                \node (A7) at (1,2) {7};
                \draw[->] (A4) -- (A1);
                \draw[->] (A5) -- (A1);
                \draw[->] (A1) -- (A2);
                \draw[->] (A1) -- (A3);
                \draw[->] (A1) -- (A6);
                \draw[->] (A7) -- (A1);
                \draw[->, bend right] (A6) to (A4);
                \draw[->, bend right] (A2) to (A7);
                \draw[->] (A6) to (A5);
                \draw[->] (A3) to (A7);
            \end{scope}
        \end{tikzpicture}
    \]
    The four $3$-cycles in $Q^\dag$ are in $H(T)^\dag$, as desired.

    \emph{Case 2.4.} $(P_1, P_4)$. This case is inverse to a situation in the case $(P_1, P_1)$ where two edges are shared and their common puncture is $\coii$-colored.
    
    \emph{Case 2.5.} $(P_2, P_2)$. This case is inverse to $(P_1, P_3)$ unless the two copies of $P_2$ glue to get a sphere with four punctures among which two are $\coi$-colored and two are $\coii$-colored. The remaining case is then inverse to a later resolved case $(P_3, P_4)$.

    \emph{Case 2.6.} $(P_2, P_3)$. The flip at the shared arc results in again a gluing of $P_2$ and $P_3$.
    \[
        \begin{tikzpicture}
            \begin{scope}[shift={(0,0)}]
                \node (A1) at (0,0) {1};
                \node (A2) at (1,-1) {2};
                \node (A3) at (-1,1) {3};
                \node (A4) at (-1,-1) {4};
                \node (A5) at (1,1) {5};
                \node (A6) at (-1,2) {6};
                \node (A7) at (1,2) {7};
                \node (A8) at (0,3) {8};
                \draw[->] (A1) -- (A4);
                \draw[->] (A1) -- (A5);
                \draw[->] (A2) -- (A1);
                \draw[->] (A3) -- (A1);
                \draw[->] (A4) -- (A3);
                \draw[->] (A4) -- (A2);
                \draw[->] (A5) -- (A3);
                \draw[->] (A5) -- (A2);
                \draw[->] (A6) -- (A1);
                \draw[->] (A1) -- (A7);
                \draw[->] (A7) -- (A6);
                \draw[->] (A7) -- (A8);
                \draw[->] (A8) -- (A1);
            \end{scope}

            \draw[->] (2,0) -- node[above]{${\widetilde\mu}_1$} (3,0);

            \begin{scope}[shift={(5,0)}]
                \node (A1) at (0,0) {1};
                \node (A2) at (1,-1) {2};
                \node (A3) at (-1,1) {3};
                \node (A4) at (-1,-1) {4};
                \node (A5) at (1,1) {5};
                \node (A6) at (-1,2) {6};
                \node (A7) at (1,2) {7};
                \node (A8) at (0,3) {8};
                \draw[->] (A4) -- (A1);
                \draw[->] (A5) -- (A1);
                \draw[->] (A1) -- (A2);
                \draw[->] (A1) -- (A3);
                \draw[->, bend left=10] (A4) to (A3);
                \draw[->, bend left=10] (A3) to (A4);
                \draw[->, bend left=10] (A4) to (A2);
                \draw[->, bend left=10] (A2) to (A4);
                \draw[->, bend left=10] (A5) to (A3);
                \draw[->, bend left=10] (A3) to (A5);
                \draw[->, bend left=10] (A5) to (A2);
                \draw[->, bend left=10] (A2) to (A5);
                \draw[->] (A1) -- (A6);
                \draw[->] (A7) -- (A1);
                \draw[->, bend left=10] (A7) to (A6);
                \draw[->, bend left=10] (A6) to (A7);
                \draw[->, bend right] (A6) to (A4);
                \draw[->, bend right] (A2) to (A7);
                \draw[->] (A6) to (A5);
                \draw[->] (A3) to (A7);
                \draw[->] (A1) -- (A8);
                \draw[->, bend left=10] (A7) to (A8);
                \draw[->, bend left=10] (A8) to (A7);
                \draw[->, bend right=60] (A8) to (A4);
                \draw[->] (A8) to (A5);
            \end{scope}

            \draw[->] (7,0) -- node[above]{\small{reduction}} (8,0);

            \begin{scope}[shift={(10,0)}]
                \node (A1) at (0,0) {1};
                \node (A2) at (1,-1) {2};
                \node (A3) at (-1,1) {3};
                \node (A4) at (-1,-1) {4};
                \node (A5) at (1,1) {5};
                \node (A6) at (-1,2) {6};
                \node (A7) at (1,2) {7};
                \node (A8) at (0,3) {8};
                \draw[->] (A4) -- (A1);
                \draw[->] (A5) -- (A1);
                \draw[->] (A1) -- (A2);
                \draw[->] (A1) -- (A3);
                \draw[->] (A1) -- (A6);
                \draw[->] (A7) -- (A1);
                \draw[->, bend right] (A6) to (A4);
                \draw[->, bend right] (A2) to (A7);
                \draw[->] (A6) to (A5);
                \draw[->] (A3) to (A7);
                \draw[->] (A1) -- (A8);
                \draw[->] (A8) to (A4);
                \draw[->] (A8) to (A5);
            \end{scope}
        \end{tikzpicture}
    \]
    We get six $3$-cycles in $H^\dag$, which are exactly the corresponding generators of $H(T')$.
    
    \emph{Case 2.7.} $(P_2, P_4)$. This is inverse to a sub-case of \emph{Case 2.2}.

    \emph{Case 2.8.} $(P_3, P_3)$. Two copies of $P_3$ glue to a sphere with five punctures. The flip at the shared arc gives again two pieces $P_3$. The new cycles in $Y(T')$ are all $3$-cycles of the form $a^\star b^\star [ba]$, hence in $H^\dag$.
    \[
        \begin{tikzpicture}
            \begin{scope}[shift={(0,0)}]
            \node (A1) at (0,0) {1};
            \node (A2) at (1,-1) {2};
            \node (A3) at (-1,1) {3};
            \node (A4) at (-1,-1) {4};
            \node (A5) at (1,1) {5};
            \node (A6) at (2,-1) {6};
            \node (A7) at (-2,1) {7};
            \node (A8) at (-2,-1) {8};
            \node (A9) at (2,1) {9};
            
            \draw[->] (A1) -- (A4);
            \draw[->] (A1) -- (A5);
            \draw[->] (A2) -- (A1);
            \draw[->] (A3) -- (A1);
            \draw[->] (A4) -- (A3);
            \draw[->] (A4) -- (A2);
            \draw[->] (A5) -- (A3);
            \draw[->] (A5) -- (A2);
            
            \draw[->] (A1) -- (A8);
            \draw[->] (A1) -- (A9);
            \draw[->] (A6) -- (A1);
            \draw[->] (A7) -- (A1);
            \draw[->] (A8) -- (A7);
            \draw[->, bend right=15] (A8) to (A6);
            \draw[->, bend right=15] (A9) to (A7);
            \draw[->] (A9) -- (A6);
            \end{scope}

            \draw[->] (3,0) -- node[above]{$\mu_1$} (4,0);

            \begin{scope}[shift={(7,0)}]
            \node (A1) at (0,0) {1};
            \node (A2) at (1,-1) {2};
            \node (A3) at (-1,1) {3};
            \node (A4) at (-1,-1) {4};
            \node (A5) at (1,1) {5};
            \node (A6) at (2,-1) {6};
            \node (A7) at (-2,1) {7};
            \node (A8) at (-2,-1) {8};
            \node (A9) at (2,1) {9};

            \draw[<-] (A1) -- (A4);
            \draw[<-] (A1) -- (A5);
            \draw[<-] (A2) -- (A1);
            \draw[<-] (A3) -- (A1);
            \draw[<-] (A1) -- (A8);
            \draw[<-] (A1) -- (A9);
            \draw[<-] (A6) -- (A1);
            \draw[<-] (A7) -- (A1);

            \draw[->] (A7) -- (A4);
            \draw[->, bend left=15] (A7) to (A5);
            \draw[->, bend left=15] (A6) to (A4);
            \draw[->] (A6) -- (A5);

            \draw[->] (A3) -- (A8);
            \draw[->, bend left=15] (A3) to (A9);
            \draw[->, bend left=15] (A2) to (A8);
            \draw[->] (A2) -- (A9);
            \end{scope}
        \end{tikzpicture}
    \]

    \emph{Case 2.9.} $(P_3, P_4)$. The gluing is necessarily a sphere with two $\coi$-punctures and two $\coii$-punctures depicted in \Cref{fig: P3 and P4}. The leftmost and rightmost arcs are glued.
    \begin{figure}[ht]
        \centering
        \begin{tikzpicture}
            \begin{scope}[shift={(0,0)}]
                \draw[thick] (0,2) circle[radius=2];
                \draw[thick] (0,1.5) circle [radius=1.5];
                
                \node[coii] (B) at (0, 0) {};
                \node[coi] (M1) at (-0.75,1.5) {};
                \node[coi] (M2) at (0.75,1.5) {};
                \node[coii] (T) at (0,4) {};

                \draw[thick, bend left=20] (B) to node[pos=0.8]{$\bowtie$} (M1);
                \draw[thick, bend right=20] (B) to (M1);
                \draw[thick, bend left=20] (B) to (M2);
                \draw[thick, bend right=20] (B) to node[pos=0.8]{$\bowtie$} (M2);
            \end{scope}

            \draw[<->] (3.5,2) to node[above]{\small{flip}} (4.5,2);

            \begin{scope}[shift={(8,0)}]
                \draw[thick] (0,2) circle[radius=2];
                
                \node[coii] (B) at (0, 0) {};
                \node[coi] (M1) at (-0.75,1.5) {};
                \node[coi] (M2) at (0.75,1.5) {};
                \node[coii] (T) at (0,4) {};

                \draw[thick, bend left=20] (B) to node[pos=0.8]{$\bowtie$} (M1);
                \draw[thick, bend right=20] (B) to (M1);
                \draw[thick, bend left=20] (B) to (M2);
                \draw[thick, bend right=20] (B) to node[pos=0.8]{$\bowtie$} (M2);
                \draw[thick] (B) to (T);
            \end{scope}
        \end{tikzpicture}
        \caption{A gluing of $P_3$ and $P_4$ and its flip.}
        \label{fig: P3 and P4}
    \end{figure}
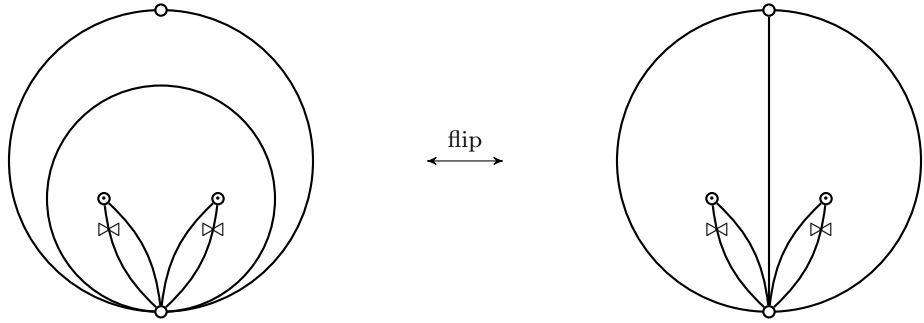
    The mutation is as follows.
    \[
        \begin{tikzpicture}
            \begin{scope}[shift={(0,0)}]
                \node (A1) at (0,0) {1};
                \node (A2) at (1,-1) {2};
                \node (A3) at (-1,1) {3};
                \node (A4) at (-1,-1) {4};
                \node (A5) at (1,1) {5};
                \node (A6) at (0,2) {6};
                
                \draw[->] (A1) -- (A4);
                \draw[->] (A1) -- (A5);
                \draw[->] (A2) -- (A1);
                \draw[->] (A3) -- (A1);
                \draw[->] (A4) -- (A3);
                \draw[->] (A4) -- (A2);
                \draw[->] (A5) -- (A3);
                \draw[->] (A5) -- (A2);
                \draw[->, bend left=10] (A6) to (A1);
                \draw[->, bend left=10] (A1) to (A6);
            \end{scope}

            \draw[->] (2,0) -- node[above]{${\widetilde\mu}_1$} (3,0);

            \begin{scope}[shift={(5,0)}]
                \node (A1) at (0,0) {1};
                \node (A2) at (1,-1) {2};
                \node (A3) at (-1,1) {3};
                \node (A4) at (-1,-1) {4};
                \node (A5) at (1,1) {5};
                \node (A6) at (0,2) {6};
                
                \draw[->] (A4) -- (A1);
                \draw[->] (A5) -- (A1);
                \draw[->] (A1) -- (A2);
                \draw[->] (A1) -- (A3);
                \draw[->, bend left=10] (A4) to (A3);
                \draw[->, bend left=10] (A3) to (A4);
                \draw[->, bend left=10] (A4) to (A2);
                \draw[->, bend left=10] (A2) to (A4);
                \draw[->, bend left=10] (A5) to (A3);
                \draw[->, bend left=10] (A3) to (A5);
                \draw[->, bend left=10] (A5) to (A2);
                \draw[->, bend left=10] (A2) to (A5);
                \draw[<-, bend left=10] (A1) to (A6);
                \draw[<-, bend left=10] (A6) to (A1);

                \draw[->, bend right=60] (A6) to (A4);
                \draw[->, bend right=60] (A2) to (A6);
                \draw[->] (A6) to (A5);
                \draw[->] (A3) to (A6);
            \end{scope}

            \draw[->] (7,0) -- node[above]{\small{reduction}} (8,0);

            \begin{scope}[shift={(10,0)}]
                \node (A1) at (0,0) {1};
                \node (A2) at (1,-1) {2};
                \node (A3) at (-1,1) {3};
                \node (A4) at (-1,-1) {4};
                \node (A5) at (1,1) {5};
                \node (A6) at (0,2) {6};
                
                \draw[->] (A4) -- (A1);
                \draw[->] (A5) -- (A1);
                \draw[->] (A1) -- (A2);
                \draw[->] (A1) -- (A3);
                \draw[<-, bend left=10] (A1) to (A6);
                \draw[<-, bend left=10] (A6) to (A1);
                \draw[->, bend right=60] (A6) to (A4);
                \draw[->, bend right=60] (A2) to (A6);
                \draw[->] (A6) to (A5);
                \draw[->] (A3) to (A6);
            \end{scope}
        \end{tikzpicture}
    \]
    Clearly $Q^\dag = Q(T')$. The four $3$-cycles in $Q^\dag$ belong to $H^\dag$. They are precisely the generators for $H(T')$.

    \emph{Case 2.10.} $(P_4, P_4)$. The two puzzle pieces glue as a sphere with three punctures. In this case all punctures must be $\coii$-colored. The quiver $Q(T)$ is necessarily as follows.
    \[
        \begin{tikzpicture}
            \begin{scope}[shift={(0,0)}]
                \node[] (A1) at (-1,0) {1};
                \node[] (A2) at (0,0) {2};
                \node[] (A3) at (1,0) {3};

                \draw[->, bend left] (A1) to node[above]{$a_1$} (A2);
                \draw[->, bend left] (A2) to node[below]{$a_2$} (A1);
                \draw[->, bend left] (A2) to node[above]{$b_1$} (A3);
                \draw[->, bend left] (A3) to node[below]{$b_2$} (A2);
            \end{scope}

            \draw[->] (2,0) to node[above]{$\widetilde\mu_2 = \mu_2$} (3,0);

            \begin{scope}[shift={(5,0)}]
                \node[] (A1) at (-1,0) {1};
                \node[] (A2) at (0,0) {2};
                \node[] (A3) at (1,0) {3};

                \draw[<-, bend left=15] (A1) to (A2);
                \draw[<-, bend left=15] (A2) to (A1);
                \draw[<-, bend left=15] (A2) to (A3);
                \draw[<-, bend left=15] (A3) to (A2);
                \draw[->, bend left] (A1) to node[above]{$[b_1a_1]$} (A3);
                \draw[->, bend left] (A3) to node[below]{$[a_2b_2]$} (A1);
            \end{scope}
        \end{tikzpicture}
    \]
    The homotopy $H(T)$ is trivial. So the $2$-cycle $[b_1a_1][a_2b_2]$ is not deleted and now $H^\dag$ is generated by the two $3$-cycles, as desired.
\end{proof}

\subsection{Example: once-punctured torus}\label{subsec: torus one mark}

Let $(\S, \M)$ be a torus with one puncture $p$. There is a triangulation $T$ (\Cref{fig: torus one mark}) with two triangles whose associated $Q(T)$ is a Markov quiver. It is actually the only triangulation up to diffeomorphisms of $\S$ fixing $p$. In a tagged triangulation, the ends of all arcs must be tagged in the same way and the way they are tagged is invariant under flips.

\begin{figure}
    \centering
    \begin{tikzpicture}[scale=2]
        \begin{scope}[shift={(0,0)}]
            \node[generic] (A) at (0,0) {};
            \node[generic] (B) at (1,0) {};
            \node[generic] (C) at (1,1) {};
            \node[generic] (D) at (0,1) {};

            \draw[thick] (0,0) -- node[below]{$1$} (1,0) -- node[right]{$3$} (1,1) -- node[above]{$1$} (0,1) -- node[left]{$3$} (0,0);
            \draw[thick] (0,1) -- node[above]{$2$}(1,0);
        \end{scope}

        \draw[<->] (2,0.5) -- node[above]{flip} (2.5,0.5);

        \begin{scope}[shift={(3.5,0)}]
            \node[generic] (A) at (0,0) {};
            \node[generic] (B) at (1,0) {};
            \node[generic] (C) at (1,1) {};
            \node[generic] (D) at (0,1) {};
            
            \draw[thick] (0,0) -- node[below]{$1$} (1,0) -- node[right]{$3$} (1,1) -- node[above]{$1$} (0,1) -- node[left]{$3$} (0,0);
            \draw[thick] (0,0) -- node[above]{$2$}(1,1);
        \end{scope}
    \end{tikzpicture}
    \caption{Once-punctured torus: triangulations $T$ and $T'$ related by a flip.}
    \label{fig: torus one mark}
\end{figure}
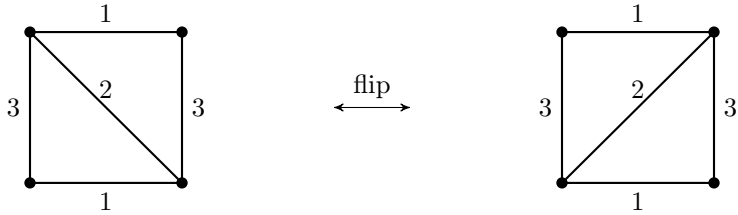

 \[
    \begin{tikzpicture}[->, >=stealth', scale = 0.75]
        \begin{scope}[shift={(0,0)}]
        \node[] (1) at (-2, 0) {1};
        \node[] (2) at (0, {sqrt(12)}) {2};
        \node[] (3) at (2, 0) {3};

        \path[]
        (1) edge[bend right = 20] node [fill = white, pos = 0.5] {$\alpha_2$} (2)
        (1) edge[bend left = 20] node [fill = white, pos = 0.5] {$\alpha_1$} (2)
        (2) edge[bend right = 20] node [fill = white, pos = 0.5] {$\beta_1$} (3)
        (2) edge[bend left = 20] node [fill = white, pos = 0.5] {$\beta_2$} (3)
        (3) edge[bend left = 20] node [fill = white, pos = 0.5] {$\gamma_1$} (1)
        (3) edge[bend right = 20] node [fill = white, pos = 0.5] {$\gamma_2$} (1);
        \end{scope}

        \draw[->] (4, 1) to node[above]{$\mu_2$} (5, 1);

        \begin{scope}[shift={(9,0)}]
        \node[] (1) at (-2, 0) {1};
        \node[] (2) at (0, {sqrt(12)}) {2};
        \node[] (3) at (2, 0) {3};

        \path[]
        (2) edge[bend right = 20] node [fill = white, pos = 0.5] {$\alpha_2^\star$} (1)
        (2) edge[bend left = 20] node [fill = white, pos = 0.5] {$\alpha_1^\star$} (1)
        (3) edge[bend right = 20] node [fill = white, pos = 0.5] {$\beta_1^\star$} (2)
        (3) edge[bend left = 20] node [fill = white, pos = 0.5] {$\beta_2^\star$} (2)
        (1) edge[bend left = 20] node [fill = white, pos = 0.5] {$[\beta_2\alpha_1]$} (3)
        (1) edge[bend right = 20] node [fill = white, pos = 0.5] {$[\beta_1\alpha_2]$} (3);
        \end{scope}
    \end{tikzpicture}
\]
The homotopy $H(T)$ depends on $c(p)$. If $c(p) = \coi$, then $H(T)$ is generated by 
\[
    \{\gamma_1\beta_1\alpha_1, \gamma_2\beta_2\alpha_2, \gamma_1\beta_2\alpha_1\gamma_2\beta_1\alpha_2\}.
\]
In $Q(T)^\dag$, the $2$-cycles $\gamma_1[\beta_1\alpha_1]$ and $\gamma_2[\beta_2\alpha_2]$ are deleted. Then $H(T)^\dag$ is generated by
\[
    \{\alpha_1^\star\beta_2^\star[\beta_2\alpha_1], \alpha_2^\star\beta_1^\star[\beta_1\alpha_2], \alpha_1^\star\beta_1^\star[\beta_2\alpha_1]\alpha_2^\star\beta_2^\star[\beta_1\alpha_2]\}.
\]
If $c(p)=\coii$, then $H(T)$ is generated by only the $3$-cycles $\{\gamma_1\beta_1\alpha_1, \gamma_2\beta_2\alpha_2\}$. The quiver $Q(T)^\dag$ is the same as in the previous case while $H(T)^\dag$ is generated by $\{\alpha_1^\star\beta_2^\star[\beta_2\alpha_1], \alpha_2^\star\beta_1^\star[\beta_1\alpha_2]\}$. In either case, we have $\mu_2(Q(T), H(T)) = (Q(T'), H(T'))$ where $T'$ is the flip of $T$ at $2$.

\section{2-cycle-allowed cluster algebras}\label{section: 2-cycle cluster algebras}
In this section, we define 2-cycle-allowed cluster algebras which extend the classical cluster algebras defined by Fomin and Zelevinsky \cite{FZI, FZIV}.

Let $(\mathbb P, \oplus, \cdot)$ be a semifield, $\mathbb{ZP}$ the group ring, $\mathbb{QP}$ the fraction field of $\mathbb {ZP}$, and $\mathcal F$ the field of rational functions in $n$ variables with coefficients in $\mathbb{QP}$. 
A  {\em  seed}  is a triple $\Sigma = (\mathbf{x}, \mathbf y, (Q, H))$, where
\begin{enumerate}
	\item[$\bullet$] $\mathbf{x} =  (x_1, \dots, x_n)$  is a free generating set of $\mathcal F$, and we call $\mathbf{x}$ a {\em cluster} and $x_1,\dots,x_n$ {\em cluster variables};
	\item[$\bullet$] $\mathbf y=(y_1,y_2,\dots,y_n)$ is an $n$-tuple of elements of $\mathbb P$, and we call $\mathbf y$ the {\em coefficient tuple}; 
	\item[$\bullet$] $(Q, H)$ is a quiver $Q$ with homotopy $H$  and $Q_0=\{1,2,\dots,n\}$. Note that $Q$ may have $2$-cycles.
\end{enumerate}

 For each quiver $Q$, the {\em adjacency matrix} $P=P(Q)$ of $Q$ is the matrix $(p_{ij})_{n\times n}$, where $p_{ij} = |i\rightarrow j|$ is the number of arrows from $i$ to $j$ in $Q$. For $k\in \{1,2,\dots, n\}$,  we define another seed $\Sigma'=(\mathbf x', \mathbf y', (Q', H'))$ which is called the {\em mutation of $\Sigma = (\mathbf x, \mathbf y, (Q, H))$
at $k$} and obtained from $\Sigma$ by the following rules:
\begin{enumerate}
    \item[$\bullet$] $\mathbf{x'} = (x'_1, x'_2,\dots, x'_n)$ is given by
                         \begin{equation} \label{mutation formula of clusters} x'_i =\begin{cases} x_i, &\text{$i\neq k$},\\  
	\dfrac{ y_k  \prod\limits_{j=1}^nx_j^{p_{jk}} +    \prod\limits_{j=1}^nx_j^{p_{kj}}         }{(y_k\oplus 1)x_k} ,&\text{$i=k$;}
\end{cases}  \end{equation}\label{Y-mutation fumula}
    \item[$\bullet$] The coefficient tuple $\mathbf y' = (y'_1, y'_2, \dots, y'_n)$ is given by 
     \begin{equation}                 
        y'_j=\begin{cases} y_k^{-1}, &\text{$j= k$},\\  
  	y_j y_k^{p_{kj}}(y_k\oplus 1)^{p_{jk}-p_{kj}}        ,&\text{$j\neq k$.}
  \end{cases}
  \end{equation}
	\item[$\bullet$]   $(Q', H') = \mu_k(Q, H)$ is the mutation of $(Q, H)$ at $k$ following from Definition \ref{def: mutation}.  
\end{enumerate}

Denote $\Sigma' = \mu_k(\Sigma)$.  We also call $(Y,(Q,H))$ a \emph{$Y$-seed}, and the formula (\ref{Y-mutation fumula}) is called the \emph{$Y$-mutation formula}.

Let $\mathbb T_n$ be the $n$-regular tree such that edges are labeled by the numbers $1,\dots, n,$ and the $n$ edges emanating
from each vertex receive different labels.
\begin{definition}
     A \emph{cluster pattern} is an assignment of a labeled seed $(\mathbf x_t, \mathbf y_t, (Q_t, H_t))$ to every vertex $t\in \mathbb T_n$ such that the seeds assigned to the endpoints of any edge $t\frac{k}{\quad}t'$ are obtained from each other by the mutation in direction $k$.  
\end{definition}

\begin{definition}
   The {\em $2$-cycle-allowed cluster algebra} $\mathcal A=\mathcal A(\Sigma)$ is the $\mathbb{ZP}$-subalgebra of $\mathcal F$ generated by all cluster variables obtained from $\Sigma$ by applying a finite sequence of mutations. 
\end{definition}

When $Q$ admits no $2$-cycles and $H=\pi(Q)$, the seeds and mutations coincide with those defined by Fomin and Zelevinsky in \cite{FZI, FZIV}, and the $2$-cycle-allowed cluster algebras are the classical cluster algebras in the sense of Fomin and Zelevinsky.

We can similarly define the $Y$-pattern on an $n$-regular tree $\mathbb T_n$ as in \cite{FZIV}.
\begin{definition}
    A $Y$-pattern on $\mathbb T_n$ is an assignment of $Y$-seeds
    \[
        t\in \mathbb T_n \mapsto (\mathbf y_t, (Q_t, H_t))
    \]
    such that $t\mapsto (Q_t, H_t)$ is a pattern of quivers with homotopies and if $t'\frac{k}{\quad\quad} t$, then $\mathbf y_{t'}$ and $\mathbf y_t$ are related by the $Y$-mutation formula (\ref{Y-mutation fumula}).
\end{definition}

The following result shows that one can obtain a $Y$-pattern from a cluster pattern.

\begin{lemma}[\cf \cite{FZIV}, Proposition 3.9]\label{lemma: y hat}
	Let $(\mathbf{x}_t, \mathbf y_t, (Q_t, H_t))_{t\in \mathbb T_n}$ be a cluster pattern. For each $t\in \mathbb T_n$ and $j\in [1,n]$, define 
	\[    \widehat y_{j,t}  =  y_{j,t}\prod\limits_{i=1}^n  x_{i,t}^{p_{ij,t} - p_{ji,t}}.\]
	Then $(\widehat y_t, (Q_t, H_t))_{t\in \mathbb T_n}$ is a $Y$-pattern.
\end{lemma}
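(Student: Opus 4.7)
The plan is to follow the proof of \cite[Proposition 3.9]{FZIV} essentially verbatim; the only new ingredient required is the observation that, under mutation of a quiver with homotopy, the \emph{skew-symmetric part} of the adjacency matrix transforms by the classical Fomin--Zelevinsky formula. Concretely, for $i, j \in Q_0 \setminus \{k\}$ I would first verify
\[
    p_{ij,t'} - p_{ji,t'} = (p_{ij,t} - p_{ji,t}) + p_{ik,t}\,p_{kj,t} - p_{jk,t}\,p_{ki,t},
\]
together with $p_{ik,t'} = p_{ki,t}$ for the $k$-th row and column. The pre-mutation $\widetilde\mu_k$ adds $p_{ik,t}\,p_{kj,t}$ arrows $j \to i$ and $p_{jk,t}\,p_{ki,t}$ arrows $i \to j$; although the subsequent removal of $2$-cycles in \Cref{def: delete 2-cycles} depends on $H$, it subtracts \emph{equal} amounts from $p_{ij}$ and $p_{ji}$, so these contributions cancel in the difference.

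With this formula in hand, the verification splits into two cases. For $j = k$: combining $y_{k,t'} = y_{k,t}^{-1}$, $x_{i,t'} = x_{i,t}$ for $i \neq k$, and the fact that $p_{kk,t}=0$ (no loops), a direct substitution into the definition of $\widehat y_{k,t'}$ yields $\widehat y_{k,t'} = \widehat y_{k,t}^{-1}$. For $j \neq k$, the strategy is to rewrite the exchange relation \eqref{mutation formula of clusters} in the form
\[
    x_{k,t'} \;=\; \frac{(\widehat y_{k,t} + 1)\,\prod_i x_{i,t}^{p_{ki,t}}}{(y_{k,t} \oplus 1)\, x_{k,t}},
\]
which is a consequence of the identity $\widehat y_{k,t} + 1 = \bigl(y_{k,t}\prod_i x_{i,t}^{p_{ik,t}} + \prod_i x_{i,t}^{p_{ki,t}}\bigr)\big/\prod_i x_{i,t}^{p_{ki,t}}$. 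Substituting this expression, together with the coefficient mutation formula for $y_{j,t'}$ and the transformation of $p_{ij}-p_{ji}$ recorded above, into $\widehat y_{j,t'} = y_{j,t'}\prod_i x_{i,t'}^{p_{ij,t'}-p_{ji,t'}}$, the factors of $(y_{k,t}\oplus 1)$ cancel, the extraneous powers of $x_{k,t}$ cancel, and the remaining $x_{i,t}$-exponents regroup cleanly into $\widehat y_{j,t}\,\widehat y_{k,t}^{p_{kj,t}}\,(\widehat y_{k,t}+1)^{p_{jk,t}-p_{kj,t}}$. This is precisely the $Y$-mutation formula.

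I do not foresee a genuine obstacle. The subtle point is simply that the homotopy-dependent deletion of $2$-cycles is invisible to the $\widehat y$-variables, because it affects only the symmetric part of the adjacency matrix; once this is observed, the computation is essentially identical to the $2$-acyclic situation of \cite{FZIV}, carried out in the universal semifield of subtraction-free rational functions in $\mathbf x_t \cup \mathbf y_t$ (where $\widehat y_{k,t}+1$ has its obvious meaning).
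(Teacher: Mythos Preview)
Your proposal is correct and matches the paper's approach: the paper does not spell out a proof but simply records the lemma with a \emph{cf.} reference to \cite[Proposition~3.9]{FZIV}, implicitly relying on exactly the computation you outline. Your one additional observation---that the homotopy-dependent deletion of $2$-cycles affects only the symmetric part of the adjacency matrix, so the differences $p_{ij}-p_{ji}$ transform by the classical Fomin--Zelevinsky rule---is precisely the point needed to make the \cite{FZIV} argument go through in this setting, and it is sound.
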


Let $\mathrm{Trop}(u_j, j=1,2,\dots,m)$ be the abelian group freely generated by $u_j (j=1,2,\dots,m)$. The addition $\oplus$ in $\mathrm{Trop}(u_j,j =1,2,\dots,m)$ given by \[\prod u_j^{a_j} \oplus \prod u_j^{b_j} := \prod u_j^{\min\{a_j,b_j\}}\]   
endows a semifield structure on $\mathrm{Trop}(u_j, j=1,2,\dots,m)$, and we call it a \emph{tropical semifield}. 
We say the corresponding $2$-cycle cluster algebra has \emph{principal coefficients at $t_0$} and we denote the corresponding cluster variable by $X_{i,t}$ if $\mathbb P = \mathrm{Trop}(y_1,\dots,y_n)$ is the tropical semifield generated by $n$ variables and $\mathbf y_{t_0}=(y_1,\dots,y_n)$. In this case, we define $F_{i,t}\in \mathbb Q_{sf}(y_1,\dots,y_n)$ to be the subtraction-free function obtained from $X_{i,t}$ by specializing the initial cluster variables $x_1,\dots,x_n$ to $1$.

The following result gives the so-called \emph{separation formula}.

\begin{proposition}[\cf \cite{FZIV}, Theorem 3.7]\label{separation} 
	Let $(\mathbf{x}_t,\mathbf{y}_t, (Q_t,H_t))_{t\in \mathbb T_n}$ be a cluster pattern with coefficients from an arbitrary semifield $\mathbb P$.  Then 
	\[       x_{i,t}  =  \frac{X_{i,t}|_{\mathcal F}(x_1,\dots,x_n;y_1,\dots,y_n)}{F_{i,t}|_{\mathbb P}(y_1,\dots,y_n)}.                     \]
\end{proposition}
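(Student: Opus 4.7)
The plan is to prove the identity by induction on the distance $d(t, t_0)$ in $\mathbb T_n$, following the strategy of Fomin--Zelevinsky \cite[Theorem 3.7]{FZIV}. The base case $t = t_0$ is immediate: in the principal-coefficient setting one has $X_{i,t_0} = x_i$ (so the specialization $F_{i,t_0}$ is the monomial $1$), which matches $x_{i,t_0} = x_i$.

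For the inductive step, suppose $t$ and $t'$ are joined by the edge labeled $k$ with $d(t', t_0) = d(t, t_0) + 1$, and assume the formula holds at $t$. For $j \neq k$ the cluster variables, $X$'s and $F$'s are unchanged on both sides, so the only nontrivial case is $j = k$. Applying the exchange relation (\ref{mutation formula of clusters}) at $t$ in the general semifield $\mathbb P$ expresses $x_{k,t'}$ as a rational function in the $x_{i,t}$ and $y_{k,t}$. Substituting $x_{i,t} = X_{i,t}(\mathbf x;\mathbf y)/F_{i,t}(\mathbf y)$ from the inductive hypothesis, and applying the same exchange relation to the principal pattern to compute $X_{k,t'}$ in terms of the $X_{i,t}$ and the principal $y$-variable $Y_{k,t}$, reduces the separation formula at $t'$ to an algebraic identity comparing the two sides. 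The remaining step is to rewrite $y_{k,t}$ and $y_{k,t}\oplus 1$ (in $\mathbb P$) in terms of $Y_{k,t}|_{\mathbb P}$ and the $F$-polynomials. This is exactly where \Cref{lemma: y hat} enters: applying the $\widehat y$-pattern identity to both the general and the principal cluster patterns, and combining with the $Y$-mutation formula (\ref{Y-mutation fumula}), one obtains the required expression for $y_{k,t}$ in the general pattern in terms of principal data and the $F_{i,t}|_{\mathbb P}(\mathbf y)$, completing the induction.

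The computation does not use $2$-acyclicity anywhere. The classical argument in \cite{FZIV} is phrased via $[b_{jk}]_+$ and $-b_{jk}$; in our setting these quantities are uniformly replaced by $p_{jk}$ and $p_{jk}-p_{kj}$, and the $Y$-mutation formula (\ref{Y-mutation fumula}) is calibrated so that the algebraic manipulations transcribe verbatim. The main conceivable obstacle is that in the presence of $2$-cycles the two monomials $y_k\prod_j x_j^{p_{jk}}$ and $\prod_j x_j^{p_{kj}}$ appearing in the exchange relation can share a common factor, which would be an issue for the Laurent phenomenon but is irrelevant here: the separation formula is merely an identity of rational functions in $\mathcal F$ and makes no use of coprimality. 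Hence the inductive step closes and the proposition follows.
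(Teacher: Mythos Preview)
Your proposal is correct and is in fact more detailed than what the paper itself provides. The paper does not give a proof of this proposition at all: it simply labels the statement with ``cf.~\cite{FZIV}, Theorem~3.7'' and moves on, implicitly asserting that the Fomin--Zelevinsky argument carries over unchanged. Your sketch spells out precisely that argument---induction on distance in $\mathbb T_n$, using the $\widehat y$-pattern from \Cref{lemma: y hat} (which the paper likewise states without proof) to handle the coefficient bookkeeping---and correctly observes that the only potential obstruction from $2$-cycles (common monomial factors in the two exchange terms) is irrelevant because the separation formula is a bare identity of rational functions, not a divisibility statement. So your approach coincides with what the paper intends, just made explicit.
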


A fundamental result in the theory of classical cluster algebras is the \emph{Laurent phenomenon}, which asserts that any cluster variable is a Laurent polynomial in the variables of any given cluster. It remains unclear to us whether this property extends to cluster algebras that allow $2$-cycles. The standard proofs of the Laurent phenomenon (for example those in the classical settings \cite{FZI} and in the Laurent phenomenon algebras introduced by Lam and Pylyavskyy \cite{LamPyly}) relies on the assumption that no cluster variable divides its corresponding exchange polynomial, which fails in the presence of 2-cycles.

However, when a quiver with homotopy $(Q, H)$ admits a global weakly admissible covering $p_H \colon Q_H \rightarrow Q$ (see \eqref{eq: galois correspond cover}), the Laurent phenomenon follows from that of the ordinary cluster algebra associated to $Q_H$. This includes, for example, the case where $H$ contains all squares of cyclic walks by \Cref{prop: global admissible pi square}.

\begin{theorem}\label{thm: laurent}
Let $\mathcal A = \mathcal A(\mathbf x, \mathbf y, (Q, H))$ be a $2$-cycle cluster algebra with coefficients from an arbitrary semifield such that $(Q, H)$ admits a globally weakly admissible covering. Then for any $i,t$,
\[   x_{i,t}\in \mathbb{Z}_{\geq 0}\mathbb{P}[x^{\pm 1}_1,x^{\pm 1}_2,\dots,x^{\pm 1}_n], \;\; \text{and}\;\; \mathcal A\subseteq \mathbb{ZP}[x^{\pm 1}_1,x^{\pm 1}_2,\dots,x^{\pm 1}_n].    \] 
Moreover, if $\mathcal A$ has principal coefficients at $t_0$,  then 
\[   X_{i,t}\in \mathbb{Z}_{\geq 0}[x^{\pm 1}_1,x^{\pm 1}_2,\dots,x^{\pm 1}_n; y_1,\dots,y_n], \;\; \text{and}\;\;   F_{i,t} \in \mathbb{Z}_{\geq 0}[y_1,\dots,y_n].    \] 
\end{theorem}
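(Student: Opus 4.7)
The plan is to lift the cluster algebra $\mathcal{A}(Q,H)$ to a classical Fomin--Zelevinsky cluster algebra on the globally weakly admissible covering $p_H \colon Q_H \rightarrow Q$ of \eqref{eq: galois correspond cover}, and then push the Laurent phenomenon back down via a coefficient specialization. Since $p_H$ is globally weakly admissible, $Q_H$ remains $2$-acyclic after any sequence of orbit mutations, so every quiver appearing in the lifted pattern is $2$-acyclic and the classical Fomin--Zelevinsky machinery (including positivity of Laurent expansions, as established by Lee--Schiffler and by Gross--Hacking--Keel--Kontsevich) applies. If $Q_H$ is infinite, a cluster variable reached in finitely many mutations involves only a finite connected subquiver of $Q_H$ containing the mutated vertices and their neighbors, and the classical theorems apply without modification.

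Concretely, on $Q_H$ I would fix the initial cluster $\widetilde{\mathbf{x}} = (\widetilde{x}_{\tilde{i}})_{\tilde{i} \in (Q_H)_0}$ of algebraically independent indeterminates and initial coefficients $\widetilde{y}_{\tilde{i}} \coloneqq y_{p_H(\tilde{i})} \in \mathbb{P}$. Let $\phi$ be the $\mathbb{ZP}$-algebra homomorphism of the relevant Laurent rings determined by $\widetilde{x}_{\tilde{i}} \mapsto x_{p_H(\tilde{i})}$. The key compatibility statement is
\[
    \phi\bigl(\widetilde{x}_{\tilde{i}, \tilde{t}}\bigr) \;=\; x_{p_H(\tilde{i}),\, t}
\]
for every $t = \mu_{k_r} \cdots \mu_{k_1}(t_0)$ in $\mathbb{T}_n$ and any lift $\tilde{t}$ obtained from a chosen lift $\tilde{t}_0$ by the corresponding orbit mutations at $[k_1], \dots, [k_r]$. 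I would prove this by induction on $r$. The inductive step decomposes the orbit mutation at $[k]$ into a commuting sequence of FZ-mutations at the vertices of $p_H^{-1}(k)$, which is legitimate because weak admissibility forces these vertices to be pairwise non-adjacent. For each $\tilde{k} \in p_H^{-1}(k)$, the local-isomorphism property of $p_H$ gives a bijection between arrows at $\tilde{k}$ and arrows at $k$, so specialization of the lifted exchange monomials yields the exchange monomials on $(Q,H)$ and therefore $\phi(\widetilde{x}'_{\tilde{k}}) = x'_k$. For a coefficient at $\tilde{j} \notin p_H^{-1}(k)$, summing the individual $y$-mutation contributions from each $\tilde{k} \in p_H^{-1}(k)$ and using $\sum_{\tilde{k} \in p_H^{-1}(k)} \widetilde{p}_{\tilde{k}\tilde{j}} = p_{k\, p_H(\tilde{j})}$ (and its transpose) — again a consequence of the local isomorphism — one verifies that the assignment $\widetilde{y}_{\tilde{i}} = y_{p_H(\tilde{i})}$ is preserved by orbit mutation, reproducing exactly the $y$-mutation formula at $k$ on $(Q,H)$. \Cref{prop: orbit mutation compatible} identifies the underlying quivers with homotopies on the two sides, closing the induction.

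With compatibility established, the theorem is immediate. For each $x_{i,t} \in \mathcal{A}(Q,H)$ there is a lifted cluster variable $\widetilde{x}_{\tilde{i},\tilde{t}}$ in the classical cluster algebra on $Q_H$, which lies in $\mathbb{Z}_{\geq 0}\mathbb{P}[\widetilde{x}_{\tilde{j}}^{\pm 1}]$ by the Fomin--Zelevinsky Laurent phenomenon with positivity; applying the $\mathbb{Z}_{\geq 0}\mathbb{P}$-algebra map $\phi$ gives $x_{i,t} \in \mathbb{Z}_{\geq 0}\mathbb{P}[x_1^{\pm 1}, \dots, x_n^{\pm 1}]$, and hence $\mathcal{A}(Q,H) \subseteq \mathbb{ZP}[x_1^{\pm 1}, \dots, x_n^{\pm 1}]$. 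The principal-coefficient statement is proved by running the same argument over $\widetilde{\mathbb{P}} = \mathrm{Trop}(\widetilde{y}_{\tilde{i}})$ so that the $\widetilde{y}_{\tilde{i}}$'s are independent tropical generators on the cover, applying the classical positivity $\widetilde{X}_{\tilde{i},\tilde{t}} \in \mathbb{Z}_{\geq 0}[\widetilde{x}_{\tilde{j}}^{\pm 1}, \widetilde{y}_{\tilde{j}}]$ and $\widetilde{F}_{\tilde{i},\tilde{t}} \in \mathbb{Z}_{\geq 0}[\widetilde{y}_{\tilde{j}}]$, and only at the end specializing $\widetilde{y}_{\tilde{i}} \mapsto y_{p_H(\tilde{i})}$ to land in $\mathbb{Z}_{\geq 0}[x_j^{\pm 1}, y_1, \dots, y_n]$ and $\mathbb{Z}_{\geq 0}[y_1, \dots, y_n]$ respectively.

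The main obstacle is the coefficient bookkeeping in the inductive step when $p_H^{-1}(k)$ contains several vertices: one must sum the contributions from the individual $y$-mutations and use the covering's local-isomorphism property to convert those sums over $p_H^{-1}(k)$ into arrow counts in $Q$, matching the single $y$-mutation at $k$ on $(Q,H)$. This is elementary but depends decisively on weak admissibility, which prevents arrows within $p_H^{-1}(k)$ from interacting and thereby makes the orbit mutation factor as commuting individual FZ-mutations.
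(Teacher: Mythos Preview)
Your proposal is correct and follows essentially the same approach as the paper: lift to the classical cluster algebra on the globally weakly admissible cover, invoke the ordinary Laurent phenomenon with positivity there, and push down by the specialization $\widetilde{x}_{\tilde{i}}\mapsto x_{p_H(\tilde{i})}$, $\widetilde{y}_{\tilde{i}}\mapsto y_{p_H(\tilde{i})}$; the paper phrases this as ``adapting Lemma~4.4.8 and Corollary~4.4.11 of \cite{FWZ-II}'' together with \Cref{cor: global pattern}. The only organizational difference is that the paper first uses the separation formula (\Cref{separation}) to reduce the arbitrary-semifield statement to the principal-coefficient one, whereas you verify the coefficient compatibility directly in both regimes; either route works.
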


\begin{proof}
 Thanks to \Cref{separation}, we only need to prove the case for the principal coefficients. This can be obtained via \Cref{cor: global pattern}, as well as slightly adapting Lemma 4.4.8 and Corollary 4.4.11 in \cite{FWZ-II}.
\end{proof}

The following proposition allows us to define $g$-vectors. 
\begin{proposition}[\cf \cite{FZIV}, Proposition 6.1]
	Let $\mathcal A =  \mathcal A(\mathbf x,\mathbf y,(Q, H))$ be the cluster algebra with principal coefficients. 
	Then each cluster variable $X_{i,t}$ is homogeneous with respect to the $\mathbb Z^n$-grading in $\mathbb{Q}[x_1,\dots,x_n; y_1,\dots,y_n]$ given by 
	\begin{equation}
		\deg(x_i) = \mathbf{e}_i,\quad\text{and}\quad  \deg(y_j) = \begin{pmatrix}
			p_{j1}-p_{1j} \\
			\vdots \\
			p_{jn}-p_{nj}
		\end{pmatrix}
	\end{equation}
    ($\mathbf e_i$ being the $i$-th unit vector in $\mathbb Z^n$) in the sense that $X_{i,t}$ can be expressed as $U/V$ where $U$ and $V$ are homogeneous elements in $\mathbb{Q}[x_1,\dots,x_n; y_1,\dots,y_n]$.
\end{proposition}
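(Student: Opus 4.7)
The plan is to adapt the classical Fomin--Zelevinsky argument by inducting on the mutation distance $d(t, t_0)$ in $\mathbb{T}_n$, proving simultaneously that (i) each cluster variable $X_{i,t}$ is homogeneous in the sense of the statement, and (ii) each $\widehat{y}_{j,t}$ from \Cref{lemma: y hat}, viewed as a rational function in $\mathcal{F}$, is homogeneous of degree $0$.

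For the base case $t = t_0$, statement (i) is immediate since $X_{i,t_0} = x_i$ has degree $\mathbf{e}_i$. For (ii), a direct computation from $\widehat{y}_{j,t_0} = y_j \prod_i x_i^{p_{ij,t_0} - p_{ji,t_0}}$, using the stated formula for $\deg(y_j)$ together with the trivial identity $p_{ij,t_0} - p_{ji,t_0} = -(p_{ji,t_0} - p_{ij,t_0})$ (which holds regardless of the presence of $2$-cycles), shows that the two contributions cancel, giving $\deg(\widehat{y}_{j,t_0}) = 0$.

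For the inductive step along an edge $t\frac{k}{\quad}t'$, I would rewrite the exchange relation in terms of $\widehat{y}_{k,t}$. Substituting $y_{k,t} = \widehat{y}_{k,t} \prod_j X_{j,t}^{p_{kj,t} - p_{jk,t}}$ and combining exponents via $(p_{kj,t} - p_{jk,t}) + p_{jk,t} = p_{kj,t}$ transforms the defining exchange relation into
\[
X_{k,t'} X_{k,t} = (1 + \widehat{y}_{k,t}) \prod_j X_{j,t}^{p_{kj,t}}.
\]
By inductive hypothesis (ii), $\widehat{y}_{k,t}$ has degree $0$, hence so does $1 + \widehat{y}_{k,t}$ (writing $\widehat{y}_{k,t} = A/B$ with $\deg A = \deg B$ yields $1 + \widehat{y}_{k,t} = (A + B)/B$); combined with (i) at $t$, this establishes (i) for $X_{k,t'}$. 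To propagate (ii) to $t'$, invoke the $Y$-mutation formula of \Cref{lemma: y hat}, under which each $\widehat{y}_{j,t'}$ is a Laurent monomial in the $\widehat{y}_{i,t}$'s multiplied by a power of $1 + \widehat{y}_{k,t}$, and all such factors are degree-$0$ rational functions.

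The main point to verify---and what one might initially fear to be an obstacle---is that none of the identities above require $Q_t$ to be $2$-acyclic. In fact, the exponents in the exchange relation are the raw arrow counts $p_{ij,t}$ of each mutated quiver (with $2$-cycles permitted), and the only algebraic identity used, namely $(p_{kj,t} - p_{jk,t}) + p_{jk,t} = p_{kj,t}$, is formally trivial. The deletion of $2$-cycles prescribed by the homotopy $H$ in the mutation $\mu_k$ is thus invisible to this degree bookkeeping, and the argument proceeds exactly as in the classical $2$-acyclic setting.
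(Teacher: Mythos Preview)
Your proposal is correct and follows the same approach as the paper, which simply states that the induction proof from \cite{FZIV} applies \emph{mutatis mutandis}. The only imprecision is that your displayed exchange relation should carry an extra factor $(y_{k,t}\oplus 1)$ on the left-hand side; since in the principal-coefficient setting this is a Laurent monomial in $y_1,\dots,y_n$ and hence homogeneous, the argument is unaffected.
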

\begin{proof}
    The induction proof in \cite{FZIV} applies here \emph{mutatis mutandis}.
\end{proof}

We define the degree of the cluster variable $X_{i,t}$ by $\mathbf g_{i,t}\coloneqq \deg U - \deg V$ and call it the {\em $g$-vector}. The $g$-vectors of cluster variables in the same cluster satisfy a remarkable \emph{sign coherence} \cite[Conjecture 6.13]{FZIV} in the ordinary case (proven in \cite{DWZ10, GHKK}). We are not able to determine whether this property extends to the current more general case with $2$-cycles.

\section{Final Remarks}\label{section: final remarks}

In this article, we have developed a mutation theory for loop-free quivers with oriented $2$-cycles, based on the notion of a \emph{homotopy}, i.e., a normal subgroupoid of the quiver's fundamental groupoid.  
This framework:
\begin{enumerate}
    \item generalizes the Fomin--Zelevinsky mutation of $2$-acyclic quivers (\Cref{def: mutation});
    \item recovers orbit mutations arising from weakly admissible coverings, and extends them to cases where further orbit mutations are obstructed (\Cref{prop: orbit mutation compatible});
    \item yields involutive mutations preserving the quotient groupoid (\Cref{prop: mutation involutive}, \Cref{remark: quotient groupoid invariant}).
\end{enumerate}

We constructed quivers with homotopies from triangulations of marked bordered surfaces with colored punctures and proved a flip--mutation correspondence (\Cref{thm: mutation and flip}) that extends the Fomin--Shapiro--Thurston model \cite{FST08} to the $2$-cycle setting and parallels Labardini-Fragoso’s theorem \cite[Theorem 30]{LF09} on quivers with potentials.

Based on the involutive property of mutations of quivers with homotopies, we also defined $2$-cycle-allowed cluster algebras $\mathcal{A}(Q,H)$ and established the Laurent phenomenon for those arising from weakly admissible coverings admitting infinite orbit mutations (\Cref{thm: laurent}).

The following problems remain open.

\begin{problem}\label{prob: laurent}
    Does the Laurent phenomenon hold for all $2$-cycle-allowed cluster algebras $\mathcal A(Q,H)$? \Cref{thm: laurent} settles the case arising from weakly admissible coverings with infinite orbit mutations; the general case may require new techniques beyond \cite{FZI} and \cite{LamPyly}. Computational evidence supports the conjecture that the answer to \Cref{prob: laurent} is affirmative.
\end{problem}

\begin{problem}
    Investigate the relationship between homotopy-based mutations and Derksen--Weyman--Zelevinsky (DWZ) mutations of quivers with potentials~\cite{DWZ08}, aiming to identify new classes of non-degenerate potentials realizing homotopy-based mutations in the presence of $2$-cycles.
\end{problem}

In the case of acyclic quivers, we have shown that any homotopy gives rise to the Fomin--Zelevinsky mutation (\Cref{thm: any homotopy induces fz mutation acyclic}). In comparison, one finds a related phenomenon for potentials: the only potential is the zero potential, rigid and non-degenerate \cite{DWZ08}.

\begin{problem}
    Find categorical realizations of $\mathcal A(Q,H)$, for instance in the representation theory of Jacobian algebras \cite{DWZ10} whose quivers contain $2$-cycles.
\end{problem}

For $2$-acyclic quivers, categorification relies on the Caldero--Chapoton formula \cite{CC} that expresses the Laurent expansion of a cluster variable using quiver representations. Establishing an analogue in the $2$-cycle setting would be a major step towards understanding $\mathcal A(Q, H)$ and resolving \Cref{prob: laurent}.

\section*{Acknowledgment} LM thanks Alastair King and Daniel Labardini-Fragoso for helpful conversations.
FL  is supported by the National Natural Science Foundation of China (No.\,12131015) for this project.
SL is supported by the National Natural Science Foundation of China (No.\,12301048 and No.\,12471023)  and the Zhejiang Provincial Natural Science Foundation of China (No.\,LQ24A010007).

\providecommand{\bysame}{\leavevmode\hbox to3em{\hrulefill}\thinspace}
\providecommand{\MR}{\relax\ifhmode\unskip\space\fi MR }
\providecommand{\MRhref}[2]{%
  \href{http://www.ams.org/mathscinet-getitem?mr=#1}{#2}
}
\providecommand{\href}[2]{#2}

\end{document}